\documentclass[11pt,letterpaper]{amsart}

\usepackage[margin=1.15in]{geometry}
\usepackage[T1]{fontenc}
\usepackage{lmodern}
\usepackage{microtype}

\usepackage{amsmath,amssymb,amsthm,mathtools}
\numberwithin{equation}{section}
\allowdisplaybreaks

\usepackage{enumitem}
\usepackage{booktabs}
\usepackage{array}
\usepackage{longtable}
\mathtoolsset{showonlyrefs=true}

\usepackage[hidelinks]{hyperref}

\hypersetup{
  pdftitle={Critical Gaussian Multiplicative Chaos on the Circle Is Rajchman},
  pdfauthor={Yin Cai, Bonan Chen, Xiang Fang, Feng Guo},
  pdfsubject={The Rajchman property of critical Gaussian multiplicative chaos on the circle},
  pdfkeywords={critical Gaussian multiplicative chaos, Rajchman measure, Fourier coefficients, derivative martingale, Seneta--Heyde normalization, conditional Bernstein inequality},
  pdfstartview=FitH,
  pdfdisplaydoctitle=true
}

\setlist[enumerate,1]{
  label=\textup{(\roman*)},
  leftmargin=2.5em
}
\setlist[enumerate,2]{
  label=\textup{(\alph*)},
  leftmargin=2.5em
}
\setlist[itemize]{
  leftmargin=2.0em
}

\newtheorem{theorem}{Theorem}[section]
\newtheorem{proposition}[theorem]{Proposition}
\newtheorem{lemma}[theorem]{Lemma}
\newtheorem{corollary}[theorem]{Corollary}

\theoremstyle{definition}
\newtheorem{definition}[theorem]{Definition}

\theoremstyle{remark}

\newcommand{\proofheading}[1]{
  \par\vspace{0.3\baselineskip}
  \noindent\textbf{#1}\enspace
}

\title{Critical Gaussian Multiplicative Chaos on the Circle Is Rajchman}

\author[Y. Cai]{Yin Cai}
\address{School of Mathematics\\
Hangzhou Normal University\\
Hangzhou 311121\\
P. R. China}
\email{cymath@hznu.edu.cn}

\author[B. Chen]{Bonan Chen}
\address{School of Mathematical Sciences\\ 
Soochow University\\ 
Suzhou 215006\\ 
P. R. China}
\email{bnchen@suda.edu.cn}

\author[X. Fang]{Xiang Fang}
\address{Department of Applied Mathematics\\
National Yang Ming Chiao Tung University\\
Hsinchu 30010\\
Taiwan}
\email{xfang@nycu.edu.tw}

\author[F. Guo]{Feng Guo}
\address{School of Mathematics \\ 
Nanjing University of Aeronautics and Astronautics\\ 
Nanjing 210016\\ 
P. R. China}  
\email{70207994@nuaa.edu.cn}

\keywords{critical Gaussian multiplicative chaos, Rajchman measures,
Fourier coefficients, derivative martingale, Seneta--Heyde normalization,
conditional Bernstein inequality}

\subjclass[2020]{Primary 60G57; Secondary 42A16, 42A38, 60G60}

\date{}

\begin{document}

\begin{abstract}
We prove that the Fourier coefficients of the canonical critical Gaussian
multiplicative chaos on the circle vanish almost surely at infinity. More
precisely, let \(M_\phi^{\mathrm{crit}}\) be the canonical critical chaos
associated with the centered circle field \(\phi\) of covariance
\begin{equation*}
\mathbb{E}[\phi(\theta)\phi(\theta')]
=
\log\frac{1}{\lvert e^{i\theta}-e^{i\theta'}\rvert}.
\end{equation*}
Then, almost surely,
\begin{equation*}
\widehat{M_\phi^{\mathrm{crit}}}(n)\longrightarrow0
\qquad
\text{as }\lvert n\rvert\to\infty.
\end{equation*}
This resolves the almost-sure critical Rajchman problem for the canonical
circle field. Since critical chaos has Fourier dimension zero almost surely,
no positive polynomial Fourier-decay rate can hold; the theorem therefore
exhibits qualitative Fourier cancellation beyond the regime of positive
Fourier dimension.

The proof addresses two coupled difficulties: the heavy, nonuniform cell masses of critical chaos and the need to control exponentially many frequencies in each dyadic annulus. 
For an auxiliary periodized compact-range star-scale field, a derivative-rooted Bessel regression yields weighted small-cell summability and moving-tail control of exceptional large cells. 
After conditioning at a coarse scale below the Fourier scale, finite-range independence and conditional Bernstein concentration reduce uniform control of the terminal Fourier coefficients over each dyadic annulus to a spatial-variation estimate for a coarse predictable measure.
A smooth positive-definite covariance correction and critical-chaos uniqueness then transfer the Rajchman property to the canonical critical chaos of the exact circle field.
\end{abstract}
\maketitle

\section{Introduction and main result}
\label{sec:introduction}

\subsection{The critical Fourier problem and the main theorem}
\label{subsec:critical-rajchman-problem}

Critical Gaussian multiplicative chaos is a canonical random singular measure at the boundary of nondegeneracy.
In one dimension it is carried by sets of Hausdorff dimension zero \cite{Powell2021}, and hence has Fourier dimension zero. 
Thus, no positive polynomial Fourier-decay exponent can hold.
This leaves a qualitative pathwise question: do the individual Fourier coefficients of the critical measure nevertheless vanish at infinity?
We answer this question affirmatively for the canonical critical chaos associated with the exact log-correlated field on the circle.

Gaussian multiplicative chaos, introduced by Kahane \cite{Kahane1985}, constructs random measures from logarithmically correlated Gaussian fields;
see \cite{Powell2021,RhodesVargas2014} for background.
In one dimension, the usual renormalized exponentials converge to a nontrivial measure for parameters \(0<\gamma<\sqrt{2}\).
At the critical value \(\gamma=\sqrt{2}\), this normalization converges to zero,
and a nontrivial measure is recovered either through derivative normalization or through the Seneta--Heyde factor \(\sqrt{\log(1/\varepsilon)}\);
see \cite{DRSVDerivative,DRSVRenormalization,Powell2021}.

We work with the standard exact circle model.
Write
\[
\mathbb{T}
=
\mathbb{R}/(2\pi\mathbb{Z}),
\]
and denote its geodesic distance by \(d_{\mathbb{T}}\). 
Let \(\phi\) be the centered generalized Gaussian field on \(\mathbb{T}\) with covariance
\begin{equation}
\label{eq:exact-circle-covariance}
\mathbb{E}[\phi(\theta)\phi(\theta')]
=
\log\frac{1}{\lvert e^{i\theta}-e^{i\theta'}\rvert}.
\end{equation}
Let \(\rho\in C_c^\infty(\mathbb{R})\) be nonnegative with integral one, write \(\rho_\varepsilon\) for its periodized rescaling,
and set \(\phi_\varepsilon=\rho_\varepsilon*\phi\).

\begin{definition}
\label{def:critical-circle-chaos}
The canonical critical Gaussian multiplicative chaos associated with \eqref{eq:exact-circle-covariance} is the random finite measure \(M_\phi^{\mathrm{crit}}\) characterized by
\begin{equation}
\label{eq:critical-circle-chaos-normalization}
\sqrt{\log(1/\varepsilon)}
\exp\left(
\sqrt{2}\phi_\varepsilon(\theta)
-
\mathbb{E}[\phi_\varepsilon(\theta)^2]
\right)\mathrm{d}\theta
\longrightarrow
M_\phi^{\mathrm{crit}}(\mathrm{d}\theta)
\end{equation}
in probability in the weak topology as \(\varepsilon\downarrow0\).
The limit is independent of the admissible mollifier.
\end{definition}

The existence, nontriviality, mollifier independence, and atomlessness in Definition~\ref{def:critical-circle-chaos} follow from the general critical chaos theory;
see \cite[Theorem~5.3]{JunnilaSaksmanWebb2019} for the first three properties and \cite[Section~2.6]{Powell2021} for atomlessness.
The normalization in \eqref{eq:critical-circle-chaos-normalization} is fixed throughout the paper.

For a finite Borel measure \(\mu\) on \(\mathbb{T}\), define
\[
\widehat{\mu}(n)
=
\int_{\mathbb{T}}e^{in\theta}\,\mu(\mathrm{d}\theta),
\qquad
n\in\mathbb{Z}.
\]

\begin{definition}
\label{def:rajchman-measure}
A finite Borel measure \(\mu\) on \(\mathbb{T}\) is called \emph{Rajchman} if
\[
\widehat{\mu}(n)
\longrightarrow0
\qquad
\text{as }\lvert n\rvert\to\infty.
\]
\end{definition}

For absolutely continuous measures, the Rajchman property follows from the Riemann--Lebesgue lemma.
For singular measures it is an additional cancellation property:
atomlessness alone gives only an averaged Fourier conclusion and does not imply pointwise decay of the coefficients.
In particular, the fact that critical chaos is carried by sets of
Hausdorff dimension zero does not decide the Rajchman question.

Our main result is the following.

\begin{theorem}[Critical circle GMC is Rajchman]
\label{thm:main-circle-rajchman}
Let \(M_\phi^{\mathrm{crit}}\) be the canonical critical chaos associated with \eqref{eq:exact-circle-covariance}.
Then, almost surely,
\begin{equation}
\label{eq:main-circle-rajchman}
\lim_{\lvert n\rvert\to\infty}
\widehat{M_\phi^{\mathrm{crit}}}(n)
=
0.
\end{equation}
\end{theorem}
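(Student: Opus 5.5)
We follow the route sketched in the abstract: an auxiliary field with good independence, a multiscale Fourier estimate for it, and then a transfer to the exact circle field. First we replace \(\phi\) by a periodized star-scale field \(X=\sum_k X_k\). Each scale \(X_k\) has compact correlation range \(\asymp 2^{-k}\) and variance \(\log 2\), and
\[
\mathbb{E}[X(\theta)X(\theta')]=\log\frac{1}{\lvert e^{i\theta}-e^{i\theta'}\rvert}-g(\theta,\theta'),
\]
with \(g\) smooth and positive definite. Hence \(\phi \overset{d}{=} X + Y\), where \(Y\) is an independent smooth Gaussian field. Critical-chaos uniqueness (mollifier independence as in Definition~\ref{def:critical-circle-chaos}, together with the derivative/Seneta--Heyde equivalence) gives \(M^{\mathrm{crit}}_\phi \overset{d}{=} e^{\sqrt2 Y-\mathbb{E}Y^2}\,M^{\mathrm{crit}}_X\). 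Multiplying a Rajchman measure by a \(C^1\) density preserves the Rajchman property: approximate the density by trigonometric polynomials in sup norm, and each term shifts \(\widehat{\mu}(n)\) by a bounded frequency. So it suffices to prove the result for \(M:=M_X^{\mathrm{crit}}\) built from the star-scale field.

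For \(M\) we fix a dyadic annulus \(2^N\le \lvert n\rvert<2^{N+1}\) and a coarse scale \(m=N-L\), where \(L=L(N)\to\infty\) slowly. Let \(\mathcal F_m\) be generated by \(X_1,\dots,X_m\). Partition \(\mathbb T\) into cells \(I\) of length \(2^{-m}\) and write
\[
\widehat M(n)=\sum_I \int_I e^{in\theta}M(d\theta).
\]
Conditionally on \(\mathcal F_m\), the cell contributions are independent at distance \(\ge 2\) cells by finite range. We split \(\widehat M(n)=\mathbb{E}[\widehat M(n)\mid\mathcal F_m]+(\widehat M(n)-\mathbb{E}[\cdot\mid\mathcal F_m])\).

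The conditional mean equals \(\int e^{in\theta}\,e^{\sqrt2 X_{\le m}-m\log 2}\,\mathbb{E}[\text{fine chaos}]\,d\theta\). This is the Fourier coefficient at frequency \(\sim 2^N\) of a coarse predictable density that varies on scale \(2^{-m}\gg 2^{-N}\). Integrating by parts once within each cell, or simply using its spatial variation, bounds it by \(2^{-L}\) times the total variation of the coarse measure. That total variation is controlled by the critical derivative martingale at level \(m\), which is a.s. finite up to \(\sqrt m\) factors, so the term tends to \(0\) if \(L\gg \log m\).

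For the fluctuation term we apply the conditional Bernstein inequality to the even and odd cell sums separately. The variance proxy is \(\sum_I M(I)^2\) and the range is \(\max_I M(I)\). We need a union bound over the \(2^N\) frequencies in the annulus, so we need
\[
\max_I M(I)\cdot N\to0 \quad\text{and}\quad \Big(\sum_I M(I)^2\Big)\,N\to0 \quad\text{a.s.}
\]
This is the main obstacle. Critical cells have masses of order \(2^{-m}\sqrt m\,e^{\sqrt2 X_{\le m}(I)-m\log2}\) times the fine derivative-martingale factor, and the latter is heavy-tailed with tail \(\asymp 1/x\).

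We would handle this by truncation. Call a cell exceptional if the coarse field is within \(O(\log m)\) of the maximal line \(\sqrt2\,m\log 2\), or if its fine factor exceeds a moving threshold. Non-exceptional cells contribute to \(\sum M(I)^2\) at most \(e^{-c\sqrt{m}}\)-type small amounts. This is the weighted small-cell summability, obtained by conditioning on the derivative martingale root and comparing with a Bessel-3 process (the ballot estimate for the barrier-conditioned walk). Exceptional cells have total mass \(\to0\) a.s. because the maximum of the field stays below \(\sqrt2 m\log2-\tfrac{3}{2\sqrt2}\log m\) plus tight fluctuations, and because of moving-tail Borel--Cantelli bounds on their count. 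Their contribution to \(\widehat M(n)\) is therefore bounded trivially by their mass, uniformly in \(n\).

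Choosing \(m=N-L\) with \(N\) versus \(m\) comparable, the bounds \(\max_I M(I)\lesssim m^{-A}\) on good cells and the sum bound give Bernstein tails \(\exp(-cN^{1+\delta})\). These are summable against \(2^N\) frequencies and over \(N\). Borel--Cantelli then yields \(\sup_{2^N\le\lvert n\rvert<2^{N+1}}\lvert\widehat M(n)\rvert\to0\) a.s., completing the proof.
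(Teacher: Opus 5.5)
\textbf{There is a genuine gap: the conditional mean you center by does not exist.} You split $\widehat M(n)$ into $\mathbb{E}[\widehat M(n)\mid\mathcal F_m]$ plus a fluctuation, and identify the first term with a coarse density times $\mathbb{E}[\text{fine chaos}]$. But, as you note yourself, the fine derivative-martingale factor has tail of order $1/x$. Its mean is therefore infinite, and $\mathbb{E}[M(I)\mid\mathcal F_m]=\infty$ for every cell, so neither the conditional mean nor the centered fluctuation is defined.

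You also cannot use the level-$m$ derivative martingale instead. $(M_t')_{t\ge0}$ is a signed martingale that is not uniformly integrable: $\mathbb{E}[M_t'(\mathbb T)]=0$ for every $t$, while $M_*'(\mathbb T)>0$ almost surely. Hence $M_m'\neq\mathbb{E}[M_*'\mid\mathcal F_m]$.

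The missing idea is the paper's barrier truncation:
\begin{itemize}
\item The killed positive martingale $Z_t^\beta$, with density $R_t^\beta(\theta)\mathbf 1_{\{\tau_\theta^\beta>t\}}e^{\sqrt2X_t(\theta)-t}$, is uniformly integrable, and its terminal measure satisfies $\mathbb{E}[Z^\beta\mid\mathcal F_t]=Z_t^\beta$.
\item $Z^\beta=M_*'$ on $E_\beta=\{S<\beta\}$.
\item On $E_\beta$ the killing indicator disappears, so the predictable measure has the smooth density $f_t^\beta$. Its $L^1$-variation is bounded using the independence of $X_t(\theta)$ and $\partial_\theta X_t(\theta)$.
\item All annular estimates are proved for fixed integers $\beta$, and only afterwards is $j>S$ selected.
\end{itemize}
The same truncation is what makes the rooted law $\Theta^\beta$ a probability measure with a $\operatorname{BES}(3)$ spine. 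Your nonconcentration step (rooting at the derivative martingale) presupposes this too, since rooting at an infinite-mean measure does not give a probability measure.

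\textbf{The fluctuation step has a matching gap.} Conditional Bernstein given $\mathcal F_m$ needs two inputs:
\begin{itemize}
\item an almost-sure bound on each summand by a deterministic or $\mathcal F_m$-measurable constant;
\item an $\mathcal F_m$-measurable bound on the sum of conditional variances.
\end{itemize}
The realized quantities $\max_I M(I)$ and $\sum_I M(I)^2$ cannot play these roles. You must cap each cell at a deterministic level before centering. The paper caps at $b/N$ with $b=b(\eta,K)$ small, bounds the conditional variance by $(b/N)Z_{t_N}^\beta(\mathbb T)$, and works on $\{Z_{t_N}^\beta(\mathbb T)\le K\}\in\mathcal F_{t_N}$.

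Centering the capped variables then produces two error terms. One is the realized mass of discarded cells. The other is the predictable tail $\sum_I\mathbb{E}[Z^\beta(I)\mathbf 1_{\{Z^\beta(I)>b/N\}}\mid\mathcal F_{t_N}]$. Your proposal never controls the predictable tail, and it is again meaningful only for the uniformly integrable $Z^\beta$. The paper handles it with Doob's maximal inequality along the increasing filtration $(\mathcal F_{t_N})$.

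Your justification for the vanishing of the realized exceptional mass is also not almost sure. Tightness of the recentered maximum is a statement in law. The almost-sure vanishing must come from the rooted analysis, namely small-cell summability with $p=1$.

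\textbf{Two points in the transfer need repair.}
\begin{itemize}
\item $g$ cannot be positive definite. $K_{\mathrm{cir}}$ has zero constant Fourier mode, whereas the star-scale covariance has constant mode $a_0>0$. You must compare with $\phi+G_0$ and then divide by the scalar $e^{\sqrt2G_0-a_0}$.
\item Identifying the star-scale limit with the mollifier chaos needs a comparison theorem for approximations with uniformly close covariances, as in Junnila--Saksman. Mollifier independence alone is not enough, because $X_t+Y$ is not a convolution regularization of $X_*+Y$.
\end{itemize}
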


\subsection{Previous work}
\label{subsec:main-theorem-previous-work}

For \(0\leq\gamma<\sqrt{2}\), let \(M_\gamma\) denote the canonical subcritical chaos associated with the exact circle field.
Garban and Vargas initiated the systematic study of the Fourier coefficients of one-dimensional Gaussian multiplicative chaos.
They proved that \(M_\gamma\) is almost surely Rajchman throughout the subcritical range; see \cite[Theorem~1.1]{GarbanVargas2026}.
Their proof is based on finite convolutions.
If \(d\geq2\) and
\[
\gamma<\sqrt{\frac{2(d-1)}{d}},
\]
then the \(d\)-fold convolution of \(M_\gamma\) is almost surely an \(L^1\)-function.
Its Fourier coefficients are \(\widehat{M_\gamma}(n)^d\), so the Riemann--Lebesgue lemma gives the Rajchman property.
For every fixed subcritical \(\gamma\), one can choose a finite \(d\), but the required convolution order diverges as \(\gamma\uparrow\sqrt{2}\).
Garban and Vargas therefore left the critical Rajchman property as an open problem and observed that their method would formally require an ill-defined infinite convolution;
see \cite[Section~1.2.2]{GarbanVargas2026}.

Arguin and Hamdan subsequently established a quantitative critical result for a different model.
Let \(\mu\) be the critical chaos associated with a compact-range star-scale invariant field on the unit interval,
and let \(c_n\) denote its \(n\)-th Fourier coefficient.
They proved that, for every \(\alpha<1/4\),
\begin{equation}
\label{eq:Arguin-Hamdan-comparison}
(\log n)^\alpha c_n
\longrightarrow 0
\qquad
\text{in probability as \(n\to\infty\)};
\end{equation}
see \cite[Theorem~1.1]{ArguinHamdan2025}.
In particular, \(c_n\to0\) in probability; more quantitatively, for every \(0<\alpha<1/4\), \(\lvert c_n\rvert=o_{\mathbb{P}}((\log n)^{-\alpha})\).

Theorem~\ref{thm:main-circle-rajchman} and \eqref{eq:Arguin-Hamdan-comparison} address different levels of the critical problem.
Arguin and Hamdan work with a star-scale invariant field on an interval,
whereas our theorem concerns the canonical critical chaos of the exact circle covariance \eqref{eq:exact-circle-covariance}.
Their result gives polylogarithmic decay in probability of every order strictly below \(1/4\),
whereas ours is qualitative but holds almost surely along the full positive and negative integer sequence.
Neither result follows formally from the other:
their convergence in probability does not provide a single probability-one event on which the full sequence converges, nor does it identify the exact circle chaos,
while qualitative almost-sure convergence does not imply any of the quantitative bounds in \eqref{eq:Arguin-Hamdan-comparison}.

Theorem~\ref{thm:main-circle-rajchman} therefore resolves the almost-sure critical Rajchman problem for the canonical circle field posed in \cite{GarbanVargas2026}.
To the best of our knowledge, no almost-sure full-sequence Rajchman theorem for critical real Gaussian multiplicative chaos was previously available.

\subsection{Why criticality requires a different mechanism}
\label{subsec:critical-difficulties}

First, the finite-convolution argument used in the subcritical regime has no critical endpoint.
For a fixed subcritical parameter, sufficiently many convolutions regularize the chaos measure until the Riemann--Lebesgue lemma applies,
but no finite convolution order is available at \(\gamma=\sqrt{2}\).
Theorem~\ref{thm:main-circle-rajchman} is therefore not obtained by passing to the endpoint in the subcritical proof.
The replacement for convolutional smoothing is a derivative-rooted nonconcentration estimate:
under the critical rooted law, the mass of successively smaller cells around a typical mass point satisfies a summability property strong enough to control the exceptional large cells that obstruct direct Fourier estimates.

Second, an almost-sure full-sequence statement requires substantially more than control of a single Fourier coefficient.
At the critical parameter, cell masses are heavy and highly nonuniform, so a small number of atypical cells can dominate estimates at the Fourier scale.
At the same time, a dyadic frequency annulus contains exponentially many integers.
Pointwise convergence in probability does not provide a failure probability that can be summed over all frequencies and scales.
We address this by combining rooted nonconcentration with conditioning at one coarse scale below the annulus.
Finite-range independence of the remaining increments and conditional Bernstein concentration then give a comparison with a coarse predictable measure that is uniform over the whole annulus.

Third, the finite-range structure needed for the annular argument is available in a periodized compact-range star-scale field,
whereas Theorem~\ref{thm:main-circle-rajchman} concerns the canonical critical chaos associated with the exact covariance \eqref{eq:exact-circle-covariance}.
Matching the limiting covariance distributions is not sufficient:
equality in law of the underlying generalized Gaussian fields does not by itself identify their critical chaos measures.
We resolve this by adding an independent smooth Gaussian field whose covariance supplies the missing positive-definite correction,
and then comparing the scale and mollifier approximations at the covariance level.
Critical-chaos uniqueness identifies the resulting measures on the required coupling and transfers the Rajchman property to the canonical circle chaos.

\subsection{Main probabilistic ideas}
\label{subsec:proof-mechanism}

The proof is first carried out for a periodized compact-range star-scale field \((X_t)_{t\geq0}\).
This auxiliary model has two features that are used separately.
On every sufficiently short arc, its complete space--time law agrees with the standard compact-range star-scale field on the line,
so the local critical-chaos construction and its rooted description apply without approximation.
On the other hand, the field added after time \(t\) has dependence range \(e^{-t}\),
which provides the conditional independence needed to control an entire Fourier annulus at once.
We write \(Z_t^\beta\) for the positive truncated critical martingale and \(Z^\beta\) for its terminal measure.

\proofheading{Derivative-rooted nonconcentration.}
The structure of this step is motivated by the spine-based endpoint mechanism developed for the Rajchman problem for Mandelbrot cascades \cite{CaiChengFangLiQuXiao2026Rajchman}:
sampling according to mass reduces the control of exceptional large cells to the behavior of a distinguished one-dimensional path.
In the present Gaussian setting, this principle is implemented through the derivative-rooted barrier process.

Under the probability measure obtained by rooting with \(Z^\beta\), the root is uniform on the circle and the process
\[
\beta+\sqrt{2}t-X_t(\theta)
\]
is a three-dimensional Bessel process.
An exact Gaussian regression of the field at a second point against the complete rooted path separates a uniformly controlled Gaussian remainder from a negative displacement generated by the Bessel spine.
Integrating the resulting conditional density estimate gives the principal nonconcentration statement:
if \(Q_m(\theta)\) is the dyadic cell of level \(m\) containing the root, then, for every fixed \(p\geq0\), almost surely,
\begin{equation}
\label{eq:introduction-rooted-summability}
\sum_{m=1}^{\infty}
m^p Z^\beta\bigl(Q_m(\theta)\bigr)
<\infty
\qquad
\text{for }Z^\beta\text{-almost every }\theta.
\end{equation}
This weighted small-cell summability is stronger than atomlessness: it controls the rate at which mass may accumulate around a typical mass point.
Theorem~\ref{thm:rooted-small-cell-summability} and Proposition~\ref{prop:moving-cell-tails} then show that,
for partitions at a level asymptotic to the Fourier scale,
both the realized mass of cells larger than \(b/N\) and its conditional expectation with respect to the coarse field vanish.
These moving-tail estimates remove the exceptional large cells from the subsequent annular concentration argument.

\proofheading{One-scale annular concentration.}
Let \(\Lambda_N=\{n\in\mathbb{Z}:2^N\leq\lvert n\rvert<2^{N+1}\}\), and condition at a coarse time \(t_N=r_N\log 2\), where \(r_N=N-O(\log N)\).
The cells at level \(r_N\) split into two alternating classes such that, within either class,
their terminal restrictions are conditionally independent given the coarse field.
After capping each cell at mass \(b/N\) and subtracting its conditional mean,
conditional Bernstein concentration gives a failure probability that remains summable after a union bound over all frequencies in \(\Lambda_N\).
The discarded part is negligible by the realized and predictable moving-tail estimates.
The resulting comparison is
\begin{equation}
\label{eq:introduction-annular-approximation}
\max_{n\in\Lambda_N}
\left|
\widehat{Z^\beta}(n)-\widehat{Z^\beta_{t_N}}(n)
\right|
\longrightarrow0
\qquad
\text{almost surely.}
\end{equation}
The measure-valued locality theorem and the adapted localization used in Section~\ref{sec:terminal-locality-annular} ensure that the conditional factorization
and concentration are applied with the correct sigma-algebras; they are safeguards for this step rather than additional sources of cancellation.

\proofheading{Decay of the coarse predictable measure.}
On the global barrier event, the killing indicator in \(Z_t^\beta\) disappears and the coarse predictable measure has a smooth density.
Stationarity and evenness of the covariance imply that the field value and its spatial derivative at a point are independent Gaussian variables.
After the exponential change of measure, this gives a direct bound on the expected total variation of the density.
A Borel--Cantelli argument at integer scale times, followed by periodic integration by parts,
therefore controls the Fourier coefficients of \(Z^\beta_{t_N}\) uniformly on \(\Lambda_N\).
The logarithmic separation between \(r_N\) and \(N\) leaves a polynomial margin;
Proposition~\ref{prop:predictable-annular-decay} gives an \(N^{-3}\) bound for this coarse measure for all sufficiently large \(N\).
Together with \eqref{eq:introduction-annular-approximation},
this proves the Rajchman property of the periodized derivative limit in Theorem~\ref{thm:periodized-derivative-rajchman}.
Proposition~\ref{prop:periodized-seneta-heyde} then identifies the positive critical scale measure as the deterministic multiple \(\sqrt{2/\pi}\,M'_\ast\).

\proofheading{Transfer to the canonical circle field.}
It remains to pass from the auxiliary scale decomposition to the exact circle covariance \eqref{eq:exact-circle-covariance}.
The Fourier coefficients of the limiting star-scale covariance can be computed explicitly.
Their deficit from the exact circle covariance is nonnegative and rapidly decreasing, hence is the covariance spectrum of an independent smooth stationary Gaussian field \(Y\);
see Lemma~\ref{lem:smooth-covariance-correction}.
If \(X_\ast\) is the limiting star-scale field and \(G_0\) is an independent Gaussian constant,
Proposition~\ref{prop:corrected-field-law} identifies \(X_\ast+Y\) in law with \(\phi+G_0\).
Multiplication by the smooth positive density generated by \(Y\) preserves the Rajchman property by Lemma~\ref{lem:rajchman-multiplier}.

Equality in law of the limiting fields does not by itself identify their critical chaos measures.
We therefore compare the scale approximations \(X_t+Y\) with the convolution regularizations of \(X_\ast+Y\).
Their covariances remain uniformly close and converge to one another away from the diagonal.
Critical-chaos uniqueness first identifies the limiting law and then gives convergence in probability to the same measure on the original coupling. 
This yields the canonical identification in Proposition~\ref{prop:canonical-corrected-chaos}. 
Finally, a constant-mode identity shows that the Gaussian constant mode contributes only an almost surely positive scalar factor.
Removing this factor transfers the Rajchman property to \(M_\phi^{\mathrm{crit}}\) and completes the proof of Theorem~\ref{thm:main-circle-rajchman}.

\subsection{Scope and organization}
\label{subsec:introduction-scope}

Theorem~\ref{thm:main-circle-rajchman} is qualitative. 
The polynomial bound in Proposition~\ref{prop:predictable-annular-decay} applies only to the coarse predictable measure used in the proof
and does not give a decay rate for the terminal coefficients \(\widehat{M_\phi^{\mathrm{crit}}}(n)\). 
Determining their almost-sure asymptotic order remains open. 
The compact-range star-scale field is likewise an auxiliary device:
it provides the finite-range dependence needed for the annular concentration argument,
whereas the theorem concerns the canonical mollifier-independent critical chaos associated with the exact circle covariance \eqref{eq:exact-circle-covariance}.
The covariance correction and critical-chaos uniqueness argument are therefore part of the identification of the measure in the theorem,
rather than a formal change of model.

Section~\ref{sec:periodized-field} constructs the periodized compact-range field,  its positive truncated critical martingales, the derivative limit, and the rooted law. 
Section~\ref{sec:rooted-small-cells} proves the rooted regression estimate, polynomial small-cell summability, and the realized and predictable moving cell tails. 
Section~\ref{sec:terminal-locality-annular} establishes terminal locality, conditional independence of separated cell restrictions,
and the uniform one-scale annular approximation. 
Section~\ref{sec:periodized-rajchman} controls the variation of the coarse predictable measure, proves the Rajchman property for the periodized derivative measure,
and identifies the corresponding positive critical scale measure.
Finally, Section~\ref{sec:exact-circle-transfer} constructs the smooth positive-definite covariance correction,
compares the scale and mollifier approximations, removes the constant Gaussian mode, and proves Theorem~\ref{thm:main-circle-rajchman}.

\section{The periodized compact-range field}
\label{sec:periodized-field}

We first work with a periodized compact-range scale decomposition.  
This model has two features that will be used separately.  
On every sufficiently short arc, its complete space-time law agrees exactly with the real-line compact-range field of Duplantier--Rhodes--Sheffield--Vargas.  
On the other hand, the field added after time \(t\) has spatial dependence range \(e^{-t}\).  
The first property gives the critical construction in this section,  while the second will later produce conditional independence between separated cells.

\subsection{The kernel and the scale field}
\label{subsec:kernel-scale-field}

We use the real-line Fourier transform
\[
\widehat{k}(\xi)
=
\int_{\mathbb{R}}k(x)e^{-i\xi x}\,\mathrm{d}x.
\]

\begin{definition}
\label{def:admissible-compact-range-kernel}
An admissible compact-range kernel is an even function \(k\in C_c^\infty(\mathbb{R})\) satisfying
\[
k(0)=1,
\qquad
\widehat{k}(\xi)\geq0,
\qquad
xk'(x)\leq0,
\qquad
\operatorname{supp}k\subset[-1,1].
\]
\end{definition}

Such kernels exist.
For example, let \(\varphi\in C_c^\infty(\mathbb{R})\) be nonnegative, even, nonconstant, supported in \([-1/4,1/4]\), and nonincreasing on \([0,\infty)\).  
Then, the normalized autocorrelation
\[
k(x)
=
\frac{(\varphi*\widetilde{\varphi})(x)}
     {(\varphi*\widetilde{\varphi})(0)},
\qquad
\widetilde{\varphi}(x)=\varphi(-x),
\]
is admissible.
We fix one such kernel throughout the paper.

For \(u\geq1\), define the periodization
\begin{equation}
\label{eq:periodized-kernel-definition}
q_u(\theta)
=
\sum_{\ell\in\mathbb{Z}}
k\bigl(u(\theta+2\pi\ell)\bigr),
\qquad
\theta\in\mathbb{T}.
\end{equation}

\begin{lemma}
\label{lem:periodized-positive-definite-local-lifting}
For every \(u\geq1\), the function \(q_u\) is smooth and positive definite on \(\mathbb{T}\), and
\[
q_u(0)=1.
\]

Let \(J\subset\mathbb{T}\) be an arc of geodesic diameter less than \(1/2\), and let \(\widetilde{J}\subset\mathbb{R}\) be a lift of \(J\).
If \(\theta,\theta'\in J\) have lifts \(x,x'\in\widetilde{J}\), then
\begin{equation}
\label{eq:exact-local-lifting-kernel}
q_u(\theta-\theta')
=
k\bigl(u(x-x')\bigr).
\end{equation}
\end{lemma}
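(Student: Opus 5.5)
The plan is to treat the three assertions separately, using only the compact support of \(k\), its evenness, \(k(0)=1\), and \(\widehat{k}\geq0\).

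\emph{Smoothness.} Since \(\operatorname{supp}k\subset[-1,1]\) and \(u\geq1\), the function \(x\mapsto k(u(x+2\pi\ell))\) vanishes unless \(\lvert x+2\pi\ell\rvert\leq 1/u\leq1\). Hence, for \(x\) in any bounded interval, only finitely many (in fact at most one or two) terms of \eqref{eq:periodized-kernel-definition} are nonzero. The sum is therefore locally finite, and \(q_u\) is a \(2\pi\)-periodic smooth function, i.e.\ smooth on \(\mathbb{T}\).

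\emph{Positive definiteness.} I would compute the Fourier coefficients by unfolding the periodization:
\[
\int_{\mathbb{T}}q_u(\theta)e^{-in\theta}\,\mathrm{d}\theta
=
\int_{\mathbb{R}}k(ux)e^{-inx}\,\mathrm{d}x
=
\frac1u\,\widehat{k}(n/u)\geq0 .
\]
Since \(q_u\) is smooth, these coefficients decay rapidly, so the Fourier series converges absolutely and \(q_u(\theta)=\frac{1}{2\pi u}\sum_n\widehat{k}(n/u)e^{in\theta}\). Then, for any points \(\theta_j\) and complex numbers \(c_j\),
\[
\sum_{j,l}c_j\overline{c_l}\,q_u(\theta_j-\theta_l)
=
\frac{1}{2\pi u}\sum_n\widehat{k}(n/u)\Bigl\lvert\sum_j c_je^{in\theta_j}\Bigr\rvert^2\geq0 .
\]
This is Bochner/Herglotz on the circle, and it gives positive definiteness.

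\emph{Normalization.} At \(\theta=0\), the term with \(\ell\neq0\) has argument \(2\pi u\lvert\ell\rvert\geq2\pi>1\), so it vanishes, and \(q_u(0)=k(0)=1\).

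\emph{Local lifting.} Let \(\theta,\theta'\in J\) have lifts \(x,x'\in\widetilde{J}\). Since \(\widetilde{J}\) is a lift of an arc of diameter less than \(1/2\), we have \(\lvert x-x'\rvert<1/2\). The difference \(x-x'\) is a representative of \(\theta-\theta'\), so
\[
q_u(\theta-\theta')=\sum_\ell k\bigl(u(x-x'+2\pi\ell)\bigr).
\]
For \(\ell\neq0\), \(\lvert x-x'+2\pi\ell\rvert\geq2\pi-1/2>1\geq1/u\), so those terms vanish. Only \(\ell=0\) survives, which gives \eqref{eq:exact-local-lifting-kernel}.

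The only step needing care is the justification of positive definiteness, specifically the Poisson-summation identity for the Fourier coefficients. This is routine because \(k\in C_c^\infty\), which makes the unfolding legitimate and gives absolute convergence. I do not expect a genuine obstacle.
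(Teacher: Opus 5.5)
Your proposal is correct and follows the paper's proof essentially step for step. You use local finiteness of the periodization for smoothness, and unfolding to obtain the Fourier coefficients \(\frac{1}{2\pi u}\widehat{k}(n/u)\geq0\) together with Bochner--Herglotz for positive definiteness. You use the bound \(|x-x'+2\pi\ell|\geq2\pi-1/2>1\) (and \(2\pi u|\ell|>1\) at the origin) to kill every \(\ell\neq0\) term in both the normalization and the local lifting identity. Your explicit quadratic-form computation via the absolutely convergent Fourier series simply spells out the direction of Herglotz's theorem that the paper cites.
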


\begin{proof}
Since \(k\) is smooth and compactly supported, the periodization in \eqref{eq:periodized-kernel-definition} is locally finite, and hence \(q_u\) is smooth. 
Moreover, for every \(n\in\mathbb{Z}\), 
\begin{equation}
\label{eq:periodized-kernel-fourier-coefficient}
\begin{aligned}
\frac{1}{2\pi}
\int_0^{2\pi}
q_u(\theta)e^{-in\theta}\,\mathrm{d}\theta
=
\frac{1}{2\pi}
\sum_{\ell\in\mathbb{Z}}
\int_0^{2\pi}
k\bigl(u(\theta+2\pi\ell)\bigr)
e^{-in\theta}\,\mathrm{d}\theta
=
\frac{1}{2\pi u}
\widehat{k}\left(\frac{n}{u}\right)
\geq0,
\end{aligned}
\end{equation}
which implies that \(q_u\) is positive definite by the Bochner--Herglotz theorem on \(\mathbb{T}\).
At the origin, all periodization terms except \(\ell=0\) vanish, and therefore,
\[
q_u(0)=k(0)=1.
\]

For the local identity, \(|x-x'|<1/2\). 
If \(\ell\neq0\), then
\[
|x-x'+2\pi\ell|
\geq
2\pi-\frac{1}{2}
>
1.
\]
Since \(u\geq1\) and \(k\) is supported in \([-1,1]\), every term with \(\ell\neq0\) in \eqref{eq:periodized-kernel-definition} vanishes.
Thus, only the \(\ell=0\) term remains, which yields \eqref{eq:exact-local-lifting-kernel}.
\end{proof}

For \(t\geq0\), put
\begin{equation}
\label{eq:periodized-covariance-Kt}
K_t(h)
=
\int_1^{e^t}
q_u(h)\,\frac{\mathrm{d}u}{u},
\qquad
h\in\mathbb{T}.
\end{equation}
Let \((X_t)_{t\geq0}\) be the centered Gaussian field with independent increments in the scale variable and covariance
\begin{equation}
\label{eq:periodized-scale-covariance}
\mathbb{E}\left[
X_t(\theta)X_s(\theta')
\right]
=
K_{t\wedge s}(\theta-\theta')
=
\int_1^{e^{t\wedge s}}
q_u(\theta-\theta')\,\frac{\mathrm{d}u}{u}.
\end{equation}
The Gaussian construction follows from Lemma~\ref{lem:periodized-positive-definite-local-lifting}. 
Since \(q_u(0)=1\),
\begin{equation}
\label{eq:periodized-pointwise-variance}
\mathbb{E}[X_t(\theta)^2]=t.
\end{equation}
In particular, for every fixed \(\theta\), the process \((X_t(\theta))_{t\geq0}\) is a standard Brownian motion.

For every finite \(T\), the covariance on \([0,T]\times\mathbb{T}\) is smooth in the spatial variables. 
Standard Gaussian regularity gives a version that is jointly continuous in \((t,\theta)\), and for which \(\theta\mapsto X_t(\theta)\) is smooth at every finite time. 
We use this version throughout.

If \(J\) and \(\widetilde{J}\) are as in Lemma~\ref{lem:periodized-positive-definite-local-lifting}, then
\begin{equation}
\label{eq:exact-local-lifting-covariance}
\mathbb{E}\left[
X_t(\theta)X_s(\theta')
\right]
=
\int_1^{e^{t\wedge s}}
k\bigl(u(x-x')\bigr)\,\frac{\mathrm{d}u}{u}.
\end{equation}
Thus, the complete space-time process restricted to \(J\), after lifting, has exactly the compact-range star-scale law used in \cite{DRSVDerivative}. 
Every application of the real-line critical theory below is made on such a fixed lifted arc; no periodic extension of the cited theorems is assumed.

\subsection{Positive truncations and the rooted law}
\label{subsec:positive-truncations-rooted-law}

Let \((\mathcal{F}_t)_{t\geq0}\) denote the completed natural filtration of the field. 
For \(\beta>0\), define
\[
R_t^\beta(\theta)
=
\beta+\sqrt{2}\,t-X_t(\theta),
\]
and
\[
\tau_\theta^\beta
=
\inf\left\{
s>0:
X_s(\theta)-\sqrt{2}\,s>\beta
\right\},
\]
with the convention \(\inf\varnothing=\infty\). 
Put
\begin{equation}
\label{eq:positive-truncated-density}
F_t^\beta(\theta)
=
R_t^\beta(\theta)
\mathbf{1}_{\{\tau_\theta^\beta>t\}}
e^{\sqrt{2}X_t(\theta)-t},
\end{equation}
and define the positive random measure
\begin{equation}
\label{eq:positive-truncated-measure}
Z_t^\beta(\mathrm{d}\theta)
=
F_t^\beta(\theta)\,\mathrm{d}\theta.
\end{equation}
Here and below, \(|A|\) denotes Lebesgue arc length on \(\mathbb{T}\), so that \(|\mathbb{T}|=2\pi\).

\begin{proposition}
\label{prop:periodized-positive-rooted-package}
Fix \(\beta>0\). 
Then, the following assertions hold.

\begin{enumerate}
\item[(i)] There exists a finite positive random measure \(Z^\beta\) on \(\mathbb{T}\) such that
\begin{equation}
\label{eq:positive-truncation-weak-limit}
Z_t^\beta
\xrightarrow[t\to\infty]{\mathrm{w}}
Z^\beta
\qquad
\text{almost surely}.
\end{equation}

\item[(ii)] For every bounded Borel function \(f\colon\mathbb{T}\to\mathbb{C}\), write
\[
Z_t^\beta(f)
=
\int_{\mathbb{T}}
f(\theta)\,Z_t^\beta(\mathrm{d}\theta),
\qquad
Z^\beta(f)
=
\int_{\mathbb{T}}
f(\theta)\,Z^\beta(\mathrm{d}\theta).
\]
Then,
\begin{equation}
\label{eq:positive-truncation-first-moment}
\mathbb{E}\bigl[Z^\beta(f)\bigr]
=
\beta
\int_{\mathbb{T}}
f(\theta)\,\mathrm{d}\theta,
\end{equation}
and
\begin{equation}
\label{eq:positive-truncation-martingale-closure}
\mathbb{E}\left[
Z^\beta(f)\mid\mathcal{F}_t
\right]
=
Z_t^\beta(f)
\qquad
\text{almost surely}.
\end{equation}

\item[(iii)] The measure
\begin{equation}
\label{eq:terminal-rooted-law}
\Theta^\beta(\mathrm{d}\theta,\mathrm{d}\omega)
:=
\frac{1}{2\pi\beta}
Z_\omega^\beta(\mathrm{d}\theta)
\mathbb{P}(\mathrm{d}\omega)
\end{equation}
is a probability measure. 
For every \(t\geq0\), its restriction to \(\mathcal{B}(\mathbb{T})\otimes\mathcal{F}_t\) is
\begin{equation}
\label{eq:finite-rooted-law}
\Theta_t^\beta(\mathrm{d}\theta,\mathrm{d}\omega)
:=
\frac{1}{2\pi\beta}
Z_{t,\omega}^\beta(\mathrm{d}\theta)
\mathbb{P}(\mathrm{d}\omega).
\end{equation}
Under \(\Theta^\beta\), the root \(\theta\) is uniform on \(\mathbb{T}\),
and, conditionally on the root, \( \bigl(R_s^\beta(\theta)\bigr)_{s\geq0} \) is a three-dimensional Bessel process started from \(\beta\).
\end{enumerate}
\end{proposition}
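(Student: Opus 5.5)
The plan is to build everything from the positive martingale structure of \(F_t^\beta\) at a fixed point and a Kolmogorov-type consistency argument for the rooted laws \(\Theta_t^\beta\).

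\textbf{Step 1: the finite-time martingale and the rooted law.} For fixed \(\theta\), \(X_s(\theta)\) is a standard Brownian motion adapted to \((\mathcal{F}_s)\). Because the increments are independent in the scale variable, it is in fact an \((\mathcal{F}_s)\)-Brownian motion. Under the Girsanov tilt by \(e^{\sqrt2X_t-t}\), the process \(R^\beta_s(\theta)=\beta+\sqrt2 s-X_s(\theta)\) becomes a Brownian motion started from \(\beta\). Multiplying further by \(R^\beta_t\mathbf 1_{\{\tau^\beta_\theta>t\}}/\beta\) is the Doob \(h\)-transform of Brownian motion killed at \(0\) with \(h(x)=x\). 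It follows that \(F_t^\beta(\theta)\) is a nonnegative \((\mathcal{F}_t)\)-martingale with mean \(\beta\).

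Integrating in \(\theta\) (Fubini with nonnegative integrands), \(Z_t^\beta(f)\) is a martingale for bounded nonnegative \(f\), with \(\mathbb{E}Z_t^\beta(f)=\beta\int f\). Hence \(\Theta_t^\beta\) is a probability measure, and the family \((\Theta_t^\beta)_t\) is consistent on \(\mathcal{B}(\mathbb{T})\otimes\mathcal{F}_t\). By Girsanov and the \(h\)-transform, under \(\Theta_t^\beta\) the root is uniform, since the density of its marginal is \(\mathbb{E}F_t^\beta(\theta)/(2\pi\beta)=1/2\pi\). Conditionally on the root, \((R^\beta_s(\theta))_{s\le t}\) is a Bessel-3 process from \(\beta\). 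The consistent family extends to a probability \(\Theta^\beta\) on \(\mathcal{B}(\mathbb{T})\otimes\mathcal{F}_\infty\) (standard Polish-space extension), and under it the full path \(R^\beta(\theta)\) is Bessel-3.

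\textbf{Step 2: almost sure convergence and uniform integrability.} Nonnegative martingales converge almost surely. Applying this to \(Z_t^\beta(f)\) for \(f\) in a countable dense family of continuous nonnegative functions, together with tightness on the compact space \(\mathbb{T}\) (the total masses converge), gives an almost sure weak limit \(Z^\beta\). This proves (i).

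For (ii), the key point is that the martingale \(Z_t^\beta(\mathbb{T})\) is uniformly integrable. The standard criterion is that \(\liminf_t Z_t^\beta(\mathbb{T})<\infty\) holds \(\Theta^\beta\)-almost surely, i.e. the measure-change is absolutely continuous. The decomposition \(\Theta^\beta=\frac{1}{2\pi\beta}Z^\beta\,\mathrm{d}\mathbb{P}+\Theta^\beta|_{\{\limsup Z_t=\infty\}}\) then shows the singular part vanishes.

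To verify that \(\liminf_t Z_t^\beta(\mathbb{T})<\infty\) \(\Theta^\beta\)-a.s., use the spine decomposition. Compute \(\mathbb{E}_{\Theta^\beta}[Z_t^\beta(\mathbb{T})\mid \text{root},\ R^\beta(\theta)]\), which is a conditional expectation of the mass seen from the spine. The contribution of points \(\theta'\) at distance \(\approx e^{-s}\) from the root decouples after time \(s\) and is bounded by roughly \(e^{-s}R_s\,e^{\sqrt2X_s(\theta)-s}\cdot e^{s}\)-type terms. These reduce to quantities like \(\sum_s R_s e^{-\sqrt2R_s}\)-weighted sums, using \(\sqrt2X_s-2s\approx-\sqrt2 R_s\). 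Since the Bessel-3 process grows like \(\sqrt s\) and \(R_s\ge s^{1/2-\epsilon}\) eventually, these sums are almost surely finite. Conditional Fatou then gives \(\liminf<\infty\). Alternatively, and preferably, since each short lifted arc \(J\) carries exactly the DRSV compact-range law by \eqref{eq:exact-local-lifting-covariance}, I would cite the uniform integrability of the positive truncated critical martingale from \cite{DRSVDerivative} on finitely many arcs of diameter less than \(1/2\) covering \(\mathbb{T}\), and sum over them.

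Uniform integrability yields \(\mathbb{E}[Z^\beta(f)\mid\mathcal{F}_t]=Z_t^\beta(f)\) for continuous \(f\), and hence \eqref{eq:positive-truncation-first-moment}. A monotone class argument extends both identities to bounded Borel \(f\). Weak convergence alone only gives the identities for continuous \(f\), but the closure identity defines consistent measures, and both sides are measures in \(f\).

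\textbf{Step 3: completing (iii).} With closure established, for \(A\in\mathcal{B}(\mathbb{T})\otimes\mathcal{F}_t\) we have \(\frac{1}{2\pi\beta}\mathbb{E}\int\mathbf 1_A Z^\beta(\mathrm d\theta)=\Theta_t^\beta(A)\). To justify this, apply closure to integrands \(f(\theta)\mathbf 1_B(\omega)\) with \(B\in\mathcal{F}_t\) and use a monotone class argument. Thus the measure in \eqref{eq:terminal-rooted-law} restricts to \(\Theta_t^\beta\) and coincides with the extension built in Step 1, and it inherits the uniform root and Bessel-3 description.

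The main obstacle is the uniform integrability in Step 2. I would first try to transfer it arc-by-arc from the real-line theory, which requires checking that the filtrations match after lifting. If that is insufficient, I would run the spine estimate above.
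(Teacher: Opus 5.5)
Your proposal is correct and, in its preferred form, is essentially the paper's proof: the one-point Girsanov/Doob $h$-transform martingale, Fubini, uniform integrability imported from \cite[Proposition~14]{DRSVDerivative} on a finite cover by short lifted arcs, almost-sure weak convergence via a countable dense family, extension of the closure identity to bounded Borel $f$ by comparing finite measures, and the finite-horizon Bessel-3 identification. The only real difference is your preliminary projective extension of $(\Theta_t^\beta)_{t\ge0}$, which is needed only for the spine fallback; the paper avoids it by defining $\Theta^\beta$ directly from $Z^\beta$ and reading off its restrictions from the closure identity, which is simpler and also sidesteps the care such an extension requires on completed $\sigma$-algebras.
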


\begin{proof}
We first establish the martingale and uniform-integrability input used below. 
For fixed \(\theta\in\mathbb{T}\), \((X_t(\theta))_{t\geq0}\) is a standard Brownian motion. 
The usual one-point Girsanov--stopping argument, as in \cite[proof of Proposition~13]{DRSVDerivative}, yields
\begin{equation}
\label{eq:pointwise-positive-martingale}
\mathbb{E}\left[
F_t^\beta(\theta)\mid\mathcal{F}_s
\right]
=
F_s^\beta(\theta),
\qquad
0\leq s\leq t.
\end{equation}
Indeed, after the exponential change of measure associated with \( e^{\sqrt{2}(X_t(\theta)-X_s(\theta))-(t-s)} \),
the future barrier process is Brownian motion started from \(R_s^\beta(\theta)\) and killed at zero, and the required conditional mean follows from optional stopping. 
Conditional Fubini then shows that \(Z_t^\beta(f)\) is a martingale for every bounded Borel \(f\colon\mathbb{T}\to\mathbb{C}\).

We next prove uniform integrability. 
Cover \(\mathbb{T}\) by finitely many open arcs \(J_1,\ldots,J_L\), each of diameter less than \(1/2\).
By \eqref{eq:exact-local-lifting-covariance}, after lifting \(J_j\) to the real line,
the complete space-time field has exactly the compact-range law considered in \cite{DRSVDerivative}. 
The present kernel satisfies the hypotheses of \cite[Proposition~14]{DRSVDerivative}; hence \( (Z_t^\beta(J_j))_{t\geq0} \) is uniformly integrable for every \(j\). 
Since
\[
0\leq
Z_t^\beta(\mathbb{T})
\leq
\sum_{j=1}^L Z_t^\beta(J_j),
\]
the total masses form a uniformly integrable family. 
Therefore,
\[
|Z_t^\beta(f)|
\leq
\|f\|_{L^\infty(\mathbb{T})}
Z_t^\beta(\mathbb{T})
\]
gives uniform integrability of \((Z_t^\beta(f))_{t\geq0}\) for every bounded Borel \(f\).

Choose a countable uniformly dense subset \(\mathcal{D}\subset C(\mathbb{T})\) containing the constant function \(1\). 
On a common probability-one event, \(Z_t^\beta(f)\) converges as \(t\to\infty\) for every \(f\in\mathcal{D}\),
and \(Z_t^\beta(\mathbb{T})\) remains bounded for all sufficiently large \(t\). 
Since \(\mathbb{T}\) is compact, every sequence \(t_n\to\infty\) has a subsequence along which \(Z_{t_n}^\beta\) converges weakly to a finite positive measure.
Any two such subsequential limits agree on \(\mathcal{D}\), and hence on all of \(C(\mathbb{T})\). 
Thus, the weak limit is unique, which proves \eqref{eq:positive-truncation-weak-limit}.

Let \(f\in C(\mathbb{T})\). 
By \eqref{eq:positive-truncation-weak-limit},
\[
Z_t^\beta(f)\longrightarrow Z^\beta(f)
\qquad
\text{almost surely}.
\]
Uniform integrability identifies this limit with the terminal value of the martingale, so
\begin{equation}
\label{eq:continuous-test-martingale-closure}
\mathbb{E}\left[
Z^\beta(f)\mid\mathcal{F}_t
\right]
=
Z_t^\beta(f).
\end{equation}
Since \( Z_0^\beta(\mathrm{d}\theta) = \beta\,\mathrm{d}\theta \), we also have
\begin{equation}
\label{eq:continuous-test-first-moment}
\mathbb{E}\bigl[Z^\beta(f)\bigr]
=
\beta
\int_{\mathbb{T}}
f(\theta)\,\mathrm{d}\theta.
\end{equation}
The two sides of \eqref{eq:continuous-test-first-moment} define finite Borel measures that agree on \(C(\mathbb{T})\). 
They therefore agree as measures, which proves \eqref{eq:positive-truncation-first-moment} for every bounded Borel \(f\).
Similarly, fix \(t\geq0\) and \(G\in\mathcal{F}_t\), and define the finite Borel measures
\[
\mu_G(A)
=
\mathbb{E}\left[
\mathbf{1}_G Z^\beta(A)
\right],
\qquad
\nu_G(A)
=
\mathbb{E}\left[
\mathbf{1}_G Z_t^\beta(A)
\right].
\]
By \eqref{eq:continuous-test-martingale-closure},
\[
\mu_G=\nu_G.
\]
Since \(G\in\mathcal{F}_t\) is arbitrary, \eqref{eq:positive-truncation-martingale-closure} follows.

Equation \eqref{eq:positive-truncation-first-moment} shows that \(\Theta^\beta\) is a probability measure with spatial marginal \(\mathrm{d}\theta/(2\pi)\).  
Moreover, for every Borel \(A\subset\mathbb{T}\) and \(G\in\mathcal{F}_t\), by \eqref{eq:positive-truncation-martingale-closure},
\[
\Theta^\beta(A\times G)
=
\frac{1}{2\pi\beta}
\mathbb{E}\left[
\mathbf{1}_G Z_t^\beta(A)
\right],
\]
which proves \eqref{eq:finite-rooted-law}.

Finally, conditionally on the root \(\theta\), the Radon--Nikodym density of \(\Theta_t^\beta\) with respect to \(\mathbb{P}\) is
\[
\frac{1}{\beta}
R_t^\beta(\theta)
\mathbf{1}_{\{\tau_\theta^\beta>t\}}
e^{\sqrt{2}X_t(\theta)-t}.
\]
Under the exponential tilt,  \(R^\beta(\theta)\) is Brownian motion started from \(\beta\), and the factor
\[
\frac{R_t^\beta(\theta)}{\beta}
\mathbf{1}_{\{\tau_\theta^\beta>t\}}
\]
is precisely its Doob \(h\)-transform density, with \(h(x)=x\), for Brownian motion killed at zero.  
Hence, the transformed process is \(\operatorname{BES}(3)\) started from \(\beta\);
this is the rooted change of measure used in \cite[proof of Proposition~14]{DRSVDerivative}.  
Since this identification holds on every finite horizon, the complete rooted path under \(\Theta^\beta\) is \(\operatorname{BES}(3)\) started from \(\beta\).
\end{proof}

\subsection{The global barrier and derivative limit}
\label{subsec:global-barrier-derivative-limit}

Define the ordinary critical measures
\begin{equation}
\label{eq:ordinary-critical-measure}
M_t^{\sqrt{2}}(\mathrm{d}\theta)
=
e^{\sqrt{2}X_t(\theta)-t}\,\mathrm{d}\theta,
\end{equation}
and the signed derivative approximations
\begin{equation}
\label{eq:derivative-approximation}
M_t'(\mathrm{d}\theta)
=
\bigl(\sqrt{2}\,t-X_t(\theta)\bigr)
e^{\sqrt{2}X_t(\theta)-t}\,\mathrm{d}\theta.
\end{equation}
Set
\begin{equation}
\label{eq:global-barrier-supremum}
S
=
\sup_{t\geq0}
\sup_{\theta\in\mathbb{T}}
\bigl(X_t(\theta)-\sqrt{2}\,t\bigr),
\end{equation}
and, for \(\beta>0\),
\begin{equation}
\label{eq:global-barrier-event}
E_\beta
=
\{S<\beta\}.
\end{equation}

Choose a finite cover of \(\mathbb{T}\) by open arcs of diameter less than \(1/2\). 
By \eqref{eq:exact-local-lifting-covariance}, the field on each lifted arc has exactly the compact-range real-line law of \cite{DRSVDerivative}.  
Applying \cite[Proposition~19]{DRSVDerivative} on each lifted arc and then using the finite cover, we obtain that almost surely
\begin{equation}\label{eq:ordinary-critical-total-mass-vanishing}
M_t^{\sqrt{2}}(\mathbb{T}) \longrightarrow 0
\qquad\text{and}\qquad
S<\infty.
\end{equation}

\begin{proposition}
\label{prop:global-barrier-derivative-limit}
There exists a positive finite random measure \(M_*'\) on \(\mathbb{T}\), with full support, such that
\begin{equation}
\label{eq:global-derivative-weak-limit}
M_t'
\xrightarrow[t\to\infty]{\mathrm{w}}
M_*'
\qquad
\text{almost surely}.
\end{equation}
For every deterministic \(\beta>0\),
\begin{equation}
\label{eq:barrier-identification}
M_*'=Z^\beta
\qquad
\text{on }E_\beta.
\end{equation}
\end{proposition}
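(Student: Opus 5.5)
The plan is to exploit the fact that on the global barrier event \(E_\beta\) the killing indicator in \eqref{eq:positive-truncated-density} is identically one for all \(t\geq0\). This gives the exact pathwise identity
\[
F_t^\beta(\theta)
=
\beta e^{\sqrt{2}X_t(\theta)-t}
+
\bigl(\sqrt{2}\,t-X_t(\theta)\bigr)e^{\sqrt{2}X_t(\theta)-t},
\]
that is, on \(E_\beta\),
\[
Z_t^\beta=\beta M_t^{\sqrt{2}}+M_t'
\qquad\text{for all }t\geq0 .
\]
By \eqref{eq:ordinary-critical-total-mass-vanishing}, \(\beta M_t^{\sqrt{2}}(\mathbb{T})\to0\) almost surely, so \(\beta M_t^{\sqrt{2}}\to0\) weakly, and more precisely in total variation. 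Combining this with Proposition~\ref{prop:periodized-positive-rooted-package}(i), we will conclude that on \(E_\beta\) the signed measures \(M_t'\) converge weakly almost surely to \(Z^\beta\).

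To define \(M_*'\) globally, we use that \(S<\infty\) almost surely, again by \eqref{eq:ordinary-critical-total-mass-vanishing}. Hence
\[
\Omega_0=\bigcup_{j\in\mathbb{N}}E_j
\]
has full probability. On \(\Omega_0\setminus N\), where \(N\) is the union of the countably many null sets on which \eqref{eq:positive-truncation-weak-limit} fails for \(\beta=j\), we set \(M_*'\) equal to the weak limit of \(M_t'\). This limit exists because the point lies in some \(E_j\), and it is unique because weak limits are unique. On the null complement we set \(M_*'=0\). This gives \eqref{eq:global-derivative-weak-limit}.

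For a general deterministic \(\beta>0\), we run the same argument with this \(\beta\). Off a null set, on \(E_\beta\) we have \(M_t'\to Z^\beta\) and also \(M_t'\to M_*'\), so uniqueness of weak limits yields \eqref{eq:barrier-identification} almost surely. Positivity and finiteness of \(M_*'\) are inherited from \(Z^\beta\) on each \(E_j\). One point needs care: \(M_t'\) is signed, so the weak convergence must be phrased against continuous test functions \(f\). For each such \(f\), \(M_t'(f)=Z_t^\beta(f)-\beta M_t^{\sqrt{2}}(f)\) converges, and no compactness argument is needed for the signed family.

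The remaining and hardest point is \emph{full support}, together with nontriviality, since \(Z^\beta\) could a priori vanish on an arc. I would argue through the local real-line theory. Fix a rational arc \(I\) contained in a lifted arc \(J\) of diameter less than \(1/2\). By \eqref{eq:exact-local-lifting-covariance}, the restriction of \(M_*'\) to \(I\) coincides with the real-line derivative-martingale limit of \cite{DRSVDerivative} for the compact-range field. That reference gives \(M'(I)>0\) almost surely for every fixed interval, as part of its nondegeneracy and zero-one results. Alternatively, one can combine a zero-one law with \(\mathbb{E}[Z^\beta(I)]=\beta|I|>0\) from \eqref{eq:positive-truncation-first-moment}, plus a scaling/decoupling argument to upgrade positive probability to probability one. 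Intersecting over the countably many rational arcs then gives full support almost surely. I expect this last step to be the only one that depends genuinely on external input: the citation has to be checked for whether it provides almost-sure positivity on every fixed interval, or only nontriviality of the total mass.
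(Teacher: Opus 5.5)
Your proposal is correct and follows essentially the same route as the paper. The common steps are the identity \(Z_t^\beta=M_t'+\beta M_t^{\sqrt{2}}\) on \(E_\beta\), the vanishing of \(M_t^{\sqrt{2}}(\mathbb{T})\), patching the limits over integer barriers using \(S<\infty\), uniqueness of weak limits for deterministic \(\beta\), and full support via exact local lifting over a countable basis of short arcs. The external input you flag is exactly what the paper invokes, namely \cite[Theorem~17]{DRSVDerivative}, applied to a nonzero test function \(0\le\varphi_U\le1\) supported in each basic arc \(U\) to get \(M_*'(U)\ge M_*'(\varphi_U)>0\); the zero-one/scaling alternative is therefore not needed.
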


\begin{proof}
Fix an integer \(j\geq1\). 
On \(E_j\), the barrier is never crossed, so
\begin{equation}
\label{eq:derivative-truncation-identity}
Z_t^j
=
M_t'+jM_t^{\sqrt{2}},
\qquad t\geq0.
\end{equation}
Proposition~\ref{prop:periodized-positive-rooted-package} and \eqref{eq:ordinary-critical-total-mass-vanishing} therefore imply
\begin{equation}
\label{eq:derivative-limit-on-integer-barrier}
M_t'
\xrightarrow[t\to\infty]{\mathrm{w}}
Z^j
\qquad
\text{on }E_j.
\end{equation}
On \(E_j\cap E_k\), both \(Z^j\) and \(Z^k\) are weak limits of the same sequence \((M_t')_{t\geq0}\), and therefore agree. 
With \(E_0=\varnothing\), define
\begin{equation}
\label{eq:patched-derivative-limit}
M_*'
=
\sum_{j=1}^{\infty}
\mathbf{1}_{E_j\setminus E_{j-1}}Z^j.
\end{equation}
Exactly one term is present almost surely.  
The resulting random measure is positive and finite, and \eqref{eq:derivative-limit-on-integer-barrier} yields \eqref{eq:global-derivative-weak-limit}.

Now fix a deterministic \(\beta>0\). 
On \(E_\beta\),
\[
Z_t^\beta
=
M_t'+\beta M_t^{\sqrt{2}}.
\]
Passing to the weak limits in this identity proves \eqref{eq:barrier-identification}.

It remains to establish full support. 
Let \(\mathcal{U}\) be a countable basis of open arcs of diameter less than \(1/2\). 
For each \(U\in\mathcal{U}\), choose a nonzero function \(\varphi_U\in C(\mathbb{T})\) with
\[
0\leq\varphi_U\leq1,
\qquad
\operatorname{supp}\varphi_U\subset U.
\]
Exact local lifting and \cite[Theorem~17]{DRSVDerivative} imply that \(M_t'(\varphi_U)\) converges almost surely to a strictly positive limit. 
By \eqref{eq:global-derivative-weak-limit}, this limit is \(M_*'(\varphi_U)\). 
Intersecting the corresponding probability-one events over the countable family \(\mathcal{U}\), we obtain
\[
M_*'(U)>0
\qquad
\text{for every }U\in\mathcal{U}.
\]
Hence, \(M_*'\) has full support almost surely.
\end{proof}

We do not use the atomlessness conclusion of \cite[Theorem~17]{DRSVDerivative} here. 
Atomlessness will instead follow from the stronger rooted small-cell summability theorem in Section~\ref{sec:rooted-small-cells},
which is also needed for the moving cell tails.

\section{Rooted nonconcentration and moving cell tails}
\label{sec:rooted-small-cells}

This section proves the nonconcentration estimate that will later make the one-scale Fourier argument possible. 
Under the rooted law, the barrier process is a three-dimensional Bessel process. 
An exact Gaussian regression against the rooted path then shows that the conditional density at a nearby point contains a strictly negative displacement. 
This yields summable small-cell bounds at a typical root and, in turn, the realized and predictable moving mass tails needed in the annular concentration argument.

For \(m\geq1\) and \(0\leq j<2^m\), define
\[
I_j^{(m)}
=
\left[
\frac{2\pi j}{2^m},
\frac{2\pi(j+1)}{2^m}
\right),
\]
with endpoints interpreted on \(\mathbb{T}\), and set
\[
\mathcal{P}_m
=
\left\{
I_j^{(m)}:
0\leq j<2^m
\right\}.
\]
Thus, \(\mathcal{P}_m\) is the dyadic partition of \(\mathbb{T}\) into half-open arcs of equal length \( \ell_m=2\pi2^{-m}. \)
For \(\theta\in\mathbb{T}\), let \(Q_m(\theta)\) denote the unique cell of \(\mathcal{P}_m\) containing \(\theta\).

\subsection{Bessel envelopes and Gaussian regression}
\label{subsec:bessel-envelopes-regression}

Throughout the remainder of this section, \(\beta>0\) denotes a fixed
deterministic parameter unless explicitly quantified otherwise.

For \(T\geq0\), let
\[
\mathcal{H}_T
=
\sigma\left(
\theta,
\bigl(X_s(\theta)\bigr)_{0\leq s\leq T}
\right)
\]
be the sigma-algebra generated by the root and its path up to time \(T\).
Let
\[
\mathbb{Q}_0
=
\frac{\mathrm{d}\theta}{2\pi}\otimes\mathbb{P}
\]
be the product of uniform measure on the circle and the original environment law. 
By \eqref{eq:finite-rooted-law}, the Radon--Nikodym derivative of \(\Theta_T^\beta\) with respect to \(\mathbb{Q}_0\) is \( \frac{1}{\beta}F_T^\beta(\theta), \)
which is \(\mathcal{H}_T\)-measurable. 
Consequently, for every random variable \(U\) that is integrable under both \(\Theta_T^\beta\) and \(\mathbb{Q}_0\),
\begin{equation}
\label{eq:rooted-conditional-kernel-cancellation}
\mathbb{E}_{\Theta_T^\beta}
\left[
U\mid\mathcal{H}_T
\right]
=
\mathbb{E}_{\mathbb{Q}_0}
\left[
U\mid\mathcal{H}_T
\right]
\qquad
\Theta_T^\beta\text{-almost surely}.
\end{equation}
Thus, rooting changes the law of the root path but does not change the conditional Gaussian law of the field away from the root.

For \(R\geq1\) and \(T\geq0\), define
\begin{equation}
\label{eq:Bessel-envelope-event}
\mathcal{B}_R(T)
=
\biggl\{
\frac{\sqrt{s}}{R\log^2(2+s)}
\leq
R_s^\beta(\theta)
\leq
R\left(
1+\sqrt{s\log(1+s)}
\right),\quad
\forall\ 0\leq s\leq T
\biggr\}.
\end{equation}
Put
\[
\mathcal{B}_R(\infty)
=
\bigcap_{T\geq0}\mathcal{B}_R(T).
\]
By Proposition~\ref{prop:periodized-positive-rooted-package}, the standard upper and lower Bessel envelopes in \cite[Theorem~15]{DRSVDerivative},
together with continuity and strict positivity on bounded time intervals, imply that
\begin{equation}
\label{eq:Bessel-envelope-exhaustion}
\Theta^\beta\left(
\bigcup_{R=1}^{\infty}
\mathcal{B}_R(\infty)
\right)
=
1.
\end{equation}

Fix a root \(\theta\) and \(w\in\mathbb{T}\) with
\[
0<r=d_{\mathbb{T}}(\theta,w)<\frac{1}{2}.
\]
Choose lifts whose signed difference is \(h\in(-1/2,1/2)\), so that \(|h|=r\).  
For \(s\geq0\), define
\[
a_s(h)=k(e^sh),
\qquad
\alpha_s(h)=-e^sh\,k'(e^sh).
\]
Then,
\[
\frac{\mathrm d}{\mathrm ds}a_s(h)
=
-\alpha_s(h),
\]
and the monotonicity assumption on \(k\) gives \( \alpha_s(h)\geq0. \)
Also set
\[
A_s(h)
=
\int_0^s a_u(h)\,\mathrm{d}u,
\]
and define the Gaussian projection
\begin{equation}
\label{eq:rooted-Gaussian-projection}
P_s^h
=
\int_0^s
a_u(h)\,\mathrm{d}X_u(\theta).
\end{equation}

\begin{lemma}
\label{lem:exact-root-gaussian-regression}
Let
\[
\mathcal{Z}_s^h
=
X_s(w)-P_s^h.
\]
Then, \((\mathcal{Z}_s^h)_{s\geq0}\) is a centered Gaussian process.
It is independent of \(\bigl(X_t(\theta)\bigr)_{t\geq0}\), and
\begin{equation}
\label{eq:rooted-residual-covariance}
\mathbb{E}\left[
\mathcal{Z}_s^h\mathcal{Z}_{s'}^h
\right]
=
s\wedge s'
-
\int_0^{s\wedge s'}
a_u(h)^2\,\mathrm{d}u.
\end{equation}
Moreover, there is a constant \(C_{\beta,k}<\infty\) such that, whenever \( s\leq\log(1/r), \) one has
\begin{equation}
\label{eq:rooted-residual-variance-bound}
0\leq
\operatorname{Var}(\mathcal{Z}_s^h)
\leq C_{\beta,k},
\end{equation}
and
\begin{equation}
\label{eq:rooted-drift-correction-bound}
\left|
\sqrt{2}\bigl(s-A_s(h)\bigr)-\beta k(h)
\right|
\leq C_{\beta,k}.
\end{equation}
\end{lemma}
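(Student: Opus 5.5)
The plan is to compute everything from the joint Gaussian structure of the two Brownian motions \(X_\cdot(\theta)\) and \(X_\cdot(w)\), using the exact local lifting of Lemma~\ref{lem:periodized-positive-definite-local-lifting}.

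\textbf{Cross-covariation.} Since \(r<1/2\), the identity \eqref{eq:exact-local-lifting-covariance} holds with \(x-x'=h\). Substituting \(u=e^v\) gives
\[
\mathbb{E}[X_s(w)X_t(\theta)]=\int_0^{s\wedge t}k(e^vh)\,\mathrm{d}v=\int_0^{s\wedge t}a_v(h)\,\mathrm{d}v .
\]
The scale increments are independent, so the pair \((X_\cdot(\theta),X_\cdot(w))\) is a two-dimensional continuous Gaussian martingale in \((\mathcal{F}_t)\). Each coordinate is a standard Brownian motion, and the cross-variation is \(\mathrm{d}\langle X(w),X(\theta)\rangle_u=a_u(h)\,\mathrm{d}u\).

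\textbf{Projection, covariance and independence.} The integrand \(a_u(h)\) is deterministic and bounded by \(1\). Hence \(P^h_s\) is a Wiener integral, and \((\mathcal{Z}^h_s,X_t(\theta))_{s,t}\) is jointly centered Gaussian, being an \(L^2\)-limit of linear combinations of field values. The Itô isometry and its polarization give
\[
\mathbb{E}[P^h_sX_t(\theta)]=\int_0^{s\wedge t}a_u\,\mathrm{d}u,\qquad \mathbb{E}[X_s(w)P^h_{s'}]=\int_0^{s\wedge s'}a_u^2\,\mathrm{d}u,\qquad \mathbb{E}[P^h_sP^h_{s'}]=\int_0^{s\wedge s'}a_u^2\,\mathrm{d}u .
\]
From the first identity, \(\mathbb{E}[\mathcal{Z}^h_sX_t(\theta)]=0\) for all \(s,t\). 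Joint Gaussianity then gives independence of the whole process \(\mathcal{Z}^h\) from the whole path \((X_t(\theta))_{t\geq0}\). Expanding \(\mathbb{E}[\mathcal{Z}^h_s\mathcal{Z}^h_{s'}]\) yields \(s\wedge s'-2\int_0^{s\wedge s'}a_u^2\,\mathrm{d}u+\int_0^{s\wedge s'}a_u^2\,\mathrm{d}u\), which is \eqref{eq:rooted-residual-covariance}.

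\textbf{The two bounds.} Nonnegativity of the variance is automatic. Because \(k\) is even and smooth with \(k(0)=1\) and \(|k|\le1\), there is \(C_k\) with
\[
0\le 1-k(x)\le C_kx^2 .
\]
Here \(|k|\le1\) follows from \(\widehat{k}\ge0\) together with \(k(0)=1\). It gives \(1-a_u^2\le2(1-a_u)\le 2C_ke^{2u}r^2\). Integrating up to \(s\le\log(1/r)\) gives
\[
\operatorname{Var}(\mathcal{Z}^h_s)=\int_0^s(1-a_u^2)\,\mathrm{d}u\le C_k r^2e^{2s}\le C_k .
\]
In the same way,
\[
0\le s-A_s(h)=\int_0^s(1-a_u)\,\mathrm{d}u\le \tfrac12 C_k .
\]
Since \(0\le\beta|k(h)|\le\beta\), the triangle inequality gives \eqref{eq:rooted-drift-correction-bound} with \(C_{\beta,k}=\max(C_k,\ \tfrac{\sqrt2}{2}C_k+\beta)\).

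\textbf{Main obstacle.} The only delicate point is the justification step: identifying the cross-variation \(a_u\,\mathrm{d}u\) through the exact lifting, and checking that the stochastic integral \(P^h\) stays inside the Gaussian Hilbert space. This is needed so that zero covariance implies independence of the entire processes, not just finite-dimensional marginals. Both follow from the fact that the integrand is deterministic and bounded, after which independence reduces to a check on finite-dimensional distributions. The quantitative estimates are routine Taylor bounds near the maximum of \(k\) at \(0\).
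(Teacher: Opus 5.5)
Your proof is correct and follows essentially the same route as the paper. You show that $\mathcal{Z}^h_s$ is orthogonal to the rooted path via the It\^o covariance identity and the exact local lifting, deduce independence from joint Gaussianity, and obtain both bounds from $1-k(v)=O(v^2)$ near the origin. The only difference is cosmetic: you integrate the Taylor bound $C_k e^{2u}r^2$ directly, whereas the paper substitutes $v=e^u r$ to reduce the estimates to the fixed integrals $\int_0^1(1-k(v)^2)\,\mathrm{d}v/v$ and $\int_0^1(1-k(v))\,\mathrm{d}v/v$.
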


\begin{proof}
Since \(X\) is a centered Gaussian field and \(P_s^h\) is a deterministic linear Gaussian functional of the rooted path, 
\((\mathcal Z_s^h)_{s\geq0}\) is a centered Gaussian process. 
Since \((X_t(\theta))_{t\geq0}\) is a standard Brownian motion, the Itô covariance identity and \eqref{eq:exact-local-lifting-covariance} yield, for every \(t\geq0\),
\[
\mathbb{E}\left[
P_s^hX_t(\theta)
\right]
=
\int_0^{s\wedge t}
a_u(h)\,\mathrm{d}u
=
\mathbb{E}\left[
X_s(w)X_t(\theta)
\right].
\]
Thus, \(\mathcal{Z}_s^h\) is orthogonal to the complete rooted path.
Joint Gaussianity therefore implies independence.  
Subtracting the covariance of the projection yields \eqref{eq:rooted-residual-covariance}.

If \(s\leq\log(1/r)\), then \(e^sr\leq1\), and evenness of \(k\) implies
\[
\operatorname{Var}(\mathcal{Z}_s^h)
=
\int_0^s
\left(
1-k(e^ur)^2
\right)\,\mathrm{d}u
\leq
\int_0^1
\frac{1-k(v)^2}{v}\,\mathrm{d}v.
\]
Since \(k\) is even and smooth with \(k(0)=1\), this proves \eqref{eq:rooted-residual-variance-bound}.  
The estimate \eqref{eq:rooted-drift-correction-bound} follows analogously, with \(1-k(v)\) in place of \(1-k(v)^2\), together with \(0\leq k(h)\leq1\).
\end{proof}

The following identity records the sign structure behind the rooted estimate.

\begin{lemma}
\label{lem:rooted-projected-displacement}
Suppose that \( s\leq\log(1/r). \)
Define
\begin{equation}
\label{eq:rooted-negative-displacement}
\Delta_s^h
=
-\int_0^s
\alpha_u(h)R_u^\beta(\theta)\,\mathrm{d}u
-
a_s(h)R_s^\beta(\theta).
\end{equation}
Then,
\begin{equation}
\label{eq:rooted-displacement-projection-identity}
\Delta_s^h
=
P_s^h
-
\sqrt{2}A_s(h)
-
\beta k(h).
\end{equation}
Equivalently, with
\[
\vartheta_h(s)
=
\sqrt{2}\bigl(s-A_s(h)\bigr)-\beta k(h),
\]
one has
\begin{equation}
\label{eq:rooted-projection-centered-form}
P_s^h
=
\sqrt{2}\,s+\Delta_s^h-\vartheta_h(s).
\end{equation}

Fix \(R\geq1\). 
There are constants \(c_R>0\), \(C_R<\infty\), and \(s_R<\infty\), depending also on \(\beta\) and \(k\), such that on \(\mathcal{B}_R(T)\), whenever
\[
s_R\leq s\leq T
\qquad\text{and}\qquad
s\leq\log\frac{1}{r},
\]
one has
\begin{equation}
\label{eq:rooted-displacement-two-sided}
-C_R\left(
1+\sqrt{s\log(1+s)}
\right)
\leq
\Delta_s^h
\leq
-c_R\frac{\sqrt{s}}{\log^2(2+s)}.
\end{equation}
\end{lemma}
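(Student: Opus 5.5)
The identity \eqref{eq:rooted-displacement-projection-identity} comes from integrating by parts in the stochastic integral \eqref{eq:rooted-Gaussian-projection}, and the two-sided bound \eqref{eq:rooted-displacement-two-sided} then follows from the Bessel envelopes defining \(\mathcal{B}_R(T)\).

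\emph{The identity.} Write \(X_u(\theta)=\beta+\sqrt{2}\,u-R_u^\beta(\theta)\), so that \(\mathrm{d}X_u(\theta)=\sqrt{2}\,\mathrm{d}u-\mathrm{d}R_u^\beta(\theta)\). Then
\[
P_s^h=\sqrt{2}A_s(h)-\int_0^s a_u(h)\,\mathrm{d}R_u^\beta(\theta).
\]
Since \(a_u(h)\) is deterministic and \(C^1\) in \(u\), with derivative \(-\alpha_u(h)\), the integration by parts has no bracket term:
\[
\int_0^s a_u\,\mathrm{d}R_u
=a_sR_s-a_0R_0+\int_0^s\alpha_uR_u\,\mathrm{d}u .
\]
Using \(R_0=\beta\) and \(a_0(h)=k(h)\), this gives
\[
P_s^h=\sqrt{2}A_s(h)+\beta k(h)+\Delta_s^h,
\]
which is \eqref{eq:rooted-displacement-projection-identity}. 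Rearranging with the definition of \(\vartheta_h\) gives \eqref{eq:rooted-projection-centered-form}. This step needs neither \(s\leq\log(1/r)\) nor the envelope event.

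\emph{Upper bound on \(\Delta_s^h\).} Both terms in \(\Delta_s^h\) are nonpositive, because \(\alpha_u\geq0\), \(a_u\geq0\) and \(R_u>0\) on \(\mathcal{B}_R(T)\). It therefore suffices to bound one term from below. For \(s\leq\log(1/r)\) we have \(e^sr\leq1\). If in fact \(e^sr\leq1/2\), then \(a_s(h)=k(e^sr)\geq k(1/2)\), and the envelope gives
\[
a_sR_s\geq k(1/2)\,\frac{\sqrt{s}}{R\log^2(2+s)}.
\]
Positivity \(k(1/2)>0\) holds for the autocorrelation example, but for a general admissible kernel it must be checked. I expect this regime split to be the main obstacle.

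If \(1/2<e^sr\leq1\), where \(a_s\) may vanish, I would use the integral term instead. The substitution \(v=e^ur\) gives
\[
\int_{\log(1/(2r))}^{s}\alpha_u\,\mathrm{d}u=k(1/2)-k(e^sr),
\]
which is only useful if \(k(e^sr)\) is bounded away from \(k(1/2)\). More robustly, I would use the full mass
\[
\int_0^s\alpha_u\,\mathrm{d}u+a_s=a_0=k(h),
\]
which lies near \(k(0)=1\) when \(r\) is small. The concern is that this mass could sit at small \(u\), where \(R_u\) is not large. This does not happen: \(\alpha_u\) is supported where \(e^ur\) is of order one, that is \(u\geq\log(1/r)-O(1)\). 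There \(u\) is comparable to \(s\), and since \(s\geq s_R\) is large, the envelope gives \(R_u\gtrsim\sqrt{s}/(R\log^2(2+s))\).

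Combining the two regimes, for \(s\geq s_R\),
\[
\Delta_s^h\leq-\min_{u\in[s-O(1),\,s]}R_u\,\bigl(a_s+\textstyle\int\alpha\bigr)\leq -c_R\frac{\sqrt{s}}{\log^2(2+s)},
\]
with \(c_R\) of order \(k(h)/R\), and \(k(h)\geq k(1/2)\) when \(r<1/2\). Where \(k(1/2)\) is not known to be positive, I would use \(k(h)\) itself, together with the fact that \(r\) is small relative to \(e^{-s}\) whenever \(a_s\) is large.

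\emph{Lower bound on \(\Delta_s^h\).} Bound \(R_u\leq R(1+\sqrt{s\log(1+s)})\) for all \(u\leq s\). Then
\[
|\Delta_s^h|\leq\Bigl(\int_0^s\alpha_u\,\mathrm{d}u+a_s\Bigr)\,R\bigl(1+\sqrt{s\log(1+s)}\bigr)=k(h)\,R\bigl(1+\sqrt{s\log(1+s)}\bigr),
\]
and \(k(h)\leq1\), so one can take \(C_R=R\).
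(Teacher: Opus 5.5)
Your derivation of the identity by deterministic integration by parts is correct and agrees with the paper. So is your lower bound $\Delta_s^h\geq -R\bigl(1+\sqrt{s\log(1+s)}\bigr)$, obtained from the upper envelope and the total weight $a_s(h)+\int_0^s\alpha_u(h)\,\mathrm{d}u=k(h)\leq1$. The gap is in the upper bound $\Delta_s^h\leq -c_R\sqrt{s}/\log^2(2+s)$, which is the real content of the lemma.

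Your first regime needs $k(1/2)>0$, and this fails for the paper's own example. If $\varphi$ is supported in $[-1/4,1/4]$, its autocorrelation $k$ is supported in $[-1/2,1/2]$. Hence $k(1/2)=0$, and the endpoint weight $a_s(h)=k(e^sr)$ becomes arbitrarily small as $e^sr\uparrow1/2$. Your fallback claim, that $\alpha_u(h)$ is supported where $u\geq\log(1/r)-O(1)$, is also false. Since $\alpha_u(h)=-vk'(v)$ with $v=e^ur$, it is positive wherever $k'(v)<0$; for a kernel with $k'<0$ on $(0,1/2)$ it is positive for every $u<\log(1/(2r))$. Only its mass on $[0,\log(1/r)-L]$ is small, being at most $1-k(e^{-L})$. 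Consequently your final display, which pairs $\min_{[s-O(1),s]}R_u^\beta(\theta)$ with the full weight $k(h)$, is not justified. The closing remark about using $k(h)$ ``together with the fact that $r$ is small relative to $e^{-s}$'' does not repair it.

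The missing observation is that the window integral and the endpoint term must be combined, because together they telescope. You computed $\int_{\log(1/(2r))}^s\alpha_u(h)\,\mathrm{d}u=k(1/2)-k(e^sr)$ but did not add $a_s(h)=k(e^sr)$ to it. Choose $L$ with $k(e^{-L})\geq 1/2$. Since $\partial_u a_u(h)=-\alpha_u(h)$, $e^sr\leq1$, and $k$ is nonincreasing on $[0,\infty)$, for $s\geq L$ one has
\[
a_s(h)+\int_{s-L}^s\alpha_u(h)\,\mathrm{d}u=a_{s-L}(h)=k\bigl(e^{s-L}r\bigr)\geq k\bigl(e^{-L}\bigr)\geq\frac12 .
\]
This holds uniformly in $r$, with no case split and no positivity of $k$ at a fixed point. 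Discard the nonpositive contribution of $[0,s-L]$ to $\Delta_s^h$. On $[s-L,s]$, bound $R_u^\beta(\theta)$ below by the lower envelope, which there is at least a constant times $\sqrt{s}/(R\log^2(2+s))$ for large $s$. This gives the required bound with $c_R$ of order $1/R$, and it is exactly the paper's argument.
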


\begin{proof}
Stochastic integration by parts gives
\[
P_s^h
=
a_s(h)X_s(\theta)
+
\int_0^s
\alpha_u(h)X_u(\theta)\,\mathrm du.
\]
Using
\[
X_u(\theta)
=
\beta+\sqrt{2}\,u-R_u^\beta(\theta),
\]
one obtains \eqref{eq:rooted-displacement-projection-identity}.

Choose \(L>0\) such that \( k(e^{-L})\geq1/2. \)
Since \(e^sr\leq1\),
\[
a_s(h)
+
\int_{s-L}^s
\alpha_u(h)\,\mathrm{d}u
=
a_{s-L}(h)
\geq
k(e^{-L})
\geq\frac{1}{2}
\]
whenever \(s\geq L\).  
On \(\mathcal{B}_R(T)\), the lower Bessel envelope is comparable throughout the fixed interval \([s-L,s]\), and therefore
\[
-\Delta_s^h
\geq
c_R\frac{\sqrt{s}}{\log^2(2+s)}
\]
for all sufficiently large \(s\).
The upper bound in \eqref{eq:rooted-displacement-two-sided} follows similarly from the upper Bessel envelope and the fact that the total weight is bounded by \(1\). 
Enlarging the constants if necessary to cover bounded \(s\) completes the proof.
\end{proof}

\subsection{Finite-horizon rooted density estimates}
\label{subsec:finite-horizon-rooted-density}
For fixed \(\beta>0\) and \(R\geq1\), let
\begin{equation}
\label{eq:rooted-density-majorant}
G_{\beta,R}(s)
=
C_{\beta,R,k}
\left(
1+s\log(2+s)
\right)
\exp\left(
-c_{\beta,R,k}
\frac{\sqrt{s}}{\log^2(2+s)}
\right),
\qquad
s\geq0,
\end{equation}
where \(C_{\beta,R,k}<\infty\) and \(c_{\beta,R,k}>0\) are chosen sufficiently large and small, respectively,
in terms of the constants in Lemma~\ref{lem:rooted-projected-displacement}. 
Since
\[
\frac{\sqrt{s}}{\log^2(2+s)}
\geq
s^{1/3}
\]
for all sufficiently large \(s\), one has
\begin{equation}
\label{eq:rooted-density-majorant-moments}
\int_0^\infty
(1+s)^aG_{\beta,R}(s)\,\mathrm{d}s
<
\infty
\qquad
\text{for every }a\geq0.
\end{equation}

\begin{lemma}
\label{lem:finite-horizon-rooted-density}
Let \(0\leq T<\infty\), and let
\[
r=d_{\mathbb{T}}(\theta,w)<\frac{1}{2}.
\]
If \( e^{-T}<r<1/2, \) then
\begin{equation}
\label{eq:separated-rooted-density}
\mathbf{1}_{\mathcal{B}_R(T)}
\mathbb{E}_{\Theta_T^\beta}
\left[
F_T^\beta(w)\mid\mathcal{H}_T
\right]
\leq
\frac{1}{r}
G_{\beta,R}\left(
\log\frac{1}{r}
\right).
\end{equation}
If \( 0<r\leq e^{-T}, \) then
\begin{equation}
\label{eq:near-horizon-rooted-density}
\mathbf{1}_{\mathcal{B}_R(T)}
\mathbb{E}_{\Theta_T^\beta}
\left[
F_T^\beta(w)\mid\mathcal{H}_T
\right]
\leq
e^T G_{\beta,R}(T).
\end{equation}
\end{lemma}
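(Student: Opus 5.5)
The plan is to reduce both estimates to a single Gaussian computation at the time \(s=\min(T,\log(1/r))\), using the exact regression of Lemma~\ref{lem:exact-root-gaussian-regression} and the sign structure of Lemma~\ref{lem:rooted-projected-displacement}. By \eqref{eq:rooted-conditional-kernel-cancellation}, the conditional expectation under \(\Theta_T^\beta\) given \(\mathcal{H}_T\) can be computed under \(\mathbb{Q}_0\), that is, under the unrooted Gaussian law given the root path. Positivity of the integrand makes the integrability hypothesis harmless: one may truncate and pass to the limit by monotone convergence.

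\emph{Reduction to time \(s\).} In the separated case \(e^{-T}<r<1/2\), set \(s=\log(1/r)<T\). For \(u>s\) one has \(e^u r>1\), so \(k(e^u h)=0\). By \eqref{eq:exact-local-lifting-covariance}, the increments \((X_u(w)-X_s(w))_{u\geq s}\) are therefore uncorrelated with the whole root path \((X_t(\theta))_{t\geq0}\). By independence in the scale variable, they are also independent of \((X_u(w))_{u\leq s}\). Hence, conditionally on \(\mathcal{H}_T\vee\sigma(X_u(w):u\leq s)\), the future of \(X(w)\) is a fresh Brownian motion. The one-point Girsanov--optional stopping identity \eqref{eq:pointwise-positive-martingale} then gives
\[
\mathbb{E}_{\mathbb{Q}_0}\bigl[F_T^\beta(w)\mid\mathcal{H}_T,\,(X_u(w))_{u\leq s}\bigr]=F_s^\beta(w).
\]
In the near-horizon case \(r\leq e^{-T}\), simply take \(s=T\leq\log(1/r)\). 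In both cases it remains to bound
\[
\mathbb{E}_{\mathbb{Q}_0}\bigl[F_s^\beta(w)\mid\mathcal{H}_T\bigr],
\qquad
e^s=\tfrac1r \text{ resp. } e^T,
\]
with \(s\leq\log(1/r)\) and \(s\leq T\).

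\emph{Gaussian computation.} Drop the killing indicator and bound \(F_s^\beta(w)\leq (R_s^\beta(w))_+e^{\sqrt2X_s(w)-s}\). Write \(X_s(w)=P_s^h+\mathcal{Z}_s^h\). Here \(\mathcal{Z}_s^h\) is independent of the root path, with variance at most \(C_{\beta,k}\) by \eqref{eq:rooted-residual-variance-bound}. By \eqref{eq:rooted-projection-centered-form},
\[
\sqrt2X_s(w)-s=s+\sqrt2\Delta_s^h-\sqrt2\vartheta_h(s)+\sqrt2\mathcal{Z}_s^h,
\qquad
R_s^\beta(w)=\beta-\Delta_s^h+\vartheta_h(s)-\mathcal{Z}_s^h.
\]
Integrating out \(\mathcal{Z}_s^h\) with its bounded variance, and using \eqref{eq:rooted-drift-correction-bound} to bound \(|\vartheta_h(s)|\), yields
\[
\mathbb{E}_{\mathbb{Q}_0}[F_s^\beta(w)\mid\mathcal{H}_T]\leq C\,e^s\,\bigl(1+|\Delta_s^h|\bigr)\,e^{\sqrt2\Delta_s^h}.
\]
On \(\mathcal{B}_R(T)\) with \(s_R\leq s\), Lemma~\ref{lem:rooted-projected-displacement} bounds \(|\Delta_s^h|\) by \(C_R(1+\sqrt{s\log(1+s)})\leq C(1+s\log(2+s))\). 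It also bounds \(e^{\sqrt2\Delta_s^h}\) by \(\exp(-c_R\sqrt s/\log^2(2+s))\). This is exactly \(e^sG_{\beta,R}(s)\) once the constants in \eqref{eq:rooted-density-majorant} are chosen large and small enough, respectively. For \(s<s_R\), use \(\Delta_s^h\leq0\), which holds because \(a,\alpha,R^\beta\geq0\) on the envelope event. Together with the same upper bound on \(|\Delta_s^h|\), this gives a bounded quantity, which is absorbed by enlarging \(C_{\beta,R,k}\). Since \(\mathcal{B}_R(T)\subset\mathcal{B}_R(s)\) and \(\mathcal{B}_R(T)\) is \(\mathcal{H}_T\)-measurable, the indicator passes through the conditional expectation.

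\emph{Expected obstacle.} The main delicate point is the reduction step. One must check that the post-\(s\) increments at \(w\) are genuinely independent of the entire rooted path, including its part after time \(s\) up to \(T\), and not merely of its past. This relies on the compact support of \(k\) and on the exact local lifting, so the argument requires \(r<1/2\). It is also needed to justify applying the one-point martingale identity conditionally on the enlarged sigma-algebra.
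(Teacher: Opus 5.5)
Your proposal is correct and follows the paper's proof essentially step for step. It reduces to \(\mathbb{Q}_0\), performs the martingale cut at \(s=\log(1/r)\) using independence of the post-\(s\) increments at \(w\) from the full root path, writes \(X_s(w)=P_s^h+\mathcal{Z}_s^h\), integrates out the bounded-variance residual, and applies the displacement bounds of Lemma~\ref{lem:rooted-projected-displacement} on \(\mathcal{B}_R(T)\). The differences are only cosmetic: you condition on \(\mathcal{H}_T\vee\sigma(X_u(w):u\le s)\) instead of \(\mathcal{F}_s\vee\mathcal{H}_T\), you get the slightly sharper prefactor \(1+|\Delta_s^h|\) instead of \(1+(\Delta_s^h)^2\), and you handle \(s<s_R\) explicitly via \(\Delta_s^h\le 0\), which the paper leaves implicit in its choice of constants.
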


\begin{proof}
By \eqref{eq:rooted-conditional-kernel-cancellation}, all conditional Gaussian calculations may be performed under \(\mathbb{Q}_0\).

Suppose first that
\[
e^{-T}<r<\frac{1}{2},
\qquad
s=\log\frac{1}{r}<T.
\]
For \(u\geq s\),
\[
a_u(h)=k(e^uh)=0.
\]
The field increments at \(w\) after time \(s\) are therefore independent of the complete rooted path and of the field up to time \(s\).
Consequently, the pointwise martingale identity \eqref{eq:pointwise-positive-martingale} remains valid after enlarging the conditioning sigma-algebra from \(\mathcal{F}_s\) to  \(\mathcal{F}_s\vee\mathcal{H}_T\):
\[
\mathbb{E}_{\mathbb{Q}_0}
\left[
F_T^\beta(w)\mid\mathcal{F}_s\vee\mathcal{H}_T
\right]
=
F_s^\beta(w).
\]
Taking conditional expectation with respect to \(\mathcal{H}_T\) then gives
\begin{equation}
\label{eq:separated-martingale-cut}
\mathbb{E}_{\mathbb{Q}_0}
\left[
F_T^\beta(w)\mid\mathcal{H}_T
\right]
=
\mathbb{E}_{\mathbb{Q}_0}
\left[
F_s^\beta(w)\mid\mathcal{H}_T
\right].
\end{equation}
In the near-horizon regime, take instead \(s=T\).
In both cases, \(s\leq\log(1/r)\), and Lemma~\ref{lem:exact-root-gaussian-regression} applies.

Write \( X_s(w)=P_s^h+\mathcal{Z}_s^h. \)
By \eqref{eq:rooted-projection-centered-form},
\[
\beta+\sqrt{2}\,s-P_s^h
=
\beta-\Delta_s^h+\vartheta_h(s),
\]
where \(\vartheta_h(s)\) is uniformly bounded by \eqref{eq:rooted-drift-correction-bound}.
On the survival event at \(w\),
\[
R_s^\beta(w)
=
\beta+\sqrt{2}\,s-P_s^h-\mathcal{Z}_s^h
\geq0.
\]
Using \( x\mathbf{1}_{\{x\geq0\}} \leq 1+x^2, \) and dropping the path-survival constraint, we obtain
\[
\mathbb{E}_{\mathbb{Q}_0}
\left[
F_s^\beta(w)\mid\mathcal{H}_T
\right]
\leq
e^{\sqrt{2}P_s^h-s}
\mathbb{E}\left[
\left(
1+
\left(
\beta+\sqrt{2}\,s-P_s^h-\mathcal{Z}_s^h
\right)^2
\right)
e^{\sqrt{2}\mathcal{Z}_s^h}
\right].
\]
The residual variance is uniformly bounded by \eqref{eq:rooted-residual-variance-bound}. 
Elementary Gaussian moment identities, together with the boundedness of \(\vartheta_h(s)\), therefore yield
\begin{equation}
\label{eq:rooted-density-endpoint-Gaussian-bound}
\mathbb{E}_{\mathbb{Q}_0}
\left[
F_s^\beta(w)\mid\mathcal{H}_T
\right]
\leq
C_{\beta,k}
e^s
\left(
1+(\Delta_s^h)^2
\right)
e^{\sqrt{2}\Delta_s^h}.
\end{equation}

On \(\mathcal{B}_R(T)\), Lemma~\ref{lem:rooted-projected-displacement}
and the choice of the constants in \eqref{eq:rooted-density-majorant} imply that the right-hand side of \eqref{eq:rooted-density-endpoint-Gaussian-bound} is bounded by \( e^sG_{\beta,R}(s). \)
In the separated regime, \( e^s=1/r, \) and \eqref{eq:separated-martingale-cut} yields \eqref{eq:separated-rooted-density}. 
In the near-horizon regime, \(s=T\), which yields \eqref{eq:near-horizon-rooted-density}.
\end{proof}

\subsection{Rooted small-cell summability and moving cell tails}
\label{subsec:rooted-small-cell-summability}

\begin{theorem}\label{thm:rooted-small-cell-summability}
Fix \(\beta>0\) and \(p\geq0\). 
Almost surely,
\begin{equation}
\label{eq:rooted-polynomial-small-cell-summability}
\sum_{m=1}^{\infty}
m^pZ^\beta(Q_m(\theta))
<
\infty,
\qquad
\text{for }Z^\beta\text{-almost every }\theta.
\end{equation}
In particular,
\begin{equation}
\label{eq:rooted-polynomial-small-cell-decay}
m^pZ^\beta(Q_m(\theta))
\longrightarrow0,
\qquad
\text{for }Z^\beta\text{-almost every }\theta.
\end{equation}
\end{theorem}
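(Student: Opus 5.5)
The plan is to pass to the rooted law \(\Theta^\beta\) and show that the rooted sum has finite expectation on each Bessel envelope event. Since \(\Theta^\beta\) has density \(Z^\beta(\mathrm d\theta)\mathbb P(\mathrm d\omega)/(2\pi\beta)\), the statement ``almost surely, for \(Z^\beta\)-a.e.\ \(\theta\)'' is equivalent to
\[
\Theta^\beta\Bigl(\sum_{m\ge1} m^p Z^\beta(Q_m(\theta))=\infty\Bigr)=0 .
\]
By \eqref{eq:Bessel-envelope-exhaustion}, it suffices to prove, for each fixed \(R\ge1\),
\[
\mathbb E_{\Theta^\beta}\Bigl[\mathbf 1_{\mathcal B_R(\infty)}\sum_m m^p Z^\beta(Q_m(\theta))\Bigr]<\infty .
\]
The decay statement \eqref{eq:rooted-polynomial-small-cell-decay} then follows immediately, because the terms of a convergent series tend to zero.

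\textbf{Step 1: reduction to a finite horizon.} Fix \(m\) and write \(Z^\beta(Q_m(\theta))=\int \mathbf 1_{Q_m(\theta)}(w)\,Z^\beta(\mathrm dw)\). For \(m\) large, every \(w\in Q_m(\theta)\) satisfies \(r=d_{\mathbb T}(\theta,w)\le \ell_m<1/2\), so the small-\(m\) terms (finitely many, each bounded by the total mass, which has finite \(\Theta^\beta\)-mean) can be discarded. I would work at a finite horizon \(T\). Since \(\mathcal B_R(\infty)\subset\mathcal B_R(T)\), and since by \eqref{eq:finite-rooted-law} and the martingale property the rooted mean of \(Z^\beta(A)\) on \(\mathcal H_T\)-measurable events is controlled by that of \(Z^\beta_T(A)\), I would bound
\[
\mathbb E_{\Theta^\beta}\bigl[\mathbf 1_{\mathcal B_R(T)}Z^\beta(Q_m(\theta))\bigr]
\le \liminf_{T\to\infty}\mathbb E_{\Theta^\beta_{T}}\Bigl[\mathbf 1_{\mathcal B_R(T)}\int_{Q_m(\theta)} F_T^\beta(w)\,\mathrm dw\Bigr].
\]
The delicate point here is that the mass is rooted at \(\theta\) while the integrand is evaluated at \(w\). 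To justify this, I would use the two-point (Palm-type) identity: the pair \((\theta,\omega)\) under \(\Theta^\beta\), together with \(Z^\beta(\mathrm dw)\), is the weak limit of the pair under \(\Theta^\beta_T\) together with \(F_T^\beta(w)\,\mathrm dw\). I would take this limit via Fatou on closed neighbourhoods of \(Q_m(\theta)\), slightly enlarging the cell, which is harmless. Conditioning on \(\mathcal H_T\), the inner quantity becomes \(\int_{Q_m(\theta)}\mathbb E_{\Theta_T^\beta}[F_T^\beta(w)\mid\mathcal H_T]\,\mathrm dw\). I expect this limiting/Palm step to be the main technical obstacle, since \(\mathcal B_R(T)\) is not preserved under weak limits in \(T\) and \(Z^\beta\) is not \(\mathcal F_T\)-measurable.

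\textbf{Step 2: applying the density lemma.} I would apply Lemma~\ref{lem:finite-horizon-rooted-density}. The separated regime \(e^{-T}<r\le\ell_m\) contributes, using symmetric radii on both sides of \(\theta\),
\[
\int_{e^{-T}}^{\ell_m}\frac{2}{r}\,G_{\beta,R}(\log(1/r))\,\mathrm dr
=2\int_{\log(1/\ell_m)}^{T}G_{\beta,R}(s)\,\mathrm ds .
\]
The near-horizon regime \(r\le e^{-T}\) contributes at most \(2e^{-T}\cdot e^TG_{\beta,R}(T)=2G_{\beta,R}(T)\to0\). Hence
\[
\mathbb E_{\Theta^\beta}\bigl[\mathbf 1_{\mathcal B_R(\infty)}Z^\beta(Q_m(\theta))\bigr]\le 2\int_{m\log2-C}^\infty G_{\beta,R}(s)\,\mathrm ds .
\]

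\textbf{Step 3: summation.} Summing over \(m\) with the weight \(m^p\), and exchanging sum and integral, gives at most
\[
C\int_0^\infty (1+s)^{p+1}G_{\beta,R}(s)\,\mathrm ds ,
\]
which is finite by \eqref{eq:rooted-density-majorant-moments}. This yields \(\Theta^\beta\)-almost sure finiteness on each \(\mathcal B_R(\infty)\), and hence the theorem.
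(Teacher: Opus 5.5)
Your outline is the paper's proof: root by $Z^\beta$, reduce via the Bessel-envelope exhaustion to fixed $R$, bound the finite-horizon rooted cell mass by conditional Fubini and Lemma~\ref{lem:finite-horizon-rooted-density}, let $T\to\infty$ by Fatou, and sum with Tonelli and the moment bound on $G_{\beta,R}$. Your Steps~2 and~3 reproduce the paper's majorant $h_{\beta,R}(m)$ and its weighted summation. The gap is in the step you flag as the main obstacle, the passage from $\Theta_T^\beta$ to $\Theta^\beta$ in Step~1. There, the left-hand side of your display should carry $\mathcal{B}_R(\infty)$, not $\mathcal{B}_R(T)$. The justification you sketch for this step has three problems.
\begin{enumerate}
\item The rooted mean $\mathbb{E}_{\Theta^\beta}[\mathbf{1}_E Z^\beta(Q_m(\theta))]$ is quadratic in $Z^\beta$: one factor comes from the rooting and one from the cell. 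So the $\mathbb{P}$-martingale identity $\mathbb{E}[Z^\beta(A)\mid\mathcal{F}_T]=Z_T^\beta(A)$ gives no fixed-$T$ comparison; only a $\liminf$ inequality can be expected.
\item Fatou on closed neighbourhoods goes the wrong way in the portmanteau theorem, since for closed $F$ one only has $\limsup_T Z_T^\beta(F)\le Z^\beta(F)$.
\item A weak limit of laws does not handle the path-dependent, noncontinuous indicators $\mathbf{1}_{\mathcal{B}_R(T)}$.
\end{enumerate}

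The missing observation is that no Palm-type or two-point limit is needed. By Proposition~\ref{prop:periodized-positive-rooted-package}(iii), the restriction of $\Theta^\beta$ to $\mathcal{B}(\mathbb{T})\otimes\mathcal{F}_T$ is $\Theta_T^\beta$. The variable $\mathbf{1}_{\mathcal{B}_R(T)}Z_T^\beta(Q_m(\theta))$ is measurable for that sigma-algebra, hence
\[
\mathbb{E}_{\Theta_T^\beta}\bigl[\mathbf{1}_{\mathcal{B}_R(T)}Z_T^\beta(Q_m(\theta))\bigr]
=
\mathbb{E}_{\Theta^\beta}\bigl[\mathbf{1}_{\mathcal{B}_R(T)}Z_T^\beta(Q_m(\theta))\bigr].
\]
In words, you root with the terminal measure while measuring the cell with the finite-time one, so all horizons live on the single space $(\mathbb{T}\times\Omega,\Theta^\beta)$. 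Three facts then hold on this space.
\begin{enumerate}
\item The environment marginal of $\Theta^\beta$ is absolutely continuous with respect to $\mathbb{P}$, so $Z_T^\beta\to Z^\beta$ weakly $\Theta^\beta$-almost surely.
\item The deterministic dyadic endpoints are $Z^\beta$-null by \eqref{eq:positive-truncation-first-moment}, so $Z_T^\beta(Q_m(\theta))\to Z^\beta(Q_m(\theta))$. Alternatively, use the open arc $\{w:d_{\mathbb{T}}(\theta,w)<\ell_m\}\supset Q_m(\theta)$, which matches your symmetric-radius computation.
\item $\mathbf{1}_{\mathcal{B}_R(\infty)}\le\mathbf{1}_{\mathcal{B}_R(T)}$.
\end{enumerate}
Pointwise Fatou then gives $\mathbb{E}_{\Theta^\beta}[\mathbf{1}_{\mathcal{B}_R(\infty)}Z^\beta(Q_m(\theta))]\le\liminf_T(\cdots)$ directly.

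A smaller point: for the finitely many small $m$ you only need $Z^\beta(\mathbb{T})<\infty$ almost surely. A finite $\Theta^\beta$-mean of $Z^\beta(\mathbb{T})$ would amount to $\mathbb{E}[Z^\beta(\mathbb{T})^2]<\infty$, which is neither established in the paper nor needed.
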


\begin{proof}
It suffices to prove \eqref{eq:rooted-polynomial-small-cell-summability}, as \eqref{eq:rooted-polynomial-small-cell-decay} is then immediate.
Choose \(m_0\) so large that
\[
\ell_m=2\pi2^{-m}<\frac{1}{2}
\qquad
\text{for }m\geq m_0,
\]
and put
\[
u_m
=
\log\frac{1}{\ell_m}
=
m\log2-\log(2\pi).
\]
Fix \(R\geq1\), \(m\geq m_0\), and \(T\geq u_m\). 
For a root \(\theta\), split \(Q_m(\theta)\) into
\[
d_{\mathbb{T}}(\theta,w)\leq e^{-T}
\qquad
\text{and}
\qquad
e^{-T}<d_{\mathbb{T}}(\theta,w)\leq\ell_m.
\]
Conditional Fubini and Lemma~\ref{lem:finite-horizon-rooted-density} give
\begin{equation}
\label{eq:finite-horizon-cell-bound}
\mathbf{1}_{\mathcal{B}_R(T)}
\mathbb{E}_{\Theta_T^\beta}
\left[
Z_T^\beta(Q_m(\theta))
\mid\mathcal{H}_T
\right]
\leq
C_{\beta,R,k}
\int_{u_m}^{T}
G_{\beta,R}(s)\,\mathrm{d}s
+
C_{\beta,R,k}G_{\beta,R}(T).
\end{equation}
Indeed, the separated estimate is integrated using
\[
s=\log\frac{1}{d_{\mathbb{T}}(\theta,w)},
\]
while the near region has length \(O(e^{-T})\), which cancels the factor \(e^T\) in \eqref{eq:near-horizon-rooted-density}.

Define
\begin{equation}
\label{eq:rooted-cell-deterministic-majorant}
h_{\beta,R}(m)
=
C_{\beta,R,k}
\int_{u_m}^{\infty}
G_{\beta,R}(s)\,\mathrm{d}s.
\end{equation}
By \eqref{eq:rooted-density-majorant-moments},
\begin{equation}
\label{eq:rooted-cell-majorant-summability}
\begin{aligned}
\sum_{m=m_0}^{\infty}
m^ph_{\beta,R}(m)
&\leq
C_{\beta,R,k}
\int_0^\infty
G_{\beta,R}(s)
\sum_{\substack{m\geq m_0\\u_m\leq s}}
m^p\,\mathrm{d}s
\\
&\leq
C_{\beta,R,k,p}
\int_0^\infty
(1+s)^{p+1}G_{\beta,R}(s)\,\mathrm{d}s
<\infty.
\end{aligned}
\end{equation}

Taking \(\Theta_T^\beta\)-expectations in \eqref{eq:finite-horizon-cell-bound} and using the tower property,
the left-hand side becomes \( \mathbb{E}_{\Theta_T^\beta} [ \mathbf{1}_{\mathcal{B}_R(T)} Z_T^\beta(Q_m(\theta)) ] \).
The integrand is \(\mathcal{B}(\mathbb{T})\otimes\mathcal{F}_T\)-measurable,
so the projective identity in Proposition~\ref{prop:periodized-positive-rooted-package} identifies this expectation with the corresponding \(\Theta^\beta\)-expectation.
The environment marginal of \(\Theta^\beta\) is absolutely continuous with respect to \(\mathbb{P}\). 
Hence, the almost-sure weak convergence
\[
Z_T^\beta\xrightarrow{\mathrm{w}}Z^\beta
\]
also holds \(\Theta^\beta\)-almost surely.

By \eqref{eq:positive-truncation-first-moment}, \(Z^\beta\) almost surely charges none of the countably many deterministic partition endpoints.
By the same absolute continuity, this remains true \(\Theta^\beta\)-almost surely. 
Hence, every cell in every \(\mathcal{P}_m\) is a \(Z^\beta\)-continuity set, and therefore
\[
Z_T^\beta(Q_m(\theta))
\longrightarrow
Z^\beta(Q_m(\theta))
\]
for \(\Theta^\beta\)-almost every rooted environment.

Since \( \mathcal{B}_R(T)\downarrow\mathcal{B}_R(\infty) \) and \( G_{\beta,R}(T)\longrightarrow0, \) Fatou's lemma gives
\begin{equation}
\label{eq:terminal-rooted-cell-expectation}
\mathbb{E}_{\Theta^\beta}
\left[
\mathbf{1}_{\mathcal{B}_R(\infty)}
Z^\beta(Q_m(\theta))
\right]
\leq
h_{\beta,R}(m).
\end{equation}
Tonelli's theorem and \eqref{eq:rooted-cell-majorant-summability} now yield
\[
\mathbb{E}_{\Theta^\beta}
\left[
\mathbf{1}_{\mathcal{B}_R(\infty)}
\sum_{m=m_0}^{\infty}
m^pZ^\beta(Q_m(\theta))
\right]
<
\infty.
\]
Using \eqref{eq:Bessel-envelope-exhaustion}, we conclude that
\[
\sum_{m=1}^{\infty}
m^pZ^\beta(Q_m(\theta))
<
\infty
\]
for \(\Theta^\beta\)-almost every rooted environment.

Finally,
\[
0
=
\Theta^\beta\left(
\sum_{m=1}^{\infty}
m^pZ^\beta(Q_m(\theta))
=
\infty
\right)
=
\frac{1}{2\pi\beta}
\mathbb{E}\left[
\int_{\mathbb{T}}
\mathbf{1}_{\left\{
\sum_m m^pZ^\beta(Q_m(\theta))=\infty
\right\}}
Z^\beta(\mathrm{d}\theta)
\right].
\]
The integrand is nonnegative, which proves \eqref{eq:rooted-polynomial-small-cell-summability}.
\end{proof}

\begin{corollary}
\label{cor:atomlessness}
For every \(\beta>0\), the measure \(Z^\beta\) is almost surely atomless.
Consequently, the global derivative limit \(M_*'\) is almost surely atomless.
\end{corollary}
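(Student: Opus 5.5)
Atomlessness follows from Theorem~\ref{thm:rooted-small-cell-summability} with \(p=0\), plus the fact that \(Z^\beta\) charges no deterministic point. Suppose, for contradiction, that with positive probability \(Z^\beta\) has an atom at some (random) point \(x\), so that \(Z^\beta(\{x\})=c>0\).

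\textbf{Case 1: \(x\) is not a dyadic endpoint.} For every \(m\), the cell \(Q_m(x)\) contains \(x\), so
\[
Z^\beta(Q_m(x))\geq c
\qquad\text{for all } m,
\]
and hence \(\sum_m Z^\beta(Q_m(x))=\infty\). The set \(\{x\}\) has positive \(Z^\beta\)-mass, and every \(\theta\) in it equals \(x\). This contradicts the conclusion of Theorem~\ref{thm:rooted-small-cell-summability} that the sum is finite for \(Z^\beta\)-almost every \(\theta\).

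\textbf{Case 2: \(x\) is a dyadic endpoint.} Here the argument above still works, since \(Q_m(x)\) is still the half-open cell containing \(x\). Alternatively, one can invoke the observation from the proof of Theorem~\ref{thm:rooted-small-cell-summability}: by \eqref{eq:positive-truncation-first-moment}, \(Z^\beta\) almost surely charges none of the countably many deterministic endpoints.

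\textbf{Measurability.} The exceptional event in the theorem is defined through the jointly measurable set
\[
\Bigl\{(\theta,\omega):\sum_m Z^\beta(Q_m(\theta))=\infty\Bigr\}.
\]
Its \(Z^\beta\)-mass vanishes almost surely. Moreover, \(\{\omega: Z^\beta \text{ has an atom}\}\) is contained, up to a null set, in the event that this mass is positive. No random-point measurability issue arises, because the argument runs pathwise on the full-probability event provided by the theorem.

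\textbf{Transfer to \(M_*'\).} By Proposition~\ref{prop:global-barrier-derivative-limit}, \(M_*'=Z^j\) on \(E_j\) for every integer \(j\geq1\). Each \(Z^j\) is almost surely atomless by the first part. Since \(S<\infty\) almost surely by \eqref{eq:ordinary-critical-total-mass-vanishing}, we have \(\bigcup_j E_j\) of full probability. Intersecting the countably many full-probability events gives that \(M_*'\) is almost surely atomless.

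There is no real obstacle here. The only care needed is to apply the theorem pathwise, noting that for an atom the root set \(\{x\}\) carries positive mass.
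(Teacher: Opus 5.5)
Your proposal is correct and follows the paper's proof. You apply Theorem~\ref{thm:rooted-small-cell-summability} with $p=0$, note that an atom forces $Z^\beta(Q_m(\theta))\geq Z^\beta(\{\theta\})$ for every $m$, so every atom lies in the $Z^\beta$-null exceptional set, and you transfer to $M_*'$ via $M_*'=Z^j$ on $E_j$ together with $S<\infty$. Your Case~2 split is unnecessary, since $Q_m(x)$ contains $x$ by definition for every $x\in\mathbb{T}$, as you yourself observe.
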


\begin{proof}
Apply Theorem~\ref{thm:rooted-small-cell-summability} with \(p=0\). 
If \(Z^\beta(\{\theta\})>0\), then
\[
Z^\beta(Q_m(\theta))
\geq
Z^\beta(\{\theta\})
\qquad
\text{for every }m,
\]
and hence
\[
\sum_{m=1}^{\infty}
Z^\beta(Q_m(\theta))
=
\infty.
\]
Thus, every atom belongs to the exceptional set in \eqref{eq:rooted-polynomial-small-cell-summability}, which has zero \(Z^\beta\)-mass. 
Therefore, \(Z^\beta\) has no atoms.

By Proposition~\ref{prop:global-barrier-derivative-limit}, almost surely \(M_*'=Z^j\) for some integer \(j\), so \(M_*'\) is also atomless.
\end{proof}

The next consequence of Theorem~\ref{thm:rooted-small-cell-summability} provides the moving-cell tail control needed for the annular concentration argument,
with the partition scale allowed to vary with \(N\).

\begin{proposition}[Moving cell tails]
\label{prop:moving-cell-tails}
Fix a deterministic \(\beta>0\). Let \((m_N)_{N\geq1}\) be a nondecreasing sequence of positive integers such that \( m_N/N\longrightarrow1, \) and put \( t_N=m_N\log2. \)
For every \(b>0\), almost surely,
\begin{equation}
\label{eq:realized-moving-cell-tail}
T_N^\beta(b)
:=
\sum_{I\in\mathcal{P}_{m_N}}
Z^\beta(I)
\mathbf{1}_{\{Z^\beta(I)>b/N\}}
\longrightarrow0,
\end{equation}
and
\begin{equation}
\label{eq:predictable-moving-cell-tail}
B_N^\beta(b)
:=
\sum_{I\in\mathcal{P}_{m_N}}
\mathbb{E}\left[
Z^\beta(I)
\mathbf{1}_{\{Z^\beta(I)>b/N\}}
\mid\mathcal{F}_{t_N}
\right]
\longrightarrow0.
\end{equation}
\end{proposition}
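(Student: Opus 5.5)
Both tails will be reduced to Theorem~\ref{thm:rooted-small-cell-summability} with \(p=1\) by rewriting cell masses as integrals against \(Z^\beta\) evaluated at the root.

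\textbf{Realized tail.} For every \(\theta\in I\in\mathcal{P}_{m_N}\) we have \(Q_{m_N}(\theta)=I\). Hence
\[
T_N^\beta(b)=\int_{\mathbb{T}}\mathbf{1}_{\{Z^\beta(Q_{m_N}(\theta))>b/N\}}\,Z^\beta(\mathrm{d}\theta).
\]
By Theorem~\ref{thm:rooted-small-cell-summability} with \(p=1\), for \(Z^\beta\)-a.e.\ \(\theta\) we have \(mZ^\beta(Q_m(\theta))\to0\). Since \(m_N/N\to1\) and \(m_N\to\infty\), it follows that \(NZ^\beta(Q_{m_N}(\theta))\to0\), so the indicator vanishes eventually. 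The measure \(Z^\beta\) is finite, so dominated convergence in \(\theta\) gives \(T_N^\beta(b)\to0\) almost surely.

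\textbf{Predictable tail.} Almost-sure convergence of \(T_N^\beta\) does not control its conditional expectations, so I would use a summable majorant instead. Set
\[
W=\int_{\mathbb{T}}\sum_{m\geq1}m^{2}Z^\beta(Q_m(\theta))\,Z^\beta(\mathrm{d}\theta).
\]
On the event \(\{Z^\beta(Q_{m_N}(\theta))>b/N\}\), we have \(1<(N/b)^{k}Z^\beta(Q_{m_N}(\theta))^{k}\) for any \(k\). The plan is to show that \(\sum_N T_N^\beta(b)\) has finite expectation, at least after localization. If so, then \(\sum_N B_N^\beta(b)\) has finite expectation too, because \(\mathbb{E}[B_N^\beta]=\mathbb{E}[T_N^\beta]\) for the nonnegative summands. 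Consequently \(B_N^\beta\to0\) almost surely.

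To get this finiteness, I would pass to the rooted law. Using \eqref{eq:terminal-rooted-law},
\[
\mathbb{E}[T_N^\beta]=2\pi\beta\,\Theta^\beta\bigl(Z^\beta(Q_{m_N}(\theta))>b/N\bigr).
\]
Markov's inequality on the localized event \(\mathcal{B}_R(\infty)\), together with \eqref{eq:terminal-rooted-cell-expectation}, gives
\[
\Theta^\beta\bigl(\mathcal{B}_R(\infty),\,Z^\beta(Q_{m_N}(\theta))>b/N\bigr)\leq \frac{N}{b}\,h_{\beta,R}(m_N).
\]
The bound \eqref{eq:rooted-cell-majorant-summability} with \(p=1\), combined with \(m_N\asymp N\) and \(m_N\) nondecreasing, makes \(\sum_N Nh_{\beta,R}(m_N)\) finite. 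The comparison \(m_N\asymp N\) holds only eventually, and each value of \(m_N\) is repeated at most \(O(N)\) times; these points need care, but \(h_{\beta,R}\) decays faster than any polynomial by \eqref{eq:rooted-density-majorant-moments}, which absorbs such repetitions.

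\textbf{Main obstacle.} The difficulty is that the localization \(\mathcal{B}_R(\infty)\) depends on the root, not only on the environment, so it cannot simply be pulled through \(\mathbb{E}[\,\cdot\mid\mathcal{F}_{t_N}]\). I would write
\[
T_N^\beta=T_N^{\beta,R}+T_N^{\beta,R,c},
\]
where \(T_N^{\beta,R}\) restricts the \(\theta\)-integral to roots with \((\theta,\omega)\in\mathcal{B}_R(\infty)\). I would apply the summability argument to the conditional expectation of \(T_N^{\beta,R}\), which gives almost-sure convergence to zero. The complementary part is bounded by \(\int\mathbf{1}_{\mathcal{B}_R(\infty)^c}\,\mathrm{d}Z^\beta\), whose conditional expectation is a closed martingale \(M_t^R\) converging almost surely to \(V_R:=Z^\beta(\{\theta:\mathcal{B}_R(\infty)^c\})\). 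Therefore
\[
\limsup_N B_N^\beta(b)\leq V_R\quad\text{almost surely},
\]
and \(V_R\downarrow0\) as \(R\to\infty\) by \eqref{eq:Bessel-envelope-exhaustion}. Letting \(R\to\infty\) along integers gives \(B_N^\beta(b)\to0\) almost surely.
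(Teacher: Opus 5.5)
Your realized-tail argument is the paper's. For the predictable tail you take a genuinely different route, and it is correct.

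\textbf{The paper's route.} The paper stays soft. Since $0\le T_N^\beta(b)\le Z^\beta(\mathbb{T})\in L^1$ and $T_N^\beta(b)\to0$ almost surely, the tail suprema $S_K=\sup_{N\ge K}T_N^\beta(b)$ decrease to $0$ in $L^1$. It chooses $K_j$ with $\mathbb{E}[S_{K_j}]\le 2^{-2j}$ and bounds $B_N^\beta(b)\le\mathbb{E}[S_{K_j}\mid\mathcal{F}_{t_N}]$ for $N\ge K_j$. Doob's maximal inequality along the increasing filtration $(\mathcal{F}_{t_N})_N$ and Borel--Cantelli then finish; this is essentially Hunt's lemma. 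So your opening remark, that almost-sure convergence of $T_N^\beta$ does not control its conditional expectations, is not right here. Integrable domination plus a monotone filtration is exactly what lets the realized tail alone suffice.

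\textbf{Your route.} You instead return to the rooted estimates. Markov's inequality on $\mathcal{B}_R(\infty)$ with \eqref{eq:terminal-rooted-cell-expectation} gives $\mathbb{E}[T_N^{\beta,R}]\le 2\pi\beta\,(N/b)\,h_{\beta,R}(m_N)$. Note that \eqref{eq:terminal-rooted-cell-expectation} is an intermediate bound from the proof of Theorem~\ref{thm:rooted-small-cell-summability}, not its statement. The sum over $N$ of this bound is finite once you invoke \eqref{eq:rooted-cell-majorant-summability} with $p=2$ rather than $p=1$. That is needed to absorb both the factor $N\asymp m_N$ and the $O(m)$ indices $N$ with $m_N=m$; this is what your faster-than-polynomial remark amounts to. The root-dependent remainder is handled correctly in two steps:
\begin{itemize}
\item L\'evy's upward theorem gives $\mathbb{E}[V_R\mid\mathcal{F}_{t_N}]\to V_R$. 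This uses that $V_R$ is $\sigma(\bigcup_t\mathcal{F}_t)$-measurable and that $t_N$ is nondecreasing.
\item $V_R\downarrow0$ almost surely, since $\mathcal{B}_R(\infty)$ increases in $R$, \eqref{eq:Bessel-envelope-exhaustion} holds, and monotone convergence applies.
\end{itemize}

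\textbf{Comparison.} Your version yields a quantitative, summable first-moment bound for the localized part and does not depend on the realized tail, at the cost of an extra localization layer. The paper's argument is shorter and works for any dominated, almost surely vanishing sequence adapted to an increasing filtration. The variable $W$ in your sketch is never used and can be dropped.
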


\begin{proof}
The realized tail has the exact representation
\begin{equation}
\label{eq:realized-tail-integral-representation}
T_N^\beta(b)
=
\int_{\mathbb{T}}
\mathbf{1}_{\{
Z^\beta(Q_{m_N}(\theta))>b/N
\}}
Z^\beta(\mathrm{d}\theta).
\end{equation}
Since \(N/m_N\to1\), by Theorem~\ref{thm:rooted-small-cell-summability} with \(p=1\), almost surely,
\[
NZ^\beta(Q_{m_N}(\theta))
\longrightarrow0,
\qquad
\text{for }Z^\beta\text{-almost every }\theta,
\]
which yields \eqref{eq:realized-moving-cell-tail}.

Since \(\mathcal{P}_{m_N}\) is finite,
\begin{equation}
\label{eq:predictable-tail-conditional-expectation}
B_N^\beta(b)
=
\mathbb{E}\left[
T_N^\beta(b)\mid\mathcal{F}_{t_N}
\right].
\end{equation}
Moreover,
\[
0\leq T_N^\beta(b)\leq Z^\beta(\mathbb{T}),
\]
and the latter variable is integrable.  
Hence, \( S_K := \sup_{N\geq K}T_N^\beta(b) \) decreases to zero both almost surely and in \(L^1\).  
Choose \(K_j\uparrow\infty\) so that \( \mathbb{E}[S_{K_j}]\leq2^{-2j}. \)
For \(N\geq K_j\),
\[
B_N^\beta(b)
\leq
\mathbb{E}\left[
S_{K_j}\mid\mathcal{F}_{t_N}
\right].
\]
Because \((m_N)_{N\ge 1}\) is nondecreasing, the filtration \((\mathcal{F}_{t_N})_{N\ge 1}\) is increasing. 
Doob's weak \(L^1\) inequality gives
\[
\mathbb{P}\left(
\sup_{N\geq K_j}
\mathbb{E}\left[
S_{K_j}\mid\mathcal{F}_{t_N}
\right]
>
2^{-j}
\right)
\leq
2^j\mathbb{E}[S_{K_j}]
\leq
2^{-j}.
\]
The Borel--Cantelli lemma and \eqref{eq:predictable-tail-conditional-expectation} prove \eqref{eq:predictable-moving-cell-tail}.
\end{proof}

\section{Terminal locality and one-scale annular cancellation}
\label{sec:terminal-locality-annular}

We now compare the terminal measure with its conditional expectation at one scale below a prescribed Fourier annulus. 
The compact-range structure gives independence of future fields on sufficiently separated cells. 
The only measure-theoretic point is that the terminal restriction to a cell must first be represented as a measurable function of the coarse field
and the future noise in that cell. 
Once this is done, an alternating two-color decomposition and conditional Bernstein concentration control all frequencies in the annulus simultaneously.

\subsection{Future sigma-algebras and finite-range independence}
\label{subsec:future-sigma-algebras}

For \(t\geq0\), define the future increment field
\[
W_s^{(t)}(\theta)
=
X_{t+s}(\theta)-X_t(\theta),
\qquad
s\geq0,
\quad
\theta\in\mathbb{T}.
\]
Let
\[
\mathcal{F}_t^\circ
=
\sigma\left(
X_v(\theta):
0\leq v\leq t,\ \theta\in\mathbb{T}
\right)
\]
be the raw coarse sigma-algebra, so that \(\mathcal{F}_t\) is its completion. 
For a deterministic set \(A\subset\mathbb{T}\), define
\[
\mathcal{G}_{t,A}^\circ
=
\sigma\left(
W_s^{(t)}(\theta):
s\geq0,\ \theta\in A
\right).
\]

For deterministic subsets \(A,B\subset\mathbb{T}\), write
\[
d_{\mathbb{T}}(A,B)
=
\inf\left\{
d_{\mathbb{T}}(\theta,\theta'):
\theta\in A,\ \theta'\in B
\right\}.
\]

\begin{lemma}
\label{lem:future-finite-range-independence}
Fix \(t\geq0\).
For \(s,s'\geq0\) and \(\theta,\theta'\in\mathbb{T}\),
\begin{equation}
\label{eq:future-increment-covariance}
\mathbb{E}\left[
W_s^{(t)}(\theta)W_{s'}^{(t)}(\theta')
\right]
=
\int_{e^t}^{e^{t+s\wedge s'}}
q_u(\theta-\theta')\,\frac{\mathrm{d}u}{u}.
\end{equation}
The complete future increment field is independent of \(\mathcal{F}_t^\circ\).

Let \(A_1,\ldots,A_m\subset\mathbb{T}\) be deterministic sets satisfying
\[
d_{\mathbb{T}}(A_i,A_j)>e^{-t},
\qquad
i\neq j.
\]
Then, \( \mathcal{F}_t^\circ, \mathcal{G}_{t,A_1}^\circ,\ldots, \mathcal{G}_{t,A_m}^\circ \) are jointly independent sigma-algebras.
\end{lemma}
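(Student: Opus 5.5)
The plan is to reduce everything to covariance computations, since all the sigma-algebras involved are generated by jointly Gaussian families. For jointly Gaussian families, independence is equivalent to vanishing cross-covariances.

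\textbf{Step 1: the covariance formula.} Start from \eqref{eq:periodized-scale-covariance}. Expanding the product gives
\[
\mathbb{E}\bigl[W_s^{(t)}(\theta)W_{s'}^{(t)}(\theta')\bigr]
=K_{(t+s)\wedge(t+s')}(\theta-\theta')-K_{t\wedge(t+s')}(\theta-\theta')-K_{(t+s)\wedge t}(\theta-\theta')+K_t(\theta-\theta').
\]
This equals \(K_{t+s\wedge s'}(\theta-\theta')-K_t(\theta-\theta')\). By \eqref{eq:periodized-covariance-Kt}, that difference is exactly \eqref{eq:future-increment-covariance}.

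\textbf{Step 2: independence from the coarse field.} For \(v\le t\), the covariance formula gives
\[
\mathbb{E}\bigl[X_v(\theta')W_s^{(t)}(\theta)\bigr]=K_{v}(\theta-\theta')-K_{v}(\theta-\theta')=0.
\]
The whole family \(\{X_v(\theta')\}\cup\{W_s^{(t)}(\theta)\}\) is a centered Gaussian family, being linear combinations of the jointly Gaussian field \(X\). Hence the two subfamilies are independent. Formally, every finite subcollection is a Gaussian vector with block-diagonal covariance, and independence passes from finite subcollections to the generated sigma-algebras by a \(\pi\)-\(\lambda\) argument.

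\textbf{Step 3: finite-range independence of the future pieces.} Take \(\theta\in A_i\) and \(\theta'\in A_j\) with \(i\neq j\), so that \(r=d_{\mathbb{T}}(\theta,\theta')>e^{-t}\). For \(u\ge e^t\), every term \(k(u(\theta-\theta'+2\pi\ell))\) in \eqref{eq:periodized-kernel-definition} has argument of absolute value at least \(u r>1\). This holds because \(|\theta-\theta'+2\pi\ell|\ge r\) for every lift, r being the geodesic distance. Since \(\operatorname{supp}k\subset[-1,1]\), we get \(q_u(\theta-\theta')=0\) on the whole range of integration in \eqref{eq:future-increment-covariance}, so the cross-covariance vanishes.

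\textbf{Step 4: joint independence.} Combining Steps 2 and 3, the joint Gaussian family indexed by the coarse field together with \(\{W^{(t)}(\theta):\theta\in A_i\}\), \(i=1,\dots,m\), has covariance that is block-diagonal across the \(m+1\) blocks. Joint Gaussianity then gives joint (not merely pairwise) independence of the \(m+1\) generated sigma-algebras, again by the finite-dimensional reduction plus a monotone class argument.

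I expect no serious obstacle. The only care needed is in two places. First, joint independence must be justified by using full joint Gaussianity with block-diagonal covariance, not pairwise zero covariances alone. Second, the strict inequality \(d_{\mathbb{T}}>e^{-t}\) must give \(ur>1\) for all \(u\ge e^t\) and all periodization shifts.
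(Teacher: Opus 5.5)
Your proposal is correct and follows essentially the same route as the paper. You compute the increment covariance from \eqref{eq:periodized-scale-covariance}, use \(\operatorname{supp}k\subset[-1,1]\) with \(u\,d_{\mathbb{T}}(\theta,\theta')>1\) to kill all cross-covariances, and conclude joint independence from block-diagonal finite Gaussian subfamilies followed by a monotone-class extension. The only cosmetic difference is that you verify independence from \(\mathcal{F}_t^\circ\) explicitly via \(\mathbb{E}[X_v(\theta')W_s^{(t)}(\theta)]=0\), whereas the paper simply cites the independent-increment construction.
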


\begin{proof}
The independent-increment construction of the scale field gives \eqref{eq:future-increment-covariance} and independence from \(\mathcal{F}_t^\circ\).

Fix \(i\neq j\), \(\theta\in A_i\), and \(\theta'\in A_j\). 
For every \(u\geq e^t\),
\[
u\,d_{\mathbb{T}}(\theta,\theta')
>
e^te^{-t}
=
1.
\]
For every integer \(\ell\),
\[
|\theta-\theta'+2\pi\ell|
\geq
d_{\mathbb{T}}(\theta,\theta'),
\]
and the support condition on \(k\) therefore gives
\[
q_u(\theta-\theta')
=
\sum_{\ell\in\mathbb{Z}}
k\bigl(u(\theta-\theta'+2\pi\ell)\bigr)
=
0.
\]
Thus, all cross-covariances between the future Gaussian fields on distinct \(A_i\)'s vanish. 
Every finite subfamily is jointly Gaussian with block-diagonal covariance, hence its blocks are mutually independent.
Passing from cylinder events to the generated sigma-algebras proves the claim.
\end{proof}

\subsection{Terminal locality and two-color independence}
\label{subsec:terminal-conditional-locality}

Let \(I\subset\mathbb{T}\) be a deterministic half-open arc, and let \(\overline{I}\) denote its closure. 
Set
\[
\mathcal{H}_{t,I}^\circ
=
\mathcal{F}_t^\circ
\vee
\mathcal{G}_{t,\overline{I}}^\circ.
\]

\begin{proposition}
\label{prop:finite-terminal-locality}
Fix \(\beta>0\), \(0\leq t<T\), and a deterministic half-open arc \(I\subset\mathbb{T}\). 
The restricted random measure \( \left.Z_T^\beta\right|_I \) has an \(\mathcal{H}_{t,I}^\circ\)-measurable version as a finite positive measure on \(\overline{I}\).
\end{proposition}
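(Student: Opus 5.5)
Since \(T<\infty\) is finite, we prove measurability at the level of the density \(F_T^\beta\) rather than the terminal measure. The aim is to show that the random function \(\theta\mapsto F_T^\beta(\theta)\mathbf{1}_{\overline I}(\theta)\) admits a version that is jointly measurable in \((\theta,\omega)\) with respect to \(\mathcal{B}(\overline I)\otimes\mathcal{H}_{t,I}^\circ\). Once this holds, \(A\mapsto\int_{A\cap I}F_T^\beta\,\mathrm{d}\theta\) is an \(\mathcal{H}_{t,I}^\circ\)-measurable finite positive measure. It agrees with \(Z_T^\beta|_I\) almost surely, because the densities agree for almost every \((\theta,\omega)\) and Fubini applies.

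For \(\theta\in\overline I\) and \(s\in[0,T]\), write
\[
X_s(\theta)=X_{s\wedge t}(\theta)+\mathbf{1}_{\{s>t\}}W_{s-t}^{(t)}(\theta).
\]
The first term is \(\mathcal{F}_t^\circ\)-measurable and the second is \(\mathcal{G}_{t,\overline I}^\circ\)-measurable. Each of \(R_s^\beta(\theta)\), \(e^{\sqrt2X_T(\theta)-T}\) and the survival indicator \(\mathbf{1}_{\{\tau_\theta^\beta>T\}}\) is a functional of the path \((X_s(\theta))_{0\le s\le T}\) only. So at a fixed \(\theta\) the density is \(\mathcal{H}_{t,I}^\circ\)-measurable. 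The survival indicator needs care: we rewrite \(\{\tau_\theta^\beta>T\}\) as the event that \(\sup_{s\in[0,T]\cap\mathbb{Q}}\bigl(X_s(\theta)-\sqrt2 s\bigr)\) does not exceed \(\beta\), apart from touching events. This uses the chosen jointly continuous version, so the supremum over rationals equals the supremum over all times. The boundary case where the path reaches \(\beta\) without exceeding it has probability zero for a Brownian motion and is handled at the level of versions.

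Joint measurability in \((\theta,\omega)\) is the main obstacle. The jointly continuous version is defined on the completed space, whereas \(\mathcal{H}_{t,I}^\circ\) is a raw sigma-algebra. To deal with this, we build the version directly from \(\mathcal{H}_{t,I}^\circ\)-measurable ingredients. Take the raw variables \(X_v(\theta)\) and \(W_s^{(t)}(\theta)\) at rational \(v,s\) and at \(\theta\) in a countable dense subset of \(\overline I\). Define \(\widetilde X\) as their continuous extension on the event where the restriction to the rational grid is locally uniformly continuous, and set \(\widetilde X=0\) off that event. That event lies in \(\mathcal{H}_{t,I}^\circ\) and has probability one. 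This \(\widetilde X\) is continuous in \((s,\theta)\) and \(\mathcal{H}_{t,I}^\circ\)-measurable for each \((s,\theta)\), hence jointly measurable as a Carathéodory function. It coincides with the fixed version of \(X\) on \([0,T]\times\overline I\) outside a null set.

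Defining \(\widetilde F_T^\beta\) from \(\widetilde X\) in the same way gives a density that is \(\mathcal{B}\otimes\mathcal{H}_{t,I}^\circ\)-measurable. By Fubini, \(\widetilde F_T^\beta(\theta)=F_T^\beta(\theta)\) for all \(\theta\in\overline I\) almost surely. Note that the right endpoint of the half-open arc \(I\) has Lebesgue measure zero, so passing from \(I\) to \(\overline I\) does not affect the measure.
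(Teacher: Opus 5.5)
Your proposal is correct and follows essentially the same route as the paper. Both arguments split the path at time \(t\) into the \(\mathcal{F}_t^\circ\)-measurable pre-\(t\) part and the future increments \(W^{(t)}\) on \(\overline{I}\), reduce the survival infimum to rational times, and integrate the density to obtain a measurable random measure; the paper tests against a countable dense family in \(C(\overline{I})\), whereas you use set evaluations. Your explicit \(\mathcal{B}\otimes\mathcal{H}_{t,I}^\circ\)-jointly measurable version and your remark on the null touching event \(\{\tau_\theta^\beta=T\}\) fill in steps the paper leaves implicit when it passes from measurability of \(F_T^\beta(\theta)\) at each fixed \(\theta\) to measurability of \(\int_I fF_T^\beta\,\mathrm{d}\theta\).
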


\begin{proof}
Put \(u=T-t\). 
For \(\theta\in I\) and \(0\leq v\leq u\),
\[
R_{t+v}^\beta(\theta)
=
R_t^\beta(\theta)
+
\sqrt{2}\,v
-
W_v^{(t)}(\theta).
\]
Consequently,
\begin{equation}
\label{eq:finite-terminal-local-density}
\begin{aligned}
F_T^\beta(\theta)
&=
\left(
R_t^\beta(\theta)
+
\sqrt{2}\,u
-
W_u^{(t)}(\theta)
\right)
\mathbf{1}_{\left\{
\inf_{0\leq s\leq t}
R_s^\beta(\theta)\geq0
\right\}}
\\
&\quad\times
\mathbf{1}_{\left\{
\inf_{0\leq v\leq u}
\left(
R_t^\beta(\theta)
+
\sqrt{2}\,v
-
W_v^{(t)}(\theta)
\right)
\geq0
\right\}}
\\
&\quad\times
\exp\left(
\sqrt{2}X_t(\theta)
+
\sqrt{2}W_u^{(t)}(\theta)
-
t-u
\right).
\end{aligned}
\end{equation}
The quantities \(R_t^\beta(\theta)\), \(X_t(\theta)\), and
\[
\mathbf{1}_{\left\{
\inf_{0\leq s\leq t}R_s^\beta(\theta)\geq0
\right\}}
\]
are \(\mathcal{F}_t^\circ\)-measurable.
By continuity, both infima in
\eqref{eq:finite-terminal-local-density} may be taken over rational
times in their respective intervals.

It follows that, for every \(f\in C(\overline{I})\),
\[
\int_I
f(\theta)F_T^\beta(\theta)\,\mathrm{d}\theta
\]
is \(\mathcal{H}_{t,I}^\circ\)-measurable. 
A countable uniformly dense family in \(C(\overline{I})\) determines the Borel sigma-algebra of the space of finite positive measures on \(\overline{I}\),
which proves the measure-valued assertion.
\end{proof}

The pre-\(t\) barrier event in
\eqref{eq:finite-terminal-local-density} depends on the complete
pre-\(t\) path, which explains the use of
\(\mathcal F_t^\circ\) rather than the sigma-algebra generated by
\(X_t\) alone.

\begin{proposition}
\label{prop:terminal-conditional-locality}
Fix \(\beta>0\), \(t\geq0\), and a deterministic half-open arc \(I\subset\mathbb{T}\). 
There exists a random finite measure \(\widetilde{Z}_{t,I}^\beta\) on \(\overline{I}\) such that

\begin{enumerate}
\item \(\widetilde{Z}_{t,I}^\beta\) is \(\mathcal{H}_{t,I}^\circ\)-measurable;

\item almost surely,
\[
\widetilde{Z}_{t,I}^\beta
=
\left.Z^\beta\right|_I.
\]
\end{enumerate}
\end{proposition}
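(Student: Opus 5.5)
The plan is to obtain $\widetilde{Z}_{t,I}^\beta$ as a measurable limit of the finite-horizon versions from Proposition~\ref{prop:finite-terminal-locality}. For each integer $T>t$, that proposition gives an $\mathcal{H}_{t,I}^\circ$-measurable random finite positive measure $\nu_T$ on $\overline{I}$ with $\nu_T=Z_T^\beta|_I$ almost surely. By Proposition~\ref{prop:periodized-positive-rooted-package}(i), $Z_T^\beta\to Z^\beta$ weakly almost surely. By \eqref{eq:positive-truncation-first-moment}, $Z^\beta$ almost surely does not charge the two endpoints of $I$. Hence, viewing restrictions as measures on $\overline{I}$, we have $Z_T^\beta|_{\overline I}\to Z^\beta|_{\overline I}$ weakly, and $Z_T^\beta|_I$ and $Z_T^\beta|_{\overline I}$ coincide because $Z_T^\beta$ is absolutely continuous. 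So, almost surely, $\nu_T\to Z^\beta|_{I}$ weakly as measures on $\overline{I}$.

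Next, I will fix a countable uniformly dense family $\mathcal{D}\subset C(\overline{I})$ containing $1$. I will define the event $\Omega_I$ on which $\lim_{T\to\infty}\nu_T(f)$ exists for all $f\in\mathcal{D}$ and $\sup_T \nu_T(\overline I)<\infty$. This event lies in $\mathcal{H}_{t,I}^\circ$, since each $\nu_T(f)$ is $\mathcal{H}_{t,I}^\circ$-measurable and only countably many conditions are involved. On $\Omega_I$, the functional $f\mapsto\lim\nu_T(f)$ extends by density to a positive bounded linear functional on $C(\overline I)$. By the Riesz representation theorem, this defines a finite positive measure $\widetilde Z_{t,I}^\beta$. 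Off $\Omega_I$ I set $\widetilde Z_{t,I}^\beta=0$.

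Measurability as a random measure holds because $\widetilde Z_{t,I}^\beta(f)$ is an $\mathcal{H}_{t,I}^\circ$-measurable limit for $f\in\mathcal{D}$, and these evaluations generate the Borel $\sigma$-algebra on finite measures on $\overline I$, as already used in the proof of Proposition~\ref{prop:finite-terminal-locality}. For the identification, the first paragraph shows that $\Omega_I$ has full probability and that $\widetilde Z_{t,I}^\beta(f)=Z^\beta|_I(f)$ for all $f\in\mathcal D$ off a null set. Density then gives equality of the two measures almost surely.

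The main subtlety is the null sets. Each $\nu_T$ agrees with $Z_T^\beta|_I$ only outside a null set that need not lie in the raw $\sigma$-algebra $\mathcal{H}_{t,I}^\circ$. This is why the limit must be built from the $\nu_T$ themselves, using the raw-measurable event $\Omega_I$, and not from $Z_T^\beta$. The countable union of exceptional null sets then enters only the almost-sure identification (ii), never the measurability claim (i).
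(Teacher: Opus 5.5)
Your proposal is correct and follows essentially the same route as the paper: you define $\widetilde Z_{t,I}^\beta$ as the weak limit of the $\mathcal{H}_{t,I}^\circ$-measurable finite-horizon restrictions $Z_T^\beta|_I$ on the (raw-measurable) convergence event and as $0$ elsewhere. The differences are minor. You obtain $Z^\beta(\partial I)=0$ directly from the first-moment identity \eqref{eq:positive-truncation-first-moment}, whereas the paper cites Corollary~\ref{cor:atomlessness}. You also construct the limit by hand through a countable dense family and the Riesz representation theorem, where the paper cites Borel measurability of the set of convergent sequences and of the limit map in a Polish space.
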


\begin{proof}
Let \( T_j=t+j, \) for  \( j\geq1, \) and regard \( \mu_j = \left.Z_{T_j}^\beta\right|_I \) as a random finite measure on \(\overline{I}\).  
By Proposition~\ref{prop:finite-terminal-locality}, every \(\mu_j\) is \(\mathcal{H}_{t,I}^\circ\)-measurable.

For every finite \(T_j\), the measure \(Z_{T_j}^\beta\) is absolutely continuous. 
By Corollary~\ref{cor:atomlessness},
\[
Z^\beta(\partial I)=0
\qquad
\text{almost surely}.
\]
Hence, on the event on which \( Z_{T_j}^\beta\xrightarrow{\mathrm{w}}Z^\beta, \) one has
\[
\mu_j\xrightarrow{\mathrm{w}}\left.Z^\beta\right|_I
\]
as measures on \(\overline{I}\). 
Indeed, if \(f\in C(\overline{I})\), extend \(f\) continuously to \(\mathbb{T}\). 
The bounded function \(f\mathbf{1}_I\) is continuous outside \(\partial I\), a \(Z^\beta\)-null set, and therefore
\[
\int_I f\,\mathrm{d}Z_{T_j}^\beta
\longrightarrow
\int_I f\,\mathrm{d}Z^\beta.
\]

The space of finite positive measures on \(\overline{I}\), equipped with the weak topology, is Polish. 
Define
\[
\widetilde{Z}_{t,I}^\beta
=
\begin{cases}
\displaystyle\lim_{j\to\infty}\mu_j,
&\text{if the weak limit exists},\\[1ex]
0,
&\text{otherwise}.
\end{cases}
\]
The set of convergent sequences in a Polish space is Borel, and the limit map is Borel on that set.  
Thus, \(\widetilde{Z}_{t,I}^\beta\) is \(\mathcal{H}_{t,I}^\circ\)-measurable and agrees almost surely with \(\left.Z^\beta\right|_I\).
\end{proof}

Throughout the remainder of this section, \(\beta>0\) denotes a fixed deterministic parameter unless explicitly quantified otherwise.

\begin{proposition}
\label{prop:separated-terminal-conditional-independence}
Fix \(t\geq0\). 
Let \(I_1,\ldots,I_m\) be deterministic half-open arcs such that
\[
d_{\mathbb{T}}(\overline{I_i},\overline{I_j})>e^{-t},
\qquad
i\neq j.
\]
For bounded Borel functionals \(H_j\) on the corresponding spaces of finite measures,
\begin{equation}
\label{eq:terminal-restriction-factorization}
\mathbb{E}\left[
\prod_{j=1}^m
H_j\left(
\left.Z^\beta\right|_{I_j}
\right)
\middle|
\mathcal{F}_t
\right]
=
\prod_{j=1}^m
\mathbb{E}\left[
H_j\left(
\left.Z^\beta\right|_{I_j}
\right)
\middle|
\mathcal{F}_t
\right]
\qquad
\text{almost surely}.
\end{equation}
\end{proposition}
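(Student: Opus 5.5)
The plan is to combine the measurable representation of Proposition~\ref{prop:terminal-conditional-locality} with the joint independence of Lemma~\ref{lem:future-finite-range-independence}, using the standard \emph{freezing lemma} for conditional expectations. For each \(j\), Proposition~\ref{prop:terminal-conditional-locality} gives an \(\mathcal{H}_{t,I_j}^\circ\)-measurable version \(\widetilde{Z}_{t,I_j}^\beta\) of \(Z^\beta|_{I_j}\). Since almost-sure modifications do not affect conditional expectations with respect to the completed \(\sigma\)-algebra \(\mathcal{F}_t\), it suffices to prove \eqref{eq:terminal-restriction-factorization} with \(Z^\beta|_{I_j}\) replaced by \(\widetilde{Z}_{t,I_j}^\beta\).

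By a Doob--Dynkin argument, each \(H_j(\widetilde{Z}_{t,I_j}^\beta)\) is a bounded function of the pair \((\mathcal{F}_t^\circ\text{-data},\ \mathcal{G}_{t,\overline{I_j}}^\circ\text{-data})\). Concretely, I would realize the coarse data as a random element \(\Xi\) of a standard Borel space, namely the restriction of \(X\) to \([0,t]\times\mathbb{T}\), which is continuous by the chosen version. I would realize the future data as random elements \(\Omega_j\), namely \(W^{(t)}\) on \([0,\infty)\times\overline{I_j}\), which are continuous paths restricted to a compact set and hence Polish-valued. This gives Borel maps \(\Phi_j\) with
\[
H_j(\widetilde{Z}_{t,I_j}^\beta)=\Phi_j(\Xi,\Omega_j).
\]
The factorization through \(\Xi\) and \(\Omega_j\) holds because \(\mathcal{F}_t^\circ=\sigma(\Xi)\) and \(\mathcal{G}^\circ_{t,\overline{I_j}}=\sigma(\Omega_j)\), by continuity of the version.

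The hypothesis \(d_{\mathbb{T}}(\overline{I_i},\overline{I_j})>e^{-t}\) is exactly what Lemma~\ref{lem:future-finite-range-independence} requires with \(A_j=\overline{I_j}\). That lemma then gives joint independence of \(\Xi,\Omega_1,\dots,\Omega_m\). The freezing lemma yields, almost surely,
\[
\mathbb{E}\Bigl[\prod_j\Phi_j(\Xi,\Omega_j)\Bigm|\mathcal{F}_t\Bigr]=\psi(\Xi),\qquad \psi(x)=\mathbb{E}\Bigl[\prod_j\Phi_j(x,\Omega_j)\Bigr]=\prod_j\mathbb{E}[\Phi_j(x,\Omega_j)].
\]
The second equality uses independence of the \(\Omega_j\) among themselves. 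Applying the freezing lemma to each factor separately gives \(\mathbb{E}[\Phi_j(\Xi,\Omega_j)\mid\mathcal{F}_t]=\psi_j(\Xi)\) with \(\psi_j(x)=\mathbb{E}[\Phi_j(x,\Omega_j)]\), so that \(\psi=\prod_j\psi_j\). This is \eqref{eq:terminal-restriction-factorization}.

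The main obstacle is the measurability bookkeeping. One must ensure that \(\widetilde{Z}^\beta_{t,I_j}\) is a function of \(\Xi\) and \(\Omega_j\) jointly through a single Borel map, and not merely measurable with respect to the join \(\mathcal{H}^\circ_{t,I_j}\). Doob--Dynkin handles this once \(\mathcal{H}^\circ_{t,I_j}=\sigma(\Xi,\Omega_j)\) is identified with the \(\sigma\)-algebra of a standard-Borel-valued random element. One must also pass correctly between the raw \(\sigma\)-algebra \(\mathcal{F}_t^\circ\) and its completion \(\mathcal{F}_t\), which only changes conditional expectations on null sets. A fallback, if the Polish-space realization is inconvenient, is a monotone class argument. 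First verify the identity for \(\Phi_j(\Xi,\Omega_j)=g_j(\Xi)f_j(\Omega_j)\), where it is immediate from joint independence. Then extend to bounded \(\sigma(\Xi,\Omega_j)\)-measurable functions factor by factor, using that the class of \(\Phi_j\) for which the identity holds, with the other factors fixed, is a monotone vector space.
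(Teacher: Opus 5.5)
Your proposal is correct and follows the paper's proof: take the \(\mathcal{H}_{t,I_j}^\circ\)-measurable versions from Proposition~\ref{prop:terminal-conditional-locality}, use the joint independence of \(\mathcal{F}_t^\circ,\mathcal{G}_{t,\overline{I_1}}^\circ,\ldots,\mathcal{G}_{t,\overline{I_m}}^\circ\) from Lemma~\ref{lem:future-finite-range-independence}, factorize conditionally on \(\mathcal{F}_t^\circ\), and then pass to the completion and to the almost-sure versions. The only difference is that you do the conditional factorization via Doob--Dynkin and the freezing lemma, while the paper uses your fallback (products of indicators extended by a monotone-class argument); one small correction: \([0,\infty)\times\overline{I_j}\) is not compact, but \(C([0,\infty)\times\overline{I_j})\) with locally uniform convergence is still Polish, with Borel \(\sigma\)-algebra generated by evaluations, so your realization goes through.
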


\begin{proof}
By Proposition~\ref{prop:terminal-conditional-locality}, choose versions \(\widetilde{Z}_{t,I_j}^\beta\) measurable with respect to \( \mathcal{F}_t^\circ \vee \mathcal{G}_{t,\overline{I_j}}^\circ. \)
Lemma~\ref{lem:future-finite-range-independence} and the standard conditional factorization yield
\[
\mathbb{E}\left[
\prod_{j=1}^m
H_j\left(
\widetilde{Z}_{t,I_j}^\beta
\right)
\middle|
\mathcal{F}_t^\circ
\right]
=
\prod_{j=1}^m
\mathbb{E}\left[
H_j\left(
\widetilde{Z}_{t,I_j}^\beta
\right)
\middle|
\mathcal{F}_t^\circ
\right].
\]
This follows first for products of indicators and then for bounded Borel functionals by a monotone-class argument.
Passing to the completion of the coarse sigma-algebra does not change the identity.
Since
\[
\widetilde{Z}_{t,I_j}^\beta
=
\left.Z^\beta\right|_{I_j}
\qquad
\text{almost surely},
\]
\eqref{eq:terminal-restriction-factorization} follows.
\end{proof}

We now specialize Proposition~\ref{prop:separated-terminal-conditional-independence} to the dyadic partition \(\mathcal{P}_r\) used in the annular argument.
For \(r\geq1\), define
\[
\mathcal{C}_{r,0}
=
\left\{
I_j^{(r)}:
j\equiv0\pmod2
\right\},
\qquad
\mathcal{C}_{r,1}
=
\left\{
I_j^{(r)}:
j\equiv1\pmod2
\right\}.
\]
If \(I,J\) are distinct cells in the same parity class, then
\[
d_{\mathbb{T}}(\overline{I},\overline{J})
\geq
2\pi2^{-r}.
\]
The number \(2^r\) of cells is even, so the two cells meeting at the circular seam have opposite parity.

We therefore obtain the following corollary.

\begin{corollary}
\label{cor:two-color-fourier-mass-independence}
Fix \(r\geq1\), put \(t=r\log2\), and choose \(\epsilon\in\{0,1\}\).
For \(n\in\mathbb{Z}\) and \(I\in\mathcal{P}_r\), define
\[
V_I(n)
=
\int_I e^{in\theta}\,Z^\beta(\mathrm{d}\theta).
\]
For each \(n\in\mathbb{Z}\), conditionally on \(\mathcal{F}_t\), the random vectors
\[
\left(
V_I(n),Z^\beta(I)
\right),
\qquad
I\in\mathcal{C}_{r,\epsilon},
\]
are mutually independent.
In particular, for every deterministic \(\tau>0\), so are
\[
V_I(n)
\mathbf{1}_{\{Z^\beta(I)\leq\tau\}},
\qquad
I\in\mathcal{C}_{r,\epsilon}.
\]
\end{corollary}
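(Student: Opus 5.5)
The plan is to derive this corollary directly from Proposition~\ref{prop:separated-terminal-conditional-independence}, applied with \(t=r\log 2\) and with the arcs given by the cells of \(\mathcal{C}_{r,\epsilon}\).

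First I check the separation hypothesis. By the remark just before the corollary, any two distinct cells \(I,J\) in the same parity class satisfy \(d_{\mathbb{T}}(\overline{I},\overline{J})\geq 2\pi 2^{-r}\). The reason is that between any two same-parity cells there is a full cell of the opposite parity. At the circular seam, cells \(2^r-1\) and \(0\) have opposite parity because \(2^r\) is even. Since \(e^{-t}=2^{-r}\) and \(2\pi>1\), we get
\[
d_{\mathbb{T}}(\overline{I},\overline{J})\geq 2\pi2^{-r}>2^{-r}=e^{-t}.
\]
This is the hypothesis of Proposition~\ref{prop:separated-terminal-conditional-independence}, strictly.

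Next I express the vectors as functionals of the restrictions. For a fixed \(n\), the map
\[
\mu\mapsto\Bigl(\int_{\overline{I}}e^{in\theta}\,\mu(\mathrm{d}\theta),\ \mu(\overline{I})\Bigr)
\]
on finite positive measures on \(\overline{I}\) is continuous for the weak topology, hence Borel. Applied to \(\left.Z^\beta\right|_I\) it returns \((V_I(n),Z^\beta(I))\). The measure \(\left.Z^\beta\right|_I\) lives on \(I\subset\overline{I}\), so integrating over \(\overline{I}\) agrees with integrating over \(I\).

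Then I apply the factorization. For bounded Borel functions \(g_I\) on \(\mathbb{C}\times\mathbb{R}\), set \(H_I=g_I\circ(\text{this map})\). Proposition~\ref{prop:separated-terminal-conditional-independence} gives
\[
\mathbb{E}\Bigl[\prod_I g_I\bigl(V_I(n),Z^\beta(I)\bigr)\,\Big|\,\mathcal{F}_t\Bigr]=\prod_I\mathbb{E}\bigl[g_I\bigl(V_I(n),Z^\beta(I)\bigr)\,\big|\,\mathcal{F}_t\bigr]
\]
almost surely. Taking the \(g_I\) to be indicators of Borel sets gives conditional mutual independence of the vectors given \(\mathcal{F}_t\). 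The family \(\mathcal{C}_{r,\epsilon}\) is finite, so no countability issue arises.

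Finally, for a deterministic \(\tau>0\), the quantity \(V_I(n)\mathbf{1}_{\{Z^\beta(I)\leq\tau\}}\) is a Borel function of the vector \((V_I(n),Z^\beta(I))\). Conditional independence is preserved under applying deterministic Borel maps to each component, which gives the second assertion.

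No step is a real obstacle. The only point needing care is that the separation bound is strict, including across the seam, and that is handled by the parity observation above.
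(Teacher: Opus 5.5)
Your proof is correct and follows the paper's route. The paper obtains the corollary directly from Proposition~\ref{prop:separated-terminal-conditional-independence} with $t=r\log2$, using the parity separation $d_{\mathbb{T}}(\overline{I},\overline{J})\geq 2\pi2^{-r}>e^{-t}$ (which also holds across the seam because $2^r$ is even); you simply spell out what the paper leaves implicit, namely that $(V_I(n),Z^\beta(I))$ is a weakly continuous functional of $Z^\beta|_I$, the use of indicator test functions, and the preservation of conditional independence under componentwise Borel maps.
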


\subsection{Uniform annular approximation}
\label{subsec:uniform-annular-approximation}

We now apply Corollary~\ref{cor:two-color-fourier-mass-independence} at a coarse level just below the Fourier scale.  
For all sufficiently large integers \(N\), define
\[
\Lambda_N
=
\left\{
n\in\mathbb{Z}:
2^N\leq|n|<2^{N+1}
\right\},
\qquad
r_N
=
N-\left\lceil6\log_2N\right\rceil,
\qquad
t_N=r_N\log2.
\]
Choose \(N_0\) so large that \(r_N\ge 1\) for every \(N\ge N_0\) and \((r_N)_{N\ge N_0}\) is nondecreasing.  
Then,
\[
\frac{r_N}{N}\longrightarrow1.
\]

For \(I\in\mathcal{P}_{r_N}\) and \(n\in\mathbb{Z}\), let \(V_I(n)\) be as in Corollary~\ref{cor:two-color-fourier-mass-independence}.
\eqref{eq:positive-truncation-martingale-closure} in Proposition~\ref{prop:periodized-positive-rooted-package} yields
\begin{equation}
\label{eq:cell-conditional-mean}
a_I(n)
:=
\mathbb{E}\left[
V_I(n)\mid\mathcal{F}_{t_N}
\right]
=
\int_Ie^{in\theta}\,
Z_{t_N}^\beta(\mathrm{d}\theta).
\end{equation}
For \(b>0\), define
\[
A_I(n)
=
V_I(n)
\mathbf{1}_{\{Z^\beta(I)\leq b/N\}},
\]
and
\begin{equation}
\label{eq:capped-centered-cell-variable}
Y_I(n)
=
A_I(n)
-
\mathbb{E}\left[
A_I(n)\mid\mathcal{F}_{t_N}
\right].
\end{equation}

\begin{lemma}
\label{lem:annular-capped-cell-bounds}
For every \(N\geq N_0\), \(n\in\mathbb{Z}\), and \(I\in\mathcal{P}_{r_N}\),
\begin{equation}
\label{eq:capped-centered-uniform-bound}
|Y_I(n)|
\leq
\frac{2b}{N}.
\end{equation}
Moreover,
\begin{equation}
\label{eq:capped-centered-second-moment-bound}
\mathbb{E}\left[
|Y_I(n)|^2
\middle|
\mathcal{F}_{t_N}
\right]
\leq
\frac{b}{N}Z_{t_N}^\beta(I).
\end{equation}
Inside either parity class, the variables \(Y_I(n)\) are conditionally independent given \(\mathcal{F}_{t_N}\).
\end{lemma}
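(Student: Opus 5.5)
The three assertions are elementary consequences of the definitions, and I will prove them in the order stated.

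\emph{Uniform bound.} Since $|e^{in\theta}|=1$, we have $|V_I(n)|\le Z^\beta(I)$. On the event $\{Z^\beta(I)\le b/N\}$ this gives $|A_I(n)|\le Z^\beta(I)\mathbf 1_{\{Z^\beta(I)\le b/N\}}\le b/N$, and off that event $A_I(n)=0$. Conditional expectation is a contraction in $L^\infty$, so $|\mathbb E[A_I(n)\mid\mathcal F_{t_N}]|\le b/N$ almost surely. The triangle inequality then yields \eqref{eq:capped-centered-uniform-bound}.

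\emph{Conditional second moment.} A conditional variance never exceeds the conditional second moment. This holds for complex variables with $|\cdot|^2$, by expanding $\mathbb E[|A-\mathbb E[A\mid\mathcal G]|^2\mid\mathcal G]=\mathbb E[|A|^2\mid\mathcal G]-|\mathbb E[A\mid\mathcal G]|^2$. Hence
\[
\mathbb E\bigl[|Y_I(n)|^2\mid\mathcal F_{t_N}\bigr]\le \mathbb E\bigl[|A_I(n)|^2\mid\mathcal F_{t_N}\bigr].
\]
Pointwise, $|A_I(n)|^2\le Z^\beta(I)^2\mathbf 1_{\{Z^\beta(I)\le b/N\}}\le \frac bN Z^\beta(I)$. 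Taking conditional expectations, the martingale closure \eqref{eq:positive-truncation-martingale-closure} with $f=\mathbf 1_I$ gives $\mathbb E[Z^\beta(I)\mid\mathcal F_{t_N}]=Z^\beta_{t_N}(I)$. This proves \eqref{eq:capped-centered-second-moment-bound}.

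\emph{Conditional independence.} Corollary~\ref{cor:two-color-fourier-mass-independence} with $r=r_N$ and $t=t_N=r_N\log 2$ shows that, within a parity class, the variables $A_I(n)=V_I(n)\mathbf 1_{\{Z^\beta(I)\le b/N\}}$ are conditionally independent given $\mathcal F_{t_N}$. Each $Y_I(n)$ is obtained from $A_I(n)$ by subtracting an $\mathcal F_{t_N}$-measurable quantity. Conditional independence is preserved under such shifts; one checks this through the factorization \eqref{eq:terminal-restriction-factorization} applied to bounded functionals $H_j(A_{I_j}-c_j)$ with $c_j$ fixed, then substitutes $\mathcal F_{t_N}$-measurable $c_j$ by a standard monotone-class argument. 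So the $Y_I(n)$ inherit conditional independence.

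The only mildly delicate point is this last substitution. It is routine because the conditional means are bounded and $\mathcal F_{t_N}$-measurable, so one may approximate them by simple $\mathcal F_{t_N}$-measurable functions and pull the resulting indicators out of the conditional expectation.
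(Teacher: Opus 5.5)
Your proof is correct and follows the same route as the paper: the sup bound from $|V_I(n)|\le Z^\beta(I)$ and the $L^\infty$-contractivity of conditional expectation, the conditional second moment via $|Y_I(n)|^2$-variance $\le$ second moment followed by martingale closure with $f=\mathbf 1_I$, and conditional independence from Corollary~\ref{cor:two-color-fourier-mass-independence}. You merely make explicit two steps the paper leaves implicit: the conditional-variance identity, and the fact that conditional independence survives subtraction of $\mathcal F_{t_N}$-measurable conditional means.
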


\begin{proof}
Since \( |V_I(n)|\leq Z^\beta(I), \) one has
\[
|A_I(n)|\leq\frac{b}{N}.
\]
The same bound holds for the absolute value of its conditional expectation, which gives \eqref{eq:capped-centered-uniform-bound}.
As for \eqref{eq:capped-centered-second-moment-bound},
\begin{equation*}
\mathbb{E}\left[
|Y_I(n)|^2
\middle|
\mathcal{F}_{t_N}
\right]
\leq
\mathbb{E}\left[
Z^\beta(I)^2
\mathbf{1}_{\{Z^\beta(I)\leq b/N\}}
\middle|
\mathcal{F}_{t_N}
\right]
\leq
\frac{b}{N}Z_{t_N}^\beta(I).
\end{equation*}
Conditional independence follows from Corollary~\ref{cor:two-color-fourier-mass-independence}.
\end{proof}

For \(\epsilon\in\{0,1\}\), define
\[
S_{N,\epsilon}(n)
=
\sum_{I\in\mathcal{C}_{r_N,\epsilon}}Y_I(n).
\]
Then,
\[
\sum_{I\in\mathcal{P}_{r_N}}Y_I(n)
=
S_{N,0}(n)+S_{N,1}(n).
\]

\begin{lemma}
\label{lem:centered-annular-concentration}
Fix \(\eta>0\) and \(K\geq1\). 
There exists \(b=b(\eta,K)>0\) such that, with \(Y_I(n)\) defined using this value of \(b\),
\begin{equation}
\label{eq:centered-annular-probability-summability}
\sum_{N=N_0}^{\infty}
\mathbb{P}\biggl(
\max_{n\in\Lambda_N}
\left|
\sum_{I\in\mathcal{P}_{r_N}}Y_I(n)
\right|>\eta,
Z_{t_N}^\beta(\mathbb{T})\leq K
\biggr)
<
\infty.
\end{equation}
Consequently, on
\[
\Omega_{\beta,K}
:=
\left\{
\sup_{N\geq N_0}
Z_{t_N}^\beta(\mathbb{T})
\leq K
\right\},
\]
one has
\begin{equation}
\label{eq:centered-annular-limsup}
\limsup_{N\to\infty}
\max_{n\in\Lambda_N}
\left|
\sum_{I\in\mathcal{P}_{r_N}}Y_I(n)
\right|
\leq
\eta
\qquad
\text{almost surely}.
\end{equation}
\end{lemma}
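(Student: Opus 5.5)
The plan is to apply a conditional Bernstein inequality separately to each parity class and to the real and imaginary parts. A union bound over the $|\Lambda_N|\le 2^{N+2}$ frequencies will then produce a summable series. Fix $N\ge N_0$, $n\in\Lambda_N$, and $\epsilon\in\{0,1\}$. By Lemma~\ref{lem:annular-capped-cell-bounds}, conditionally on $\mathcal{F}_{t_N}$ the variables $\operatorname{Re}Y_I(n)$, $I\in\mathcal{C}_{r_N,\epsilon}$, have the following properties: they are independent and centered, bounded by $2b/N$, and their total conditional variance is at most $\frac bN Z_{t_N}^\beta(\mathbb{T})$. The same holds for $\operatorname{Im}Y_I(n)$. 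Since $Z_{t_N}^\beta(\mathbb{T})$ is $\mathcal{F}_{t_N}$-measurable, Bernstein's inequality applied conditionally gives, on $\{Z_{t_N}^\beta(\mathbb{T})\le K\}$,
\[
\mathbb{P}\Bigl(\bigl|\operatorname{Re}S_{N,\epsilon}(n)\bigr|>\tfrac{\eta}{4}\Bigm|\mathcal{F}_{t_N}\Bigr)
\le 2\exp\Bigl(-\frac{(\eta/4)^2/2}{bK/N+\tfrac{2b}{3N}\cdot\tfrac{\eta}{4}}\Bigr)
=2\exp\Bigl(-\frac{c_{\eta,K}}{b}\,N\Bigr),
\]
where $c_{\eta,K}=\frac{\eta^2/32}{K+\eta/6}>0$. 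The same bound holds for the imaginary part.

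To pass to the unconditional probability, multiply by $\mathbf{1}_{\{Z_{t_N}^\beta(\mathbb{T})\le K\}}$ and take expectations. Then sum over the two parities, the real and imaginary parts, and the frequencies $n\in\Lambda_N$. Since $|\sum_I Y_I(n)|\le\sum_{\epsilon}(|\operatorname{Re}S_{N,\epsilon}|+|\operatorname{Im}S_{N,\epsilon}|)$, this yields
\[
\mathbb{P}\Bigl(\max_{n\in\Lambda_N}\Bigl|\sum_{I}Y_I(n)\Bigr|>\eta,\ Z_{t_N}^\beta(\mathbb{T})\le K\Bigr)
\le 16\cdot 2^{N+2}\exp\Bigl(-\frac{c_{\eta,K}}{b}N\Bigr).
\]
We then choose $b$ so small that $c_{\eta,K}/b\ge 2\log 2$. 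The right-hand side becomes $O(2^{-N})$, which is summable, and this proves \eqref{eq:centered-annular-probability-summability}.

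For \eqref{eq:centered-annular-limsup}, note that $\Omega_{\beta,K}\subset\{Z_{t_N}^\beta(\mathbb{T})\le K\}$ for every $N$. The Borel--Cantelli lemma then shows that, almost surely on $\Omega_{\beta,K}$, only finitely many $N$ have $\max_{n\in\Lambda_N}|\sum_I Y_I(n)|>\eta$, which gives the limsup bound.

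The only delicate point is the legitimacy of the conditional Bernstein step. It requires that, given $\mathcal{F}_{t_N}$, the $Y_I(n)$ in one class are genuinely independent, with conditional means and variances as stated. Lemma~\ref{lem:annular-capped-cell-bounds} supplies this through Corollary~\ref{cor:two-color-fourier-mass-independence}: the conditional means $\mathbb{E}[A_I(n)\mid\mathcal{F}_{t_N}]$ are $\mathcal{F}_{t_N}$-measurable shifts, so they preserve conditional independence. One can make this rigorous either with regular conditional distributions of the finite vector $(A_I(n))_I$ given $\mathcal{F}_{t_N}$, or by running the exponential-moment (Chernoff) argument directly. In the Chernoff version, $\mathbb{E}[\prod_I e^{\lambda\operatorname{Re}Y_I}\mid\mathcal{F}_{t_N}]$ factorizes by the conditional independence, and each factor is bounded by $\exp\bigl(\lambda^2\mathbb{E}[(\operatorname{Re}Y_I)^2\mid\mathcal{F}_{t_N}]\,g(\lambda b/N)\bigr)$ using the uniform bound. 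Note that $b$ depends only on $(\eta,K)$ and not on $N$, and the growth $2^N$ of the frequency count is beaten because the Bernstein exponent is linear in $N$ with a constant tunable through $b$.
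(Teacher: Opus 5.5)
Your proposal is correct and is essentially the paper's proof. Both arguments apply conditional Bernstein on each parity class $\mathcal{C}_{r_N,\epsilon}$ using the bounds of Lemma~\ref{lem:annular-capped-cell-bounds}, take a union bound over the $2^{N+1}$ frequencies in $\Lambda_N$, choose $b$ small enough that the exponent exceeds $2N\log 2$, and finish with Borel--Cantelli via $\Omega_{\beta,K}\subset\{Z_{t_N}^\beta(\mathbb{T})\le K\}$. The only difference is cosmetic: you split into real and imaginary parts and use the classical real Bernstein inequality with explicit constants, which costs a harmless constant factor in the union bound, whereas the paper cites a ready-made conditional Bernstein lemma for bounded complex variables from the Mandelbrot-cascade Rajchman paper.
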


\begin{proof}
Let \(C,c>0\) be the absolute constants in \cite[Lemma~3.10]{CaiChengFangLiQuXiao2026Rajchman}.
If
\[
\left|
S_{N,0}(n)+S_{N,1}(n)
\right|>\eta,
\]
then \( |S_{N,\epsilon}(n)|>\eta/2 \) for at least one \(\epsilon\in\{0,1\}\).

Fix \(\epsilon\in\{0,1\}\).
By Lemma~\ref{lem:annular-capped-cell-bounds}, we may apply \cite[Lemma~3.10]{CaiChengFangLiQuXiao2026Rajchman} with
\[
R=\frac{2b}{N},
\qquad
V=\frac{b}{N}Z_{t_N}^\beta(\mathbb{T}),
\qquad
t=\frac{\eta}{2},
\]
which yields
\[
\mathbb{P}\left(
|S_{N,\epsilon}(n)|>\frac{\eta}{2}
\middle|
\mathcal{F}_{t_N}
\right)
\leq
C\exp\left(
-\frac{c\eta^2N}
{4b\bigl(Z_{t_N}^\beta(\mathbb{T})+\eta\bigr)}
\right)
\]
almost surely.
Therefore, on \( \{Z_{t_N}^\beta(\mathbb{T})\leq K\}, \)
\begin{equation}
\label{eq:single-frequency-centered-bound}
\mathbf{1}_{\{Z_{t_N}^\beta(\mathbb{T})\leq K\}}
\mathbb{P}\left(
\left|
\sum_{I\in\mathcal{P}_{r_N}}Y_I(n)
\right|>\eta
\middle|
\mathcal{F}_{t_N}
\right)
\leq
2C\exp\left(
-\frac{c\eta^2N}
{4b(K+\eta)}
\right).
\end{equation}

Since \( \#\Lambda_N=2^{N+1}, \) a conditional union bound over \(n\in\Lambda_N\),
followed by taking expectations and using \( \{Z_{t_N}^\beta(\mathbb{T})\leq K\}\in\mathcal{F}_{t_N}, \) gives
\begin{equation}
\label{eq:annular-centered-unconditional-bound}
\mathbb{P}\left(
\max_{n\in\Lambda_N}
\left|
\sum_{I\in\mathcal{P}_{r_N}}Y_I(n)
\right|>\eta,\
Z_{t_N}^\beta(\mathbb{T})\leq K
\right)
\leq
C2^{N+2}
\exp\left(
-\frac{c\eta^2N}
{4b(K+\eta)}
\right).
\end{equation}
Choose \(b=b(\eta,K)>0\) sufficiently small that
\[
\frac{c\eta^2}{4b(K+\eta)}
>
2\log 2.
\]
Then, the right-hand side of \eqref{eq:annular-centered-unconditional-bound} is summable in \(N\), proving \eqref{eq:centered-annular-probability-summability}.
The Borel--Cantelli lemma gives \eqref{eq:centered-annular-limsup} on \(\Omega_{\beta,K}\).
\end{proof}

The event used inside the conditional estimate is \( \{Z_{t_N}^\beta(\mathbb{T})\leq K\} \in\mathcal{F}_{t_N}. \)
The pathwise event \(\Omega_{\beta,K}\) is introduced only after the unconditional summability estimate. 
Moreover,
\[
Z_{t_N}^\beta(\mathbb{T})
\longrightarrow
Z^\beta(\mathbb{T})
\qquad
\text{almost surely},
\]
and hence
\begin{equation}
\label{eq:mass-localization-exhaustion}
\mathbb{P}\left(
\bigcup_{K=1}^{\infty}
\Omega_{\beta,K}
\right)
=
1.
\end{equation}

\begin{lemma}
\label{lem:exact-capped-remainder}
Fix \(b>0\), and define \(Y_I(n)\) by \eqref{eq:capped-centered-cell-variable}.  
Then,
\begin{equation}
\label{eq:uniform-capped-remainder-vanishing}
\sup_{n\in\mathbb{Z}}
\left|
\sum_{I\in\mathcal{P}_{r_N}}
\left(
V_I(n)-a_I(n)-Y_I(n)
\right)
\right|
\longrightarrow0
\end{equation}
almost surely.
\end{lemma}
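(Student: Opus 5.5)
We write the remainder explicitly. Since \(A_I(n)=V_I(n)\mathbf{1}_{\{Z^\beta(I)\le b/N\}}\), we have
\[
V_I(n)-a_I(n)-Y_I(n)
=
V_I(n)\mathbf{1}_{\{Z^\beta(I)>b/N\}}
-
\mathbb{E}\left[
V_I(n)\mathbf{1}_{\{Z^\beta(I)>b/N\}}
\middle|
\mathcal{F}_{t_N}
\right].
\]
The key point is that the conditional mean of \(A_I(n)+\bigl(V_I(n)-A_I(n)\bigr)\) equals \(a_I(n)\) by \eqref{eq:cell-conditional-mean}. The bound \(|V_I(n)|\le Z^\beta(I)\) holds for every \(n\). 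Conditional Jensen then gives \(\bigl|\mathbb{E}[W\mid\mathcal F_{t_N}]\bigr|\le\mathbb{E}[|W|\mid\mathcal F_{t_N}]\). Combining these, for every \(n\in\mathbb{Z}\), uniformly,
\[
\left|
\sum_{I\in\mathcal{P}_{r_N}}
\bigl(V_I(n)-a_I(n)-Y_I(n)\bigr)
\right|
\le
T_N^\beta(b)+B_N^\beta(b),
\]
where \(T_N^\beta(b)\) and \(B_N^\beta(b)\) are defined in Proposition~\ref{prop:moving-cell-tails} with \(m_N=r_N\).

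We then apply Proposition~\ref{prop:moving-cell-tails} with \(m_N=r_N\) for \(N\ge N_0\). The sequence can be extended arbitrarily, say constantly equal to \(r_{N_0}\), for \(N<N_0\); this keeps it nondecreasing and positive, and \(r_N/N\to1\). Moreover \(t_N=r_N\log2\) matches the time used there. That proposition gives \(T_N^\beta(b)\to0\) and \(B_N^\beta(b)\to0\) almost surely, which proves \eqref{eq:uniform-capped-remainder-vanishing}. The supremum over \(n\) causes no trouble, because the bound above does not depend on \(n\).

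The only step needing care is the measurability and version issue in the conditional expectations. The conditional expectation of the complex variable \(V_I(n)\mathbf{1}_{\{\cdot\}}\) must be dominated by that of \(Z^\beta(I)\mathbf{1}_{\{\cdot\}}\) on a single almost-sure event for all \(n\) simultaneously. Since \(\mathbb{Z}\) and \(\mathcal{P}_{r_N}\) are countable, we fix versions and intersect countably many full-measure events. This makes the bound valid for all \(n\) and \(N\) at once, so the supremum is legitimate.
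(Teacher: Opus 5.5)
Your proposal is correct and follows the paper's proof: the same exact identity for the remainder, domination via \(|V_I(n)|\le Z^\beta(I)\) and conditional Jensen by the \(n\)-independent quantity \(T_N^\beta(b)+B_N^\beta(b)\), and then Proposition~\ref{prop:moving-cell-tails} with \(m_N=r_N\). Your extra remarks, on extending \((r_N)\) below \(N_0\) and on fixing versions over the countable index set, make explicit details that the paper leaves implicit.
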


\begin{proof}
The identity
\[
V_I(n)-a_I(n)-Y_I(n)
=
V_I(n)\mathbf{1}_{\{Z^\beta(I)>b/N\}}
-
\mathbb{E}\left[
V_I(n)\mathbf{1}_{\{Z^\beta(I)>b/N\}}
\middle|
\mathcal{F}_{t_N}
\right]
\]
is immediate from the definitions.
Since \( |V_I(n)|\leq Z^\beta(I), \) summing uniformly in \(n\) gives
\[
\sup_{n\in\mathbb{Z}}
\left|
\sum_{I\in\mathcal{P}_{r_N}}
\left(
V_I(n)-a_I(n)-Y_I(n)
\right)
\right|
\leq
T_N^\beta(b)+B_N^\beta(b),
\]
where \(T_N^\beta(b)\) and \(B_N^\beta(b)\) are the quantities in Proposition~\ref{prop:moving-cell-tails} with \(m_N=r_N\).
By the choice of \(N_0\), the sequence \((r_N)_{N\geq N_0}\) is nondecreasing and satisfies \(r_N/N\to1\).
Proposition~\ref{prop:moving-cell-tails} therefore yields \eqref{eq:uniform-capped-remainder-vanishing}.
\end{proof}

\begin{proposition}\label{prop:one-scale-annular-approximation}
For every fixed \(\beta>0\), almost surely,
\[
\max_{n\in\Lambda_N}
\left|
\widehat{Z^\beta}(n)
-
\widehat{Z_{t_N}^\beta}(n)
\right|
\longrightarrow0.
\]
\end{proposition}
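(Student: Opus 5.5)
The plan is to assemble the annular approximation from the exact decomposition into cells at level \(r_N\), and then combine Lemma~\ref{lem:centered-annular-concentration} with Lemma~\ref{lem:exact-capped-remainder}. We start from a pathwise identity. For every \(n\), since \(\mathcal{P}_{r_N}\) partitions \(\mathbb{T}\), we have
\[
\widehat{Z^\beta}(n)=\sum_{I\in\mathcal{P}_{r_N}}V_I(n).
\]
By \eqref{eq:cell-conditional-mean}, summing over cells gives
\[
\sum_{I}a_I(n)=\int_{\mathbb{T}}e^{in\theta}Z^\beta_{t_N}(\mathrm d\theta)=\widehat{Z^\beta_{t_N}}(n).
\]
The cells are finitely many, so there is no interchange issue. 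It follows that
\[
\widehat{Z^\beta}(n)-\widehat{Z^\beta_{t_N}}(n)=\sum_{I}Y_I(n)+\sum_I\bigl(V_I(n)-a_I(n)-Y_I(n)\bigr).
\]
Here \(Y_I(n)\) is built with a cap parameter \(b\) that we choose below.

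Next we fix \(\eta>0\) and \(K\geq1\), and take \(b=b(\eta,K)\) as in Lemma~\ref{lem:centered-annular-concentration}. With this \(b\), that lemma gives
\[
\limsup_N\max_{n\in\Lambda_N}\Bigl|\sum_I Y_I(n)\Bigr|\leq\eta
\]
almost surely on \(\Omega_{\beta,K}\). Lemma~\ref{lem:exact-capped-remainder} applies to the same \(b\), since it holds for every fixed \(b>0\). It shows that the remainder term tends to \(0\) uniformly in \(n\), almost surely. Combining the two, on \(\Omega_{\beta,K}\) minus a null set,
\[
\limsup_{N\to\infty}\max_{n\in\Lambda_N}\bigl|\widehat{Z^\beta}(n)-\widehat{Z^\beta_{t_N}}(n)\bigr|\leq\eta.
\]

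Finally, we take \(\eta=1/j\) for \(j\in\mathbb{N}\) and \(K\in\mathbb{N}\), which gives a countable family of null sets, and intersect over them. Using \eqref{eq:mass-localization-exhaustion}, almost every \(\omega\) lies in some \(\Omega_{\beta,K}\). For that \(K\) the limsup is at most \(1/j\) for every \(j\), hence it is zero. This is the claim.

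The only delicate point is bookkeeping. The cap \(b\) depends on \((\eta,K)\), so the \(Y_I\) differ between pairs \((\eta,K)\). However, the difference \(\widehat{Z^\beta}-\widehat{Z^\beta_{t_N}}\) does not depend on \(b\). Each pair therefore yields a valid bound on the same quantity, and the countable intersection resolves the dependence. I do not expect any step to be a genuine obstacle, because the substantive work is already contained in the two lemmas.
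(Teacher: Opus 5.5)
Your proposal is correct and matches the paper's proof: the same exact cell decomposition into the centered capped sum plus the capped remainder, the choice $b=b(\eta,K)$ from Lemma~\ref{lem:centered-annular-concentration} combined with Lemma~\ref{lem:exact-capped-remainder} for that same $b$, and a countable intersection over $(\eta,K)$ together with \eqref{eq:mass-localization-exhaustion}. The only difference is cosmetic: you take $\eta=1/j$, whereas the paper lets $\eta$ range over all positive rationals.
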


\begin{proof}
The half-open cells form an exact partition of \(\mathbb{T}\), so
\[
\sum_{I\in\mathcal{P}_{r_N}}V_I(n)
=
\widehat{Z^\beta}(n),
\]
and, by \eqref{eq:cell-conditional-mean},
\[
\sum_{I\in\mathcal{P}_{r_N}}a_I(n)
=
\widehat{Z_{t_N}^\beta}(n).
\]
Therefore,
\begin{equation}
\label{eq:annular-approximation-decomposition}
\widehat{Z^\beta}(n)
-
\widehat{Z_{t_N}^\beta}(n)
=
\sum_{I\in\mathcal{P}_{r_N}}Y_I(n)
+
\sum_{I\in\mathcal{P}_{r_N}}
\left(
V_I(n)-a_I(n)-Y_I(n)
\right).
\end{equation}

For every \( (\eta,K)\in\mathbb{Q}_{>0}\times\mathbb{N}, \) choose a deterministic \(b(\eta,K)>0\) for which Lemma~\ref{lem:centered-annular-concentration} holds.
Since \(\mathbb{Q}_{>0}\times\mathbb{N}\) is countable,
intersect the probability-one events supplied by Lemma~\ref{lem:centered-annular-concentration} and Lemma~\ref{lem:exact-capped-remainder} for all such pairs \((\eta,K)\),
together with the probability-one event in \eqref{eq:mass-localization-exhaustion}.

Fix an outcome in this common probability-one event.
Choose \(K\in\mathbb{N}\) such that the outcome belongs to \(\Omega_{\beta,K}\).
For every positive rational \(\eta\), use the corresponding \(b=b(\eta,K)\).  
Then, \eqref{eq:annular-approximation-decomposition}, \eqref{eq:centered-annular-limsup}, and \eqref{eq:uniform-capped-remainder-vanishing} give
\[
\limsup_{N\to\infty}
\max_{n\in\Lambda_N}
\left|
\widehat{Z^\beta}(n)
-
\widehat{Z_{t_N}^\beta}(n)
\right|
\leq
\eta.
\]
Letting \(\eta\downarrow0\) through the positive rationals proves the claim.
\end{proof}

The annuli \(\Lambda_N\) are pairwise disjoint and cover every nonzero integer of sufficiently large absolute value.  
Thus, a later estimate that is uniform on \(\Lambda_N\) will yield convergence along the full sequence \(|n|\to\infty\).

\section{Predictable Fourier decay and the periodized theorem}
\label{sec:periodized-rajchman}

We now complete the Fourier analysis of the periodized compact-range field.
Proposition~\ref{prop:one-scale-annular-approximation} reduces the terminal Fourier coefficient, uniformly on each dyadic annulus,
to that of the coarse measure \(Z_{t_N}^\beta\). 
On a global barrier event this coarse measure has a smooth density. 
Its spatial variation is controlled by an exponential tilt, and periodic integration by parts then gives the required annular decay.

The almost-sure Rajchman property is proved directly for the derivative limit \(M_*'\). 
The Seneta--Heyde theorem is used only afterward, to identify the positive critical measure produced by the periodized star-scale approximation.

\subsection{Predictable annular decay and the Rajchman property}
\label{subsec:predictable-annular-decay}

Recall the global barrier event
\[
E_\beta
=
\left\{
\sup_{t\geq0}
\sup_{\theta\in\mathbb{T}}
\left(
X_t(\theta)-\sqrt{2}\,t
\right)
<\beta
\right\}.
\]
Define, on the whole probability space,
\begin{equation}
\label{eq:predictable-density-definition}
f_t^\beta(\theta)
=
\left(
\beta+\sqrt{2}\,t-X_t(\theta)
\right)
e^{\sqrt{2}X_t(\theta)-t}.
\end{equation}
On \(E_\beta\), the barrier is never crossed, so
\begin{equation}
\label{eq:predictable-density-identification}
Z_t^\beta(\mathrm{d}\theta)
=
f_t^\beta(\theta)\,\mathrm{d}\theta.
\end{equation}
Moreover,
\begin{equation}
\label{eq:predictable-density-derivative}
\partial_\theta f_t^\beta(\theta)
=
\left[
\sqrt{2}
\left(
\beta+\sqrt{2}\,t-X_t(\theta)
\right)
-1
\right]
\partial_\theta X_t(\theta)
e^{\sqrt{2}X_t(\theta)-t}.
\end{equation}

\begin{lemma}
\label{lem:predictable-variation-bound}
For every \(\beta>0\) and \(t\geq0\),
\begin{equation}
\label{eq:predictable-variation-first-moment}
\mathbb{E}
\left\|
\partial_\theta f_t^\beta
\right\|_{L^1(\mathbb{T})}
\leq
C_k e^t
\left(
1+\beta+\sqrt{t}
\right).
\end{equation}
\end{lemma}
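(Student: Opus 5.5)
By Tonelli and rotation invariance of the law of the field (the covariance \(K_t\) depends only on \(\theta-\theta'\)), we have
\[
\mathbb{E}\|\partial_\theta f_t^\beta\|_{L^1(\mathbb{T})}=2\pi\,\mathbb{E}|\partial_\theta f_t^\beta(0)|,
\]
so it suffices to bound the one-point expectation. We use formula \eqref{eq:predictable-density-derivative} at \(\theta=0\) and write \(X=X_t(0)\) and \(D=\partial_\theta X_t(0)\).

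The key structural input is that \(X\) and \(D\) are independent. Both are centered and jointly Gaussian, and
\[
\mathbb{E}[XD]=\partial_{\theta'}K_t(\theta-\theta')\big|_{\theta=\theta'}=-K_t'(0)=0,
\]
because \(K_t\) is even and smooth, so \(K_t'(0)=0\). The variance of \(D\) is
\[
\operatorname{Var}(D)=-K_t''(0)=-\int_1^{e^t}q_u''(0)\,\frac{\mathrm du}{u}.
\]
By Lemma~\ref{lem:periodized-positive-definite-local-lifting}, near \(0\) we have \(q_u(h)=k(uh)\), hence \(q_u''(0)=u^2k''(0)\). This gives
\[
\operatorname{Var}(D)=|k''(0)|\,\frac{e^{2t}-1}{2}\le C_k e^{2t},
\]
and therefore \(\mathbb{E}|D|\le C_k e^t\).

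By independence,
\[
\mathbb{E}|\partial_\theta f_t^\beta(0)|
=\mathbb{E}|D|\cdot\mathbb{E}\Bigl[\bigl|\sqrt2(\beta+\sqrt2 t-X)-1\bigr|\,e^{\sqrt2X-t}\Bigr].
\]
Since \(X\sim N(0,t)\), the weight \(e^{\sqrt2X-t}\) has mean one. Under the tilted law, \(X\sim N(\sqrt2 t,t)\), so \(\beta+\sqrt2t-X\) becomes \(\beta-\sqrt t\,G\) with \(G\) standard normal. The second factor is thus
\[
\mathbb{E}\bigl|\sqrt2(\beta-\sqrt t\,G)-1\bigr|\le 1+\sqrt2\beta+\sqrt{2t}\,\mathbb{E}|G|\le C(1+\beta+\sqrt t).
\]
Combining the two factors yields \eqref{eq:predictable-variation-first-moment}.

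The main obstacle is justifying differentiation under the expectation, namely \(\operatorname{Var}(\partial_\theta X_t)=-K_t''(0)\) and \(\operatorname{Cov}(X_t,\partial_\theta X_t)=-K_t'(0)\). This follows from the smooth version of the field, since difference quotients are Gaussian and converge in \(L^2\) because \(K_t\in C^\infty\). The identity \(q_u''(0)=u^2k''(0)\) needs \(u\ge1\) and a neighborhood of \(0\) of size less than \(1/2\); this is exactly the local lifting of Lemma~\ref{lem:periodized-positive-definite-local-lifting}.
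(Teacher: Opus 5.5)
Your proposal is correct and follows essentially the same route as the paper. It reduces to a single point by stationarity, gets independence of $X_t(0)$ and $\partial_\theta X_t(0)$ from $K_t'(0)=0$ with $\operatorname{Var}(\partial_\theta X_t(0))=-\tfrac{k''(0)}{2}(e^{2t}-1)$ via local lifting, and bounds the remaining factor using the tilt $X_t(0)\sim N(\sqrt{2}\,t,t)$; your remark on justifying the derivative covariances through $L^2$ convergence of difference quotients only makes explicit a step the paper leaves implicit.
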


\begin{proof}
Recall that
\[
K_t(h)
=
\int_1^{e^t}
q_u(h)\,\frac{\mathrm{d}u}{u}.
\]
At \(h=0\), all nonzero terms in the periodization of \(q_u\) lie outside the support of \(k\). 
Hence, by evenness of \(k\),
\[
q_u'(0)=0,
\qquad
q_u''(0)=u^2k''(0),
\]
and therefore
\[
K_t'(0)=0,
\qquad
K_t''(0)
=
\frac{k''(0)}{2}
\left(
e^{2t}-1
\right).
\]
Stationarity implies
\[
\operatorname{Cov}\left(
X_t(\theta),
\partial_\theta X_t(\theta)
\right)
=
0
\]
and
\[
\operatorname{Var}\left(
\partial_\theta X_t(\theta)
\right)
=
-\frac{k''(0)}{2}
\left(
e^{2t}-1
\right).
\]
Thus, \(X_t(\theta)\) and \(\partial_\theta X_t(\theta)\) are independent Gaussian variables, and
\begin{equation}
\label{eq:spatial-derivative-first-moment}
\mathbb{E}
\left|
\partial_\theta X_t(\theta)
\right|
\leq
C_k e^t.
\end{equation}

By stationarity, independence, and \eqref{eq:predictable-density-derivative},
\begin{equation*}
\mathbb{E}
\left\|
\partial_\theta f_t^\beta
\right\|_{L^1(\mathbb{T})}
=
2\pi
\mathbb{E}
\left|
\partial_\theta X_t(0)
\right| 
\mathbb{E}\left[
\left|
\sqrt{2}
\left(
\beta+\sqrt{2}\,t-X_t(0)
\right)
-1
\right|
e^{\sqrt{2}X_t(0)-t}
\right].
\end{equation*}
Under the exponential tilt \(e^{\sqrt{2}X_t(0)-t}\), the variable \(X_t(0)\) has law \(N(\sqrt{2}\,t,t)\). 
Hence, 
\[
\mathbb{E}\left[
\left|
\sqrt{2}
\left(
\beta+\sqrt{2}\,t-X_t(0)
\right)
-1
\right|
e^{\sqrt{2}X_t(0)-t}
\right]
\le 
C(1+\beta+\sqrt{t}).
\] 
Combining with \eqref{eq:spatial-derivative-first-moment}, we prove \eqref{eq:predictable-variation-first-moment}.
\end{proof}

Recall
\[
\Lambda_N
=
\left\{
n\in\mathbb{Z}:
2^N\leq|n|<2^{N+1}
\right\},
\qquad
r_N
=
N-\left\lceil6\log_2N\right\rceil,
\qquad
t_N=r_N\log2.
\]

\begin{proposition}
\label{prop:predictable-annular-decay}
There is a probability-one event such that, for every integer \(j\geq1\), on \(E_j\),
\begin{equation}
\label{eq:predictable-annular-decay}
\max_{n\in\Lambda_N}
\left|
\widehat{Z_{t_N}^j}(n)
\right|
\leq
N^{-3}
\end{equation}
for all sufficiently large \(N\).
\end{proposition}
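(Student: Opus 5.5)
On \(E_j\) we have \(Z_t^j(\mathrm{d}\theta)=f_t^j(\theta)\,\mathrm{d}\theta\) by \eqref{eq:predictable-density-identification}, with \(f_t^j\) smooth and periodic. Periodic integration by parts therefore gives, for every \(n\neq0\),
\[
\bigl|\widehat{Z_t^j}(n)\bigr|
=\frac{1}{|n|}\left|\int_{\mathbb{T}}e^{in\theta}\,\partial_\theta f_t^j(\theta)\,\mathrm{d}\theta\right|
\leq\frac{\|\partial_\theta f_t^j\|_{L^1(\mathbb{T})}}{|n|}.
\]
For \(n\in\Lambda_N\) we have \(|n|\geq2^N\), so it suffices to show that
\[
\|\partial_\theta f_{t_N}^j\|_{L^1(\mathbb{T})}\leq N^{-3}2^N
\]
for all large \(N\). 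This bound uses only \(f_{t_N}^j\) and holds on the whole probability space; it is not restricted to \(E_j\).

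The key quantity is the logarithmic gap. Since \(e^{t_N}=2^{r_N}\leq2^N N^{-6}\), Lemma~\ref{lem:predictable-variation-bound} gives
\[
\mathbb{E}\|\partial_\theta f_{t_N}^j\|_{L^1}\leq C_k\,2^N N^{-6}\bigl(1+j+\sqrt{N}\bigr).
\]
Markov's inequality then yields
\[
\mathbb{P}\bigl(\|\partial_\theta f_{t_N}^j\|_{L^1}>N^{-3}2^N\bigr)\leq C_k N^{-3}\bigl(1+j+\sqrt N\bigr)\leq C_{k,j}N^{-5/2}.
\]
This is summable in \(N\). By Borel--Cantelli, for each fixed \(j\) there is a probability-one event \(\Omega_j\) on which the bound holds for all sufficiently large \(N\). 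The resulting threshold in \(N\) depends on \(j\) and the outcome, which the statement allows.

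Set \(\Omega^\ast=\bigcap_{j\geq1}\Omega_j\), a countable intersection, so it still has probability one. On \(\Omega^\ast\cap E_j\), the integration-by-parts bound applies to \(Z_{t_N}^j\), which gives \eqref{eq:predictable-annular-decay} for all large \(N\).

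I expect no serious obstacle here, because the variation estimate is already in hand. The points to check are the following.
\begin{enumerate}
\item The exponent must work out: \(N^{-6}\) from \(r_N\) against \(N^{-3}\) targeted, with \(\sqrt N\) lost, still leaves \(N^{-5/2}\), which is summable.
\item The probability-one event must be chosen uniformly over \(j\), which is why the Borel--Cantelli argument is applied to \(f_t^j\) on the full space rather than conditioned on \(E_j\).
\item Integration by parts needs \(f_{t_N}^j\) to be \(C^1\) and periodic. This follows from the smooth version of \(X_t\) fixed in Section~\ref{subsec:kernel-scale-field}.
\end{enumerate}
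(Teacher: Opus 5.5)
Your proof is correct and follows the paper's argument: integration by parts on \(E_j\), then Lemma~\ref{lem:predictable-variation-bound} with Markov's inequality, Borel--Cantelli for each fixed \(j\), and a countable intersection over \(j\). The only cosmetic difference is the indexing: the paper runs Borel--Cantelli over integer scales \(m\) with threshold \(2^m m^3\) and then substitutes \(m=r_N\), whereas you index directly by \(N\) with threshold \(N^{-3}2^N\). Both series are summable and give the same conclusion.
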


\begin{proof}
For each \(j\geq1\),  by Lemma~\ref{lem:predictable-variation-bound} and Markov's inequality,
\[
\mathbb{P}\left(
\left\|
\partial_\theta f_{m\log2}^{\,j}
\right\|_{L^1(\mathbb{T})}
>
2^m m^3
\right)
\leq
C_{j,k}
\frac{1+\sqrt{m}}{m^3}.
\]
The right-hand side is summable in \(m\). 
Hence, by the Borel--Cantelli lemma and a countable intersection over \(j\in\mathbb{N}\), almost surely, for every \(j\geq1\),
\begin{equation}
\label{eq:integer-time-variation-bound}
\left\|
\partial_\theta f_{m\log2}^{\,j}
\right\|_{L^1(\mathbb{T})}
\leq
2^m m^3
\end{equation}
for all sufficiently large \(m\).

Let \(\Omega_0\) denote the probability-one event constructed above.
Fix \(j\geq1\). 
On \(\Omega_0\cap E_j\), \eqref{eq:predictable-density-identification} and periodic integration by parts imply, for \(n\neq0\),
\begin{equation}
\label{eq:predictable-fourier-variation-bound}
\left|
\widehat{Z_t^j}(n)
\right|
\leq
\frac{1}{|n|}
\left\|
\partial_\theta f_t^j
\right\|_{L^1(\mathbb{T})}.
\end{equation}
Taking \(t=t_N=r_N\log2\), estimate \eqref{eq:integer-time-variation-bound} and \eqref{eq:predictable-fourier-variation-bound} yield,
for all sufficiently large \(N\) and every \(n\in\Lambda_N\),
\[
\left|
\widehat{Z_{t_N}^j}(n)
\right|
\leq
2^{r_N-N}r_N^3,
\]
which proves \eqref{eq:predictable-annular-decay} by the definition of \(r_N\).
\end{proof}

We now combine the predictable estimate with the one-scale annular approximation.

\begin{theorem}[The periodized derivative measure is Rajchman]
\label{thm:periodized-derivative-rajchman}
Almost surely,
\begin{equation}
\label{eq:periodized-derivative-Rajchman}
\lim_{|n|\to\infty}
\widehat{M_*'}(n)
=
0.
\end{equation}
\end{theorem}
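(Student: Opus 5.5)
The plan is to combine Proposition~\ref{prop:one-scale-annular-approximation} with Proposition~\ref{prop:predictable-annular-decay} on each barrier event \(E_j\), then patch over integer \(j\) using Proposition~\ref{prop:global-barrier-derivative-limit}.

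First, intersect countably many probability-one events. For each integer \(j\geq1\), take the event of Proposition~\ref{prop:one-scale-annular-approximation} with the deterministic parameter \(\beta=j\). Add the single event of Proposition~\ref{prop:predictable-annular-decay}, the event on which \eqref{eq:global-derivative-weak-limit} holds and \(M_*'=Z^j\) on \(E_j\) for all \(j\) (from \eqref{eq:barrier-identification}), and the event \(\{S<\infty\}\) from \eqref{eq:ordinary-critical-total-mass-vanishing}. Call the intersection \(\Omega_1\). Since \(S<\infty\), we have \(\bigcup_j E_j\supset\{S<\infty\}\), so every outcome in \(\Omega_1\) lies in some \(E_j\).

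Now fix \(\omega\in\Omega_1\) and choose \(j\) with \(\omega\in E_j\). For \(n\in\Lambda_N\) we split
\[
\bigl|\widehat{M_*'}(n)\bigr|
=
\bigl|\widehat{Z^j}(n)\bigr|
\leq
\bigl|\widehat{Z^j}(n)-\widehat{Z^j_{t_N}}(n)\bigr|
+
\bigl|\widehat{Z^j_{t_N}}(n)\bigr|.
\]
Taking the maximum over \(\Lambda_N\), the first term tends to zero by Proposition~\ref{prop:one-scale-annular-approximation} with \(\beta=j\). The second is at most \(N^{-3}\) for large \(N\) by \eqref{eq:predictable-annular-decay}. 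Hence
\[
\max_{n\in\Lambda_N}\bigl|\widehat{M_*'}(n)\bigr|\to0 \quad\text{as } N\to\infty.
\]
The annuli \(\Lambda_N\), \(N\geq N_0\), are disjoint and exhaust all \(|n|\geq2^{N_0}\). Therefore \(\widehat{M_*'}(n)\to0\) as \(|n|\to\infty\), which is \eqref{eq:periodized-derivative-Rajchman}.

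The main point requiring care is the order of quantifiers. Proposition~\ref{prop:one-scale-annular-approximation} is stated for a fixed deterministic \(\beta\), whereas the relevant barrier level \(j\) depends on \(\omega\). This is exactly why I restrict to integer \(\beta=j\) and take a countable intersection before fixing \(\omega\). Beyond that, the identification \(M_*'=Z^j\) on \(E_j\) holds as measures, so the Fourier coefficients agree, and no further obstacle is expected.
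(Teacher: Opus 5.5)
Your proposal is correct and follows essentially the same route as the paper. The paper also intersects, countably over integer barriers, the events of Propositions~\ref{prop:one-scale-annular-approximation} and~\ref{prop:predictable-annular-decay} with \(\{S<\infty\}\), fixes an outcome, and chooses an integer \(j>S\) so that \(M_*'=Z^j\) on \(E_j\); it then splits \(\widehat{Z^j}(n)\) into the annular approximation error and the predictable coefficient exactly as you do, with the same attention to fixing deterministic integer barriers before selecting \(j\).
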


\begin{proof}
By a countable intersection, we may work on a probability-one event on which \(S<\infty\)
and the conclusions of Propositions~\ref{prop:one-scale-annular-approximation}
and \ref{prop:predictable-annular-decay} hold simultaneously for every \(j\in\mathbb{N}\).  
Fix an outcome in this event.
Choose an integer \(j>S\).  
Then, \(E_j\) occurs and \(M_*'=Z^j\) by \eqref{eq:barrier-identification}. 
Hence,
\[
\max_{n\in\Lambda_N}
\left|
\widehat{M_*'}(n)
\right|
\leq
\max_{n\in\Lambda_N}
\left|
\widehat{Z^j}(n)-\widehat{Z_{t_N}^j}(n)
\right|
+
\max_{n\in\Lambda_N}
\left|
\widehat{Z_{t_N}^j}(n)
\right|
\longrightarrow0.
\]
Since the annuli \(\Lambda_N\) cover all integers of sufficiently large absolute value, \eqref{eq:periodized-derivative-Rajchman} follows.
\end{proof}

The integer \(j\) is selected only after all deterministic integer-barrier assertions have been made simultaneous. 
In particular, no theorem proved for fixed deterministic \(\beta\) is applied with a random barrier parameter.

\subsection{Periodized Seneta--Heyde convergence}
\label{subsec:periodized-seneta-heyde}

We next identify the positive critical measure associated with the periodized scale approximation. 
The local Seneta--Heyde theorem is stated for the real-line compact-range field. 
Exact local lifting transfers it to short circle arcs, and a disjoint partition then gives a global measure-level statement.

\begin{proposition}\label{prop:periodized-seneta-heyde}
As \(t\to\infty\),
\begin{equation}
\label{eq:periodized-seneta-heyde}
\sqrt{t}\,
M_t^{\sqrt{2}}
\longrightarrow
\sqrt{\frac{2}{\pi}}\,
M_*'
\end{equation}
in probability in the weak topology on finite positive measures on \(\mathbb{T}\).
\end{proposition}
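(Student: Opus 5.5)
The plan is to reduce the statement to the real-line Seneta--Heyde theorem for compact-range star-scale fields (Duplantier--Rhodes--Sheffield--Vargas, \cite{DRSVRenormalization}) through exact local lifting, and then assemble the local statements with a finite partition. Fix a finite partition of \(\mathbb{T}\) into deterministic half-open arcs \(J_1,\ldots,J_L\) of diameter less than \(1/2\). By \eqref{eq:exact-local-lifting-covariance}, the complete space--time field on each lifted \(\widetilde J_i\) has exactly the compact-range law. The cited theorem then gives, for every \(\varphi\in C_c(\widetilde J_i^{\circ})\) (or continuous on a slightly larger lifted arc),
\[
\sqrt{t}\,M_t^{\sqrt2}(\varphi)\longrightarrow\sqrt{2/\pi}\,M'_\infty(\varphi)
\]
in probability, where \(M'_\infty\) is the real-line derivative limit. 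The derivative approximations \(M_t'\) on the lifted arc coincide pathwise with the periodized \(M_t'\) restricted to \(J_i\). By Proposition~\ref{prop:global-barrier-derivative-limit}, the local limit therefore equals \(M_*'(\varphi)\) almost surely. If the cited statement uses a different normalization of the derivative martingale, such as a factor-of-\(2\) or sign convention, I would check the constant against \eqref{eq:derivative-approximation}.

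Next I would globalize. Take a finite smooth partition of unity \(\{\chi_i\}\) subordinate to a cover of \(\mathbb{T}\) by open arcs of diameter less than \(1/2\). For each \(f\in C(\mathbb{T})\), write \(f=\sum_i f\chi_i\) and apply the local statement to each \(f\chi_i\). This gives
\[
\sqrt t\,M_t^{\sqrt2}(f)\to\sqrt{2/\pi}\,M_*'(f)
\]
in probability for every continuous \(f\). Using a partition of unity avoids boundary issues at cell endpoints, although atomlessness (Corollary~\ref{cor:atomlessness}) would also handle them.

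Finally, I would upgrade from convergence for each fixed \(f\) to convergence in probability in the weak topology. Fix a countable uniformly dense family \(\mathcal D\subset C(\mathbb{T})\) containing \(1\). Given any sequence \(t_n\to\infty\), a diagonal extraction yields a subsequence along which \(\sqrt{t_n}M_{t_n}^{\sqrt2}(f)\to\sqrt{2/\pi}M_*'(f)\) almost surely for all \(f\in\mathcal D\) simultaneously. Because \(1\in\mathcal D\), the total masses are eventually bounded along this subsequence. An \(\varepsilon/3\) argument then gives almost sure weak convergence along it. Since every sequence has such a subsequence, and the weak topology on finite measures over compact \(\mathbb{T}\) is metrizable, convergence in probability follows.

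I expect the main obstacle to be the first step: matching hypotheses and normalizations with the cited real-line theorem. In particular, I need to confirm that it covers test functions and cone kernels of the form of Definition~\ref{def:admissible-compact-range-kernel}, and that it uses the same \(M_t'\), with no extra factor in \(\sqrt{t}\) versus \(\sqrt{\log(1/\varepsilon)}\) conventions. Here \(t\) is exactly the log-scale, so the \(\sqrt t\) normalization should be the one in the source. If the source only states convergence of total mass on intervals, I would replace the partition of unity by dyadic subarcs and use atomlessness of \(M_*'\) to approximate continuous functions.
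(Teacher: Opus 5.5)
Your proof is correct. It shares the paper's architecture: exact local lifting, the real-line Seneta--Heyde theorem of \cite{DRSVRenormalization}, identification of the local derivative limit with \(M_*'\) through the pathwise coincidence of the approximations \(M_t'\), and a countable determining family to reach weak convergence in probability. The difference lies in how the local statements are assembled.

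The paper works with the disjoint dyadic cells \(I\in\mathcal{P}_m\) and indicators. It applies the cited theorem to \(I^\circ\) and identifies \(M'_{\mathrm{loc}}(I^\circ)=M_*'(I)\) on \(E_j\) with \(j>S\), which uses that \(I\) is a \(Z^j\)-continuity set (atomlessness, Corollary~\ref{cor:atomlessness}). It then sums over cells to obtain total-mass convergence, and approximates \(f\) by cellwise constant functions \(f_m\) using tightness of \(\sqrt{t}\,M_t^{\sqrt{2}}(\mathbb{T})\).

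Your smooth partition of unity replaces all of this. Since \(\sum_i\chi_i=1\), total mass and every \(f\in C(\mathbb{T})\) are handled at once. The identification \(M'_\infty(f\chi_i)=M_*'(f\chi_i)\) follows directly from the weak convergence in Proposition~\ref{prop:global-barrier-derivative-limit}, with no continuity-set argument. This also shows that the paper's closing remark about overlapping covers applies to covers by indicators, not to weighted partitions of unity.

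The cost is that your main route needs the cited theorem in test-function form. The paper's set-based route still works if that theorem is stated only for intervals, and this is exactly the fallback you describe.
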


\begin{proof}
Recall the dyadic partition \(\mathcal{P}_m\) from Section~\ref{sec:rooted-small-cells}, whose cells have length \( \ell_m=2\pi2^{-m}. \)
Choose an integer \(m_0\) such that
\[
\ell_{m_0}<\frac{1}{2}.
\]
Fix \(m\geq m_0\) and a cell \(I\in\mathcal{P}_m\).  
Its interior admits a lift to a real interval of diameter less than \(1/2\).
By Lemma~\ref{lem:periodized-positive-definite-local-lifting},
the complete space-time field on this lift has exactly the compact-range law used in \cite{DRSVRenormalization}. 
The local Seneta--Heyde theorem \cite[Theorem~5]{DRSVRenormalization} therefore gives
\begin{equation}
\label{eq:local-cell-seneta-heyde}
\sqrt{t}\,
M_t^{\sqrt{2}}(I^\circ)
\longrightarrow
\sqrt{\frac{2}{\pi}}\,
M_{\mathrm{loc}}'(I^\circ)
\end{equation}
in probability, where \(M_{\mathrm{loc}}'\) denotes the local derivative limit.

We next identify \(M_{\mathrm{loc}}'(I^\circ)\) with \(M_*'(I)\).
By a countable intersection, we may work on a probability-one event on which the relevant conclusions of \cite[Theorem~17]{DRSVDerivative} hold,
\(S<\infty\), \(M_t^{\sqrt{2}}(\mathbb{T})\to0\), and, for every \(j\in\mathbb{N}\),
\[
Z_t^j\xrightarrow{\mathrm{w}}Z^j,
\qquad
Z^j\ \text{is atomless}.
\]
Fix an outcome in this event and choose an integer \(j>S\). 
Since \(I\) is then a \(Z^j\)-continuity set,   \eqref{eq:barrier-identification} and \eqref{eq:derivative-truncation-identity} imply, on \(E_j\),
\[
M_t'(I)
=
Z_t^j(I)-jM_t^{\sqrt{2}}(I)
\longrightarrow
Z^j(I)
=
M_*'(I)
\qquad
\text{almost surely}.
\]
On the other hand, exact local lifting and \cite[Theorem~17]{DRSVDerivative} imply
\[
M_t'(I^\circ)
\longrightarrow
M_{\mathrm{loc}}'(I^\circ)
\qquad
\text{almost surely}.
\]
Since \(M_t'\) is absolutely continuous at every finite time, \(M_t'(I^\circ)=M_t'(I)\), and hence
\[
M_{\mathrm{loc}}'(I^\circ)=M_*'(I)
\qquad
\text{almost surely}.
\]
Together with the absolute continuity of \(M_t^{\sqrt{2}}\), \eqref{eq:local-cell-seneta-heyde} therefore becomes
\begin{equation}
\label{eq:half-open-cell-seneta-heyde}
\sqrt{t}\,
M_t^{\sqrt{2}}(I)
\longrightarrow
\sqrt{\frac{2}{\pi}}\,
M_*'(I)
\end{equation}
in probability. 
Summing over the finitely many cells of \(\mathcal{P}_m\) gives
\begin{equation}
\label{eq:total-mass-seneta-heyde}
\sqrt{t}\,
M_t^{\sqrt{2}}(\mathbb{T})
\longrightarrow
\sqrt{\frac{2}{\pi}}\,
M_*'(\mathbb{T})
\end{equation}
in probability.

Let \(f\in C(\mathbb{T})\). 
For each \(I\in\mathcal{P}_m\), choose \(\theta_I\in I\), and define the cellwise constant function
\[
f_m(\theta)
=
\sum_{I\in\mathcal{P}_m}
f(\theta_I)\mathbf{1}_I(\theta).
\]
If
\[
\omega_f(\delta)
:=
\sup\left\{
|f(\theta)-f(\theta')|:
d_{\mathbb{T}}(\theta,\theta')\leq\delta
\right\},
\]
then
\begin{equation}
\label{eq:cellwise-constant-approximation}
\|f-f_m\|_{L^\infty(\mathbb{T})}
\leq
\omega_f(\ell_m)
\longrightarrow0.
\end{equation}
For fixed \(m\), \eqref{eq:half-open-cell-seneta-heyde} yields
\begin{equation}
\label{eq:cellwise-test-function-seneta-heyde}
\int_{\mathbb{T}}
f_m(\theta)\sqrt{t}\,
M_t^{\sqrt{2}}(\mathrm{d}\theta)
\longrightarrow
\sqrt{\frac{2}{\pi}}
\int_{\mathbb{T}}
f_m(\theta)\,M_*'(\mathrm{d}\theta)
\end{equation}
in probability.  
By \eqref{eq:total-mass-seneta-heyde}, the family \( \sqrt{t}\,M_t^{\sqrt{2}}(\mathbb{T}) \) is tight as \(t\to\infty\).
Together with \eqref{eq:cellwise-constant-approximation}, \eqref{eq:cellwise-test-function-seneta-heyde}, the triangle inequality,
and choosing \(m\) large before letting \(t\to\infty\), we obtain
\[
\int_{\mathbb{T}}
f(\theta)\sqrt{t}\,
M_t^{\sqrt{2}}(\mathrm{d}\theta)
\longrightarrow
\sqrt{\frac{2}{\pi}}
\int_{\mathbb{T}}
f(\theta)\,M_*'(\mathrm{d}\theta)
\]
in probability.
Applying this conclusion to a countable uniformly dense determining family in \(C(\mathbb{T})\) proves \eqref{eq:periodized-seneta-heyde}.
\end{proof}

The disjoint partition is used to assemble the local statements into a global measure identity.  
An overlapping finite cover alone would not give the total-mass convergence required in the final approximation step.

Combining Proposition~\ref{prop:periodized-seneta-heyde} with Theorem~\ref{thm:periodized-derivative-rajchman}, we obtain the following corollary.

\begin{corollary}
\label{cor:periodized-scale-chaos-rajchman}
Denote by \(M_{\mathrm{star}}^{\mathrm{crit}}\) the limit of the normalized critical measures \( \sqrt{t}\,M_t^{\sqrt{2}} \) in the periodized star-scale approximation.
Then, almost surely,
\begin{equation}
\label{eq:periodized-scale-critical-measure}
M_{\mathrm{star}}^{\mathrm{crit}}
=
\sqrt{\frac{2}{\pi}}\,
M_*'.
\end{equation}
In particular, \(M_{\mathrm{star}}^{\mathrm{crit}}\) is almost surely a Rajchman measure.
\end{corollary}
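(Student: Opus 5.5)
The corollary follows from two earlier results: the convergence in probability of Proposition~\ref{prop:periodized-seneta-heyde} and the almost-sure Rajchman property of Theorem~\ref{thm:periodized-derivative-rajchman}. The only issue is making the phrase ``the limit'' precise, since convergence in probability identifies a limit only up to a null set.

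First I fix \(M_{\mathrm{star}}^{\mathrm{crit}}\) as a random finite positive measure. The weak topology on finite positive measures on \(\mathbb{T}\) is metrizable, for instance by a weighted sum of \(|\mu(f_i)-\nu(f_i)|\wedge1\) over a countable uniformly dense family \(\{f_i\}\subset C(\mathbb{T})\). With respect to such a metric, convergence in probability has an almost surely unique limit. So whatever \(M_{\mathrm{star}}^{\mathrm{crit}}\) denotes, either by definition or as any version of the in-probability limit of \(\sqrt{t}\,M_t^{\sqrt{2}}\), it must agree almost surely with \(\sqrt{2/\pi}\,M_*'\). This gives \eqref{eq:periodized-scale-critical-measure}.

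Concretely, I pass to a deterministic sequence \(t_k\to\infty\) along which \(\sqrt{t_k}\,M_{t_k}^{\sqrt{2}}\to\sqrt{2/\pi}\,M_*'\) almost surely in this metric. Any other candidate limit is also an almost-sure limit along a further subsequence. Because weak limits of a fixed sequence are unique, the two candidates coincide outside a null set. The same argument, applied to the countable family of test functions, also shows that any two in-probability limits agree on \(C(\mathbb{T})\) and hence as measures.

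For the Rajchman property, I restrict to the probability-one event where \eqref{eq:periodized-scale-critical-measure} holds and where Theorem~\ref{thm:periodized-derivative-rajchman} holds. On this event,
\[
\widehat{M_{\mathrm{star}}^{\mathrm{crit}}}(n)=\sqrt{\frac{2}{\pi}}\,\widehat{M_*'}(n)\longrightarrow0
\qquad\text{as }|n|\to\infty,
\]
since multiplying a measure by a deterministic constant just multiplies every Fourier coefficient by that constant.

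There is no real obstacle here. The only care needed is in the null-set bookkeeping: the identification comes from convergence in probability and the Rajchman statement is almost sure, so I intersect their two probability-one events and never apply either one to a random parameter.
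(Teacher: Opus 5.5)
Your proposal is correct and follows the paper's route: the paper obtains the corollary simply by combining the in-probability Seneta--Heyde identification of Proposition~\ref{prop:periodized-seneta-heyde} with the almost-sure Rajchman property of Theorem~\ref{thm:periodized-derivative-rajchman}. Your remarks on the almost-sure uniqueness of in-probability limits under a metric for the weak topology, and on intersecting the two probability-one events, spell out bookkeeping that the paper leaves implicit.
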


The identification of \(M_{\mathrm{star}}^{\mathrm{crit}}\) with the convolution-mollifier critical chaos associated with the limiting generalized field will be established in Section~\ref{sec:exact-circle-transfer}.

\section{Transfer to the exact circle field}
\label{sec:exact-circle-transfer}

We now transfer the periodized theorem to the exact circle field. 
The first step is an explicit Fourier comparison of the two limiting covariance kernels. 
The missing covariance is positive definite and smooth, and can therefore be supplied by an independent smooth Gaussian field.  
Equality in law of the resulting generalized fields is not, however, sufficient to identify their critical chaos measures.  
We accordingly compare the star-scale and convolution approximations themselves and apply the two critical uniqueness theorems of Junnila
and Saksman \cite{JunnilaSaksman2017} in their distinct roles:
Theorem~1.1 gives convergence in distribution, while Theorem~4.4 upgrades this to convergence in probability on the original coupling.
A final constant-mode identity then transfers the Rajchman property to the canonical critical chaos of the exact circle field.

\subsection{Fourier covariance correction}
\label{subsec:fourier-covariance-correction}

We use the Fourier conventions
\[
\widehat{k}(\xi)
=
\int_{\mathbb{R}}
k(x)e^{-i\xi x}\,\mathrm{d}x
\qquad
\text{and}
\qquad
[F]_n
:=
\frac{1}{2\pi}
\int_0^{2\pi}
F(\theta)e^{-in\theta}\,\mathrm{d}\theta.
\]
Recall
\[
q_u(\theta)
=
\sum_{\ell\in\mathbb{Z}}
k\bigl(u(\theta+2\pi\ell)\bigr),
\qquad
K_t(\theta)
=
\int_1^{e^t}
q_u(\theta)\,\frac{\mathrm{d}u}{u}.
\]
By \eqref{eq:periodized-kernel-fourier-coefficient} and the evenness of \(\widehat{k}\), for \(n\neq0\),
\[
[K_t]_n
=
\frac{1}{2\pi}
\int_1^{e^t}
\widehat{k}\left(\frac{n}{u}\right)
\frac{\mathrm{d}u}{u^2}=
\frac{1}{2\pi|n|}
\int_{|n|e^{-t}}^{|n|}
\widehat{k}(v)\,\mathrm{d}v.
\]
For \(n=0\),
\[
[K_t]_0
=
a_0(1-e^{-t}),
\qquad
a_0
:=
\frac{\widehat{k}(0)}{2\pi}.
\]
Thus, as \(t\to\infty\), the Fourier coefficients of \(K_t\) converge to those of the distribution \(K_*\) characterized by
\[
[K_*]_0=a_0,
\qquad
[K_*]_n
=
a_n
:=
\frac{1}{2\pi|n|}
\int_0^{|n|}
\widehat{k}(v)\,\mathrm{d}v,
\qquad
n\neq0.
\]
For \(h\neq0\), the compact support of \(k\) gives \(q_u(h)=0\) whenever \(u>d_{\mathbb{T}}(h,0)^{-1}\).
Hence, off the diagonal, the distribution \(K_*\) is represented by
\[
K_*(h)
=
\int_1^\infty q_u(h)\,\frac{\mathrm{d}u}{u},
\qquad
h\neq0.
\]

As a distribution on \(\mathbb{T}\), the exact circle covariance is
\begin{equation}
\label{eq:exact-circle-covariance-Fourier-series}
K_{\mathrm{cir}}(\theta)
:=
\log\frac{1}{|e^{i\theta}-1|}
=
\sum_{n\neq0}
\frac{e^{in\theta}}{2|n|}.
\end{equation}
Fourier inversion, evenness, and \(k(0)=1\) imply
\begin{equation}
\label{eq:hat-k-half-line-integral}
\int_0^\infty
\widehat{k}(v)\,\mathrm{d}v
=
\pi.
\end{equation}
For \(n\neq0\), set
\[
b_n
:=
\frac{1}{2|n|}-a_n
=
\frac{1}{2\pi|n|}
\int_{|n|}^{\infty}
\widehat{k}(v)\,\mathrm{d}v.
\]

\begin{lemma}
\label{lem:smooth-covariance-correction}
The coefficients \(b_n\) are nonnegative and satisfy
\[
|n|^A b_n\longrightarrow0
\qquad
\text{as }|n|\to\infty
\]
for every \(A>0\). 
Consequently,
\begin{equation}
\label{eq:smooth-correction-covariance}
B_{\mathrm{sm}}(\theta)
:=
\sum_{n\neq0}
b_ne^{in\theta}
\end{equation}
defines a real-valued even \(C^\infty\) positive-definite function on \(\mathbb{T}\), and
\begin{equation}
\label{eq:covariance-correction-identity}
B_{\mathrm{sm}}
=
K_{\mathrm{cir}}+a_0-K_*
\end{equation}
as distributions.
\end{lemma}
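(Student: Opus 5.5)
The plan is to read everything off the explicit formula
\[
b_n=\frac{1}{2\pi|n|}\int_{|n|}^\infty\widehat{k}(v)\,\mathrm{d}v .
\]

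\emph{Nonnegativity.} This is immediate from \(\widehat{k}\geq0\) in Definition~\ref{def:admissible-compact-range-kernel}. The identity \(b_n=\frac{1}{2|n|}-a_n\) itself comes from \eqref{eq:hat-k-half-line-integral}. To justify \eqref{eq:hat-k-half-line-integral}, note that \(k\in C_c^\infty\), so \(\widehat{k}\) is a Schwartz function. Fourier inversion gives \(\int_{\mathbb{R}}\widehat{k}=2\pi k(0)=2\pi\). Since \(k\) is even and real, \(\widehat{k}\) is even, and therefore \(\int_0^\infty\widehat{k}=\pi\).

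\emph{Rapid decay.} Because \(\widehat{k}\) is Schwartz, for every \(A\) we have \(0\leq\widehat{k}(v)\leq C_A(1+v)^{-A-2}\). Integrating over \([|n|,\infty)\) gives
\[
b_n\leq C_A'|n|^{-A-2},
\]
so \(|n|^Ab_n\to0\).

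\emph{Regularity and positive definiteness of \(B_{\mathrm{sm}}\).} The series \(\sum_{n\neq0}|n|^jb_n\) converges for every \(j\geq0\). Hence the series \eqref{eq:smooth-correction-covariance} and all of its term-by-term derivatives converge uniformly, and \(B_{\mathrm{sm}}\in C^\infty(\mathbb{T})\). Since \(b_n=b_{-n}\) is real, \(B_{\mathrm{sm}}(\theta)=\sum_{n\ge1}2b_n\cos(n\theta)\), which is real-valued and even. Its Fourier coefficients are nonnegative (with \([B_{\mathrm{sm}}]_0=0\)), so the Bochner--Herglotz theorem on \(\mathbb{T}\), used as in Lemma~\ref{lem:periodized-positive-definite-local-lifting}, shows that \(B_{\mathrm{sm}}\) is positive definite.

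\emph{The distributional identity \eqref{eq:covariance-correction-identity}.} We compare Fourier coefficients, since a distribution on \(\mathbb{T}\) is determined by them.
\begin{itemize}
\item For \(n\neq0\): \([K_{\mathrm{cir}}]_n=\frac{1}{2|n|}\) by \eqref{eq:exact-circle-covariance-Fourier-series}, the constant \(a_0\) contributes nothing, and \([K_*]_n=a_n\). So the right side has coefficient \(\frac{1}{2|n|}-a_n=b_n\).
\item For \(n=0\): the right side has coefficient \(0+a_0-a_0=0=[B_{\mathrm{sm}}]_0\).
\end{itemize}

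The only point needing care is that \(K_*\) is a well-defined distribution whose coefficients are the stated ones. The \(a_n\) are bounded by \(a_0/|n|\cdot\pi/\widehat{k}(0)\)-type constants, hence of polynomial growth, so \(K_*\) is a tempered distribution on \(\mathbb{T}\). \(K_*\) was defined in the text precisely as the distribution with these coefficients, so no further obstacle arises. The one substantive input is that \(\widehat{k}\) is Schwartz, which gives the rapid decay.
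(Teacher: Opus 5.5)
Your proof is correct and follows the paper's argument essentially step for step. You get nonnegativity from \(\widehat{k}\geq0\), rapid decay of \(b_n\) because \(\widehat{k}\) is Schwartz, and smoothness, reality, evenness and positive definiteness of \(B_{\mathrm{sm}}\) from that decay, the symmetry \(b_{-n}=b_n\), and the nonnegative coefficients; the identity then follows by matching Fourier coefficients (zero mode \(a_0-a_0=0\), nonzero modes \(\frac{1}{2|n|}-a_n=b_n\)). The only cosmetic point is that your roundabout bound on \(a_n\) is simply \(0\leq a_n\leq\frac{1}{2|n|}\), which follows directly from \(\widehat{k}\geq0\) and \(\int_0^\infty\widehat{k}(v)\,\mathrm{d}v=\pi\).
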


\begin{proof}
Since \(\widehat{k}\geq0\), one has \(b_n\geq0\). 
Moreover, \(k\in C_c^\infty(\mathbb{R})\), so \(\widehat{k}\) is Schwartz.  
Thus, for every \(M\geq1\),
\[
0\leq b_n
\leq
C_M|n|^{-M-1}.
\]
The rapid decay and the symmetry \(b_{-n}=b_n\in\mathbb{R}\) imply that \(B_{\mathrm{sm}}\) is real-valued, even, and \(C^\infty\). 
Since all of its Fourier coefficients are nonnegative, it is positive definite.
Finally, the zero Fourier coefficient of \(K_{\mathrm{cir}}+a_0-K_*\) is zero, while for \(n\neq0\),
\[
[K_{\mathrm{cir}}+a_0-K_*]_n
=
\frac{1}{2|n|}-a_n
=
b_n.
\]
This proves \eqref{eq:covariance-correction-identity}.
\end{proof}

We next construct the limiting star field.

\begin{lemma}
\label{lem:scale-field-negative-Sobolev-limit}
For every \(s>0\), the process \((X_t)_{t\geq0}\) is an \(L^2\)-bounded \(H^{-s}(\mathbb{T})\)-valued martingale. 
There exists a centered generalized Gaussian field \(X_*\) such that
\begin{equation}
\label{eq:scale-field-negative-Sobolev-limit}
X_t
\longrightarrow
X_*
\qquad
\text{almost surely and in }L^2
\text{ in }H^{-s}(\mathbb{T}).
\end{equation}
The covariance of \(X_*\) is \(K_*\).
\end{lemma}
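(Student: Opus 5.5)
We work coefficientwise in Fourier space. Write \(\widehat{X_t}(n)=[X_t]_n\). The \(H^{-s}(\mathbb{T})\) norm is
\[
\|g\|_{H^{-s}}^2=\sum_{n}(1+n^2)^{-s}\,\bigl|[g]_n\bigr|^2 .
\]
Since \(X_t\) is almost surely smooth at every finite time, \(X_t\in H^{-s}\).

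\textbf{Martingale property and \(L^2\) bound.} For \(t\ge s'\), the increment \(X_t-X_{s'}\) is the future increment field. By Lemma~\ref{lem:future-finite-range-independence} it is centered and independent of \(\mathcal{F}_{s'}^\circ\). Hence \(\mathbb{E}[X_t\mid\mathcal{F}_{s'}]=X_{s'}\) as an \(H^{-s}\)-valued (Bochner) conditional expectation. Pairing with each \(e^{-in\theta}\) reduces this to the scalar statement for \([X_t]_n\).

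By stationarity, \(\mathbb{E}\bigl|[X_t]_n\bigr|^2=\frac{1}{2\pi}[K_t]_n\). Indeed, \(\mathbb{E}[X_t(\theta)X_t(\theta')]=K_t(\theta-\theta')\), so a double integral gives \((2\pi)^{-2}\cdot 2\pi\,[K_t]_n\). From the computed formula and \(\widehat{k}\ge0\),
\[
0\le[K_t]_n\le a_n\le \frac{\pi}{2\pi|n|}=\frac{1}{2|n|}\quad(n\ne0),
\]
using \eqref{eq:hat-k-half-line-integral}, and \([K_t]_0\le a_0\). Therefore
\[
\sup_t\mathbb{E}\|X_t\|_{H^{-s}}^2\le C\Bigl(a_0+\sum_{n\ne0}(1+n^2)^{-s}|n|^{-1}\Bigr),
\]
which is finite for every \(s>0\).

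\textbf{Convergence.} An \(L^2\)-bounded martingale in the Hilbert space \(H^{-s}\) converges almost surely and in \(L^2\). One route is the Hilbert-space martingale convergence theorem. A more hands-on route works as follows. Each coordinate \([X_t]_n\) is an \(L^2\)-bounded scalar martingale, so it converges almost surely and in \(L^2\). The tail \(\sum_{|n|>M}(1+n^2)^{-s}\sup_t\bigl|[X_t]_n\bigr|^2\) is controlled in expectation by Doob's \(L^2\) inequality, and this bound tends to zero as \(M\to\infty\). Together these give almost sure and \(L^2\) convergence in \(H^{-s}\).

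The limits for different \(s\) are compatible because of the continuous embeddings between the spaces. We define \(X_*\) as the common limit, which lies in \(\bigcap_{s>0}H^{-s}\).

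\textbf{Gaussianity and covariance.} For a test function \(g\in C^\infty(\mathbb{T})\), the pairing \(\langle X_t,g\rangle\) is centered Gaussian. It converges in \(L^2\) to \(\langle X_*,g\rangle\), so \(X_*\) is a centered Gaussian generalized field, and joint Gaussianity holds over finitely many test functions. The covariance passes to the limit:
\[
\mathbb{E}\bigl[\langle X_*,g\rangle\overline{\langle X_*,g'\rangle}\bigr]=\lim_{t\to\infty}\iint K_t(\theta-\theta')\,g(\theta)\overline{g'(\theta')}\,\mathrm{d}\theta\,\mathrm{d}\theta' .
\]
In Fourier terms this is a series in \([K_t]_n\,[g]_n\overline{[g']_n}\). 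These coefficients converge to \([K_*]_n\) and are dominated by the summable sequence \(C|n|^{-1}|[g]_n[g']_n|\), so dominated convergence identifies the covariance as \(K_*\).

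The only delicate point is measurability and the martingale property in the infinite-dimensional space. Reducing everything to Fourier coordinates, which determine the \(H^{-s}\) Borel structure, handles it. No step appears to be a genuine obstacle.
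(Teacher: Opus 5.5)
Your proof is correct and follows essentially the same route as the paper: the stationarity identity for the Fourier coefficients, the bound \(0\le[K_t]_n\le a_n\le 1/(2|n|)\) giving a uniform \(H^{-s}\) second-moment bound, the Hilbert-space martingale convergence theorem, and passage to the limit in the covariance coefficients. Your coordinatewise Doob-tail route and the explicit Gaussianity and covariance step only add detail that the paper leaves implicit. There is one harmless slip: the double integral gives \(\mathbb{E}\lvert[X_t]_n\rvert^2=[K_t]_n\), not \(\frac{1}{2\pi}[K_t]_n\), because the inner integral \(\int_0^{2\pi}K_t(h)e^{-inh}\,\mathrm{d}h\) equals \(2\pi[K_t]_n\); the discrepancy is absorbed into your constant \(C\) anyway.
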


\begin{proof}
Let
\[
\widehat{X_t}(n)
=
\frac{1}{2\pi}
\int_0^{2\pi}
X_t(\theta)e^{-in\theta}\,\mathrm{d}\theta.
\]
Stationarity implies
\[
\mathbb{E}\left|
\widehat{X_t}(n)
\right|^2
=
[K_t]_n.
\]
Combining with \eqref{eq:hat-k-half-line-integral}, we obtain that for \(n\neq0\), 
\[
0\leq [K_t]_n
\leq
a_n
\leq
\frac{1}{2|n|},
\]
while \(0\leq[K_t]_0\leq a_0\). 
Hence,
\[
\sup_{t\geq0}
\mathbb{E}
\|X_t\|_{H^{-s}(\mathbb{T})}^2
\leq
a_0
+
\sum_{n\neq0}
\frac{1}{(1+n^2)^s}
\frac{1}{2|n|}
<
\infty.
\]
The independent centered scale increments make \((X_t)\) an \(H^{-s}(\mathbb{T})\)-valued martingale,
so the Hilbert-space martingale convergence theorem proves \eqref{eq:scale-field-negative-Sobolev-limit}. 
Passing to the limit in the covariance coefficients identifies the covariance of \(X_*\) with \(K_*\).
\end{proof}

Let \((\xi_n)_{n\geq1}\) and \((\eta_n)_{n\geq1}\) be independent sequences of standard real Gaussian variables, independent of the scale field, and define
\begin{equation}
\label{eq:smooth-Gaussian-correction}
Y(\theta)
=
\sum_{n=1}^{\infty}
\sqrt{2b_n}
\left(
\xi_n\cos(n\theta)
+
\eta_n\sin(n\theta)
\right).
\end{equation}
By Lemma~\ref{lem:smooth-covariance-correction},  the coefficients \(b_n\) decay faster than any power.  
Hence, \(Y\) has an almost surely \(C^\infty\) version, with covariance
\[
\mathbb{E}[Y(\theta)Y(\theta')]
=
B_{\mathrm{sm}}(\theta-\theta').
\]
Set
\begin{equation}
\label{eq:smooth-correction-variance}
\sigma_Y^2
=
\mathbb{E}[Y(\theta)^2]
=
B_{\mathrm{sm}}(0).
\end{equation}
Let \(G_0\) be a centered Gaussian variable with
\[
\mathbb{E}[G_0^2]=a_0,
\]
independent of the exact circle field \(\phi\).

\begin{proposition}
\label{prop:corrected-field-law}
As centered generalized Gaussian fields on \(\mathbb{T}\),
\begin{equation}
\label{eq:corrected-field-law}
X_*+Y
\stackrel{\mathrm{law}}{=}
\phi+G_0.
\end{equation}
\end{proposition}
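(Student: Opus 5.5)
Both sides are centered Gaussian generalized fields on \(\mathbb{T}\), so the plan is to show that they have the same covariance distribution, that is, the same Fourier covariance coefficients. A centered generalized Gaussian field is determined in law by its covariance bilinear form on \(C^\infty(\mathbb{T})\). Equivalently, its law is determined by the joint law of the Fourier modes \(\langle F,e^{-in\cdot}\rangle\), and these are jointly Gaussian with covariances given by the Fourier coefficients of the covariance distribution when the field is stationary. So it suffices to check that the two stationary covariance distributions coincide.

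First I will compute the covariance of the left-hand side. By Lemma~\ref{lem:scale-field-negative-Sobolev-limit}, \(X_*\) is centered Gaussian with covariance \(K_*\). It is stationary, because each \(X_t\) is stationary and stationarity passes to the \(L^2\) limit in \(H^{-s}\). By construction \eqref{eq:smooth-Gaussian-correction}, \(Y\) is independent of the scale field, hence of \(X_*\), which is a measurable function of \((X_t)\). The field \(Y\) is centered Gaussian with covariance \(B_{\mathrm{sm}}\). The pair \((X_*,Y)\) is jointly Gaussian by independence, so \(X_*+Y\) is a centered Gaussian generalized field with covariance \(K_*+B_{\mathrm{sm}}\).

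Next I will compute the covariance of the right-hand side. By independence of \(G_0\) and \(\phi\), the field \(\phi+G_0\) has covariance \(K_{\mathrm{cir}}+a_0\), with \(K_{\mathrm{cir}}\) given by \eqref{eq:exact-circle-covariance-Fourier-series}. Lemma~\ref{lem:smooth-covariance-correction}, identity \eqref{eq:covariance-correction-identity}, gives exactly \(K_*+B_{\mathrm{sm}}=K_{\mathrm{cir}}+a_0\) as distributions. The covariances therefore agree, and equality in law follows.

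The only delicate point is the meaning of equality in law for generalized fields. I would make this precise by testing against \(F\in C^\infty(\mathbb{T})\). The covariance of \(\langle X_*+Y,F\rangle\) and \(\langle X_*+Y,F'\rangle\) equals \(\sum_n 2\pi\,[K_*+B_{\mathrm{sm}}]_n\,\widehat F\,\overline{\widehat{F'}}\) up to normalization. Both series converge absolutely, since the coefficients are \(O(1/|n|)\) and \(\widehat F\) decays rapidly. This determines all finite-dimensional distributions, and hence the law on \(H^{-s}(\mathbb{T})\), whose Borel \(\sigma\)-algebra is generated by the pairings with a countable dense family of test functions.
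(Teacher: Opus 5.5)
Your proof is correct and is essentially the paper's argument: compute the covariance of \(X_*+Y\) as \(K_*+B_{\mathrm{sm}}\), use \eqref{eq:covariance-correction-identity} to identify it with \(K_{\mathrm{cir}}+a_0\), the covariance of \(\phi+G_0\), and invoke that centered generalized Gaussian fields are determined in law by their covariance. Your additional remarks on independence of \(Y\) from \(X_*\), on pairings with test functions, and on the Borel structure of \(H^{-s}(\mathbb{T})\) only spell out details that the paper treats as standard.
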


\begin{proof}
By \eqref{eq:covariance-correction-identity},
\[
\operatorname{Cov}(X_*+Y)
=
K_*+B_{\mathrm{sm}}
=
K_{\mathrm{cir}}+a_0,
\]
which is also the covariance of \(\phi+G_0\).  
Since centered generalized Gaussian fields are determined in law by their covariance forms, \eqref{eq:corrected-field-law} follows.
\end{proof}

The correction \(Y\) has no constant Fourier mode,  while the zero-mode variance \(a_0\) of the limiting star field is represented on the exact-circle side by \(G_0\).

\subsection{Rajchman multipliers and the corrected scale limit}
\label{subsec:rajchman-multipliers-corrected-limit}

We use the following elementary stability property.

\begin{lemma}
\label{lem:rajchman-multiplier}
Let \(\mu\) be a finite Rajchman measure on \(\mathbb{T}\), and let \( g\in L^1(\mu). \)
Then, \(g\mu\) is Rajchman.  
If \(g\) is continuous and strictly positive on \(\mathbb{T}\), then
\[
\mu\text{ is Rajchman}
\quad\Longleftrightarrow\quad
g\mu\text{ is Rajchman}.
\]
\end{lemma}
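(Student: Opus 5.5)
The plan is to approximate \(g\) by trigonometric polynomials in \(L^1(\mu)\) and use that multiplication by \(e^{ik\theta}\) only shifts Fourier coefficients. First, for a trigonometric polynomial \(P(\theta)=\sum_{|k|\le K}c_ke^{ik\theta}\),
\[
\widehat{P\mu}(n)=\sum_{|k|\le K}c_k\,\widehat{\mu}(n+k),
\]
which is a finite combination of shifted sequences each tending to zero as \(|n|\to\infty\). Hence \(P\mu\) is Rajchman.

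Next, for general \(g\in L^1(\mu)\) I would use density. Since \(\mu\) is a finite Borel measure on the compact metric space \(\mathbb{T}\), \(C(\mathbb{T})\) is dense in \(L^1(\mu)\). By Fejér's theorem, or Stone--Weierstrass, trigonometric polynomials are uniformly dense in \(C(\mathbb{T})\). Uniform convergence implies \(L^1(\mu)\) convergence because \(\mu\) is finite. So given \(\varepsilon>0\), choose \(P\) with \(\|g-P\|_{L^1(\mu)}<\varepsilon\). Then
\[
\sup_n\bigl|\widehat{g\mu}(n)-\widehat{P\mu}(n)\bigr|\le\|g-P\|_{L^1(\mu)}<\varepsilon,
\]
so \(\limsup_{|n|\to\infty}|\widehat{g\mu}(n)|\le\varepsilon\). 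Letting \(\varepsilon\downarrow0\) gives the first assertion.

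For the equivalence, the forward direction is the first assertion, since a continuous \(g\) is bounded and hence lies in \(L^1(\mu)\). For the reverse direction, apply the first assertion to the finite measure \(g\mu\) with multiplier \(1/g\). This function is continuous because \(g\) is continuous and strictly positive on the compact set \(\mathbb{T}\), so it is bounded and belongs to \(L^1(g\mu)\). This yields \(\mu=(1/g)(g\mu)\) Rajchman.

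The only step needing care is the density of \(C(\mathbb{T})\) in \(L^1(\mu)\) for an arbitrary, possibly singular, finite Borel measure. This follows from regularity of finite Borel measures on metric spaces together with Urysohn/Lusin approximation of indicator functions of Borel sets. No genuine obstacle is expected.
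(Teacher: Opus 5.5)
Your proposal is correct and follows the paper's proof essentially step for step. You approximate $g$ in $L^1(\mu)$ by a trigonometric polynomial, use the shift identity $\widehat{P\mu}(n)=\sum_k c_k\widehat{\mu}(n+k)$ together with the uniform bound by $\|g-P\|_{L^1(\mu)}$, and obtain the converse by applying the first assertion to $g\mu$ with the bounded continuous multiplier $1/g$. You also spell out why trigonometric polynomials are dense in $L^1(\mu)$ for a possibly singular $\mu$, which the paper leaves implicit.
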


\begin{proof}
Approximate \(g\) in \(L^1(\mu)\) by a trigonometric polynomial
\[
p(\theta)
=
\sum_{|j|\leq J}
c_je^{ij\theta}.
\]
Then,
\[
\widehat{p\mu}(n)
=
\sum_{|j|\leq J}
c_j\widehat{\mu}(n+j)
\longrightarrow0.
\]
The Fourier transform of \((g-p)\mu\) is uniformly bounded by \(\|g-p\|_{L^1(\mu)}\).  
Letting the approximation error tend to zero proves the first assertion.  

For the converse, suppose that \(g\) is continuous and strictly positive and that \(g\mu\) is Rajchman.
Since \(1/g\) is bounded and continuous, applying the first assertion to \(g\mu\) with multiplier \(1/g\) shows that \(\mu\) is Rajchman.
\end{proof}

Put \( H=X_*+Y,~ H_t=X_t+Y. \)
Since \(X_t\) and \(Y\) are independent,
\[
\mathbb{E}[H_t(\theta)^2]
=
t+\sigma_Y^2.
\]
Define the positive critical scale measures
\begin{equation}
\label{eq:corrected-scale-critical-approximations}
\mu_t^H(\mathrm{d}\theta)
=
\sqrt{t}
\exp\left(
\sqrt{2}H_t(\theta)
-
\mathbb{E}[H_t(\theta)^2]
\right)
\mathrm{d}\theta.
\end{equation}
Thus,
\begin{equation}
\label{eq:corrected-scale-factorization}
\mu_t^H
=
e^{\sqrt{2}Y-\sigma_Y^2}
\sqrt{t}\,M_t^{\sqrt{2}}.
\end{equation}

\begin{proposition}
\label{prop:corrected-scale-critical-limit}
As \(t\to\infty\),
\begin{equation}
\label{eq:corrected-scale-critical-limit}
\mu_t^H
\longrightarrow
\nu_H
\end{equation}
in probability in the weak topology, where
\begin{equation}
\label{eq:corrected-scale-limit-measure}
\nu_H
:=
e^{\sqrt{2}Y-\sigma_Y^2}
\sqrt{\frac{2}{\pi}}\,
M_*'.
\end{equation}
The random measure \(\nu_H\) is almost surely finite, atomless, and Rajchman.
\end{proposition}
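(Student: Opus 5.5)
The plan is to use the factorization \eqref{eq:corrected-scale-factorization}: \(\mu_t^H\) is the fixed random continuous density \(g:=e^{\sqrt{2}Y-\sigma_Y^2}\) times \(\sqrt{t}\,M_t^{\sqrt{2}}\), so everything reduces to Proposition~\ref{prop:periodized-seneta-heyde} together with the multiplier Lemma~\ref{lem:rajchman-multiplier}. Note that \(g\) does not depend on \(t\). By the construction in \eqref{eq:smooth-Gaussian-correction}, \(Y\) has an almost surely \(C^\infty\) version, so \(g\) is almost surely continuous and strictly positive on \(\mathbb{T}\). Moreover \(g\) is independent of the scale field.

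\textbf{Step 1 (convergence).} By Proposition~\ref{prop:periodized-seneta-heyde}, \(\sqrt{t}\,M_t^{\sqrt{2}}\to\sqrt{2/\pi}\,M_*'\) in probability in the weak topology. I will deduce convergence in probability of \(\mu_t^H\) from the following deterministic continuity statement. If \(\mu_n\to\mu\) weakly and \(g\in C(\mathbb{T})\), then \(g\mu_n\to g\mu\) weakly, because \(\int f\,\mathrm{d}(g\mu_n)=\int (fg)\,\mathrm{d}\mu_n\) and \(fg\) is continuous.

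To pass to convergence in probability, I use the subsequence characterization: convergence in probability in a metrizable space such as the space of finite measures with a weak metric is equivalent to every subsequence having a further subsequence converging almost surely. Given any sequence \(t_k\to\infty\), extract a further subsequence along which \(\sqrt{t_k}M_{t_k}^{\sqrt 2}\to\sqrt{2/\pi}M_*'\) almost surely. Multiplying by \(g\), which is continuous almost surely, gives almost sure convergence of \(\mu_{t_k}^H\) to \(\nu_H\) along that subsequence. This yields \eqref{eq:corrected-scale-critical-limit}.

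An alternative route is to test against a countable dense family of \(f\) and apply the in-probability convergence to \(fg\). That is awkward because \(fg\) is random, so the subsequence argument is cleaner.

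\textbf{Step 2 (properties of \(\nu_H\)).} Finiteness follows from \(\nu_H(\mathbb{T})\le \sqrt{2/\pi}\,\|g\|_\infty M_*'(\mathbb{T})<\infty\), using Proposition~\ref{prop:global-barrier-derivative-limit}. Atomlessness follows from Corollary~\ref{cor:atomlessness}, since \(\nu_H\ll M_*'\). For the Rajchman property, Theorem~\ref{thm:periodized-derivative-rajchman} shows that \(M_*'\) is almost surely Rajchman. On the intersection of this probability-one event with the event that \(Y\) is smooth, Lemma~\ref{lem:rajchman-multiplier}, applied pathwise with the continuous multiplier \(\sqrt{2/\pi}\,g\in L^1(M_*')\), gives that \(\nu_H\) is Rajchman.

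\textbf{Main obstacle.} There is little genuine difficulty here. The only point requiring care is Step 1, where a random (but \(t\)-independent and continuous) multiplier must be passed through a convergence in probability. The subsequence/almost-sure argument handles this, and it needs nothing about the joint law of \(Y\) and \(X\) beyond the measurability of \(g\) on the common probability space.
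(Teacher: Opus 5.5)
Your proposal is correct and follows essentially the same route as the paper: factor \(\mu_t^H\) as the \(t\)-independent continuous density \(e^{\sqrt{2}Y-\sigma_Y^2}\) times \(\sqrt{t}\,M_t^{\sqrt{2}}\), invoke Proposition~\ref{prop:periodized-seneta-heyde} together with weak continuity of multiplication by a continuous function, and then obtain finiteness, atomlessness, and the Rajchman property of \(\nu_H\) from \(M_*'\) via Corollary~\ref{cor:atomlessness}, Theorem~\ref{thm:periodized-derivative-rajchman}, and Lemma~\ref{lem:rajchman-multiplier}. Your subsequence argument only spells out a step the paper leaves implicit, namely passing the random continuous multiplier through convergence in probability.
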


\begin{proof}
By \eqref{eq:corrected-scale-factorization} and Proposition~\ref{prop:periodized-seneta-heyde},
\eqref{eq:corrected-scale-critical-limit} follows from the continuity of multiplication by the almost surely continuous function \(e^{\sqrt{2}Y-\sigma_Y^2}\).
Finiteness and atomlessness are inherited from \(M_*'\),
while Theorem~\ref{thm:periodized-derivative-rajchman} and Lemma~\ref{lem:rajchman-multiplier} imply that \(\nu_H\) is Rajchman.
\end{proof}

\subsection{Scale and mollifier covariance comparison}
\label{subsec:scale-mollifier-comparison}

Let \( \rho\in C_c^\infty(\mathbb{R}) \) be nonnegative and satisfy
\[
\int_{\mathbb{R}}\rho(x)\,\mathrm{d}x=1.
\]
Define its periodized rescaling by
\begin{equation}
\label{eq:periodized-mollifier}
\rho_\varepsilon(\theta)
=
\sum_{\ell\in\mathbb{Z}}
\frac{1}{\varepsilon}
\rho\left(
\frac{\theta+2\pi\ell}{\varepsilon}
\right).
\end{equation}
All convolutions below are periodic.  
Define
\[
\psi(x)
=
\int_{\mathbb{R}}
\rho(y)\rho(y+x)\,dy.
\]
Then, \(\psi\) is nonnegative, even, smooth, compactly supported, and has integral one.
We use the same notation \(\psi_\varepsilon\) for its periodized rescaling.

For \(\varepsilon=e^{-t}\), the stationary covariance kernels corresponding respectively to \(H_t\) and \(\rho_\varepsilon*H\) are
\[
C_t^A(h)
:=
K_t(h)+B_{\mathrm{sm}}(h)
\qquad
\text{and}
\qquad
C_t^B(h)
:=
\psi_\varepsilon*
\left(
K_{\mathrm{cir}}+a_0
\right)(h).
\]
The common limiting covariance is
\[
C(h)
:=
K_{\mathrm{cir}}(h)+a_0
=
K_*(h)+B_{\mathrm{sm}}(h).
\]

For \(0<\varepsilon<1\) and \(0\leq r\leq\pi\), define
\begin{equation}
\label{eq:truncated-logarithm}
L_\varepsilon(r)
=
\log^+\left(
\frac{1}{r\vee\varepsilon}
\right).
\end{equation}
There is a bounded continuous function \(g\) on \(\mathbb{T}\) such that, for \(h\neq0\),
\begin{equation}
\label{eq:circle-logarithm-plus-bounded-part}
C(h)
=
\log^+\left(
\frac{1}{d_{\mathbb{T}}(h,0)}
\right)
+
g(h).
\end{equation}

\begin{lemma}[Uniform scale-versus-mollifier covariance comparison]
\label{lem:scale-mollifier-covariance-comparison}
There exist \(t_0<\infty\) and \(C_0<\infty\), depending only on \(k\) and \(\rho\), such that
\begin{equation}
\label{eq:uniform-scale-mollifier-comparison}
\sup_{t\geq t_0}
\sup_{h\in\mathbb{T}}
\left|
C_t^A(h)-C_t^B(h)
\right|
\leq C_0.
\end{equation}
Moreover, for every \(\delta>0\),
\begin{equation}
\label{eq:off-diagonal-scale-mollifier-convergence}
\sup_{\substack{h\in\mathbb{T}\\
d_{\mathbb{T}}(h,0)\geq\delta}}
\left|
C_t^A(h)-C_t^B(h)
\right|
\longrightarrow0
\qquad
\text{as }t\to\infty.
\end{equation}
\end{lemma}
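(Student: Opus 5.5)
Both kernels will be compared with the truncated logarithm \(L_\varepsilon(d_{\mathbb{T}}(h,0))\), \(\varepsilon=e^{-t}\), in two separate estimates: \(C_t^A=L_\varepsilon+O(1)\) and \(C_t^B=L_\varepsilon+O(1)\) uniformly in \(h\) for \(t\geq t_0\), which gives \eqref{eq:uniform-scale-mollifier-comparison}. Then each of \(C_t^A\) and \(C_t^B\) will be shown to converge to \(C\) uniformly on \(\{d_{\mathbb{T}}(h,0)\geq\delta\}\), which gives \eqref{eq:off-diagonal-scale-mollifier-convergence}.

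\emph{The scale side.} Since \(B_{\mathrm{sm}}\) is bounded and continuous by Lemma~\ref{lem:smooth-covariance-correction}, it suffices to treat \(K_t\). For \(r=d_{\mathbb{T}}(h,0)\geq1/2\) every \(q_u(h)\) with \(u\geq1\) vanishes because \(\operatorname{supp}k\subset[-1,1]\), so \(K_t(h)=0\). For \(r<1/2\), Lemma~\ref{lem:periodized-positive-definite-local-lifting} gives \(K_t(h)=\int_1^{e^t}k(ur)\,du/u=\int_r^{re^t}k(v)\,dv/v\). I split this integral at \(v=1\) and use \(0\leq k\leq1\), \(k=0\) on \((1,\infty)\), and \(1-k(v)=O(v^2)\). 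This yields \(\int_{r}^{\min(1,re^t)}dv/v+O(1)=\log^+\bigl(1/(r\vee e^{-t})\bigr)+O(1)\) uniformly. For fixed \(r\geq\delta\) and \(e^t\geq1/\delta\), the integral \(K_t(h)\) already equals \(K_*(h)=\int_1^\infty q_u(h)\,du/u\), because \(q_u(h)=0\) once \(u>1/r\). Hence \(C_t^A=C\) exactly off the \(\delta\)-diagonal for large \(t\).

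\emph{The mollifier side.} By \eqref{eq:circle-logarithm-plus-bounded-part}, \(C=\log^+(1/d_{\mathbb{T}}(\cdot,0))+g\) with \(g\) bounded and continuous. Since \(\psi_\varepsilon\) is a probability density, \(\psi_\varepsilon*g\) stays bounded and converges to \(g\) uniformly. For the logarithmic part, \(\psi_\varepsilon\) is supported in an \(O(\varepsilon)\)-neighbourhood of \(0\), say \(|x|\leq c_\rho\varepsilon\). If \(r\geq2c_\rho\varepsilon\), then \(\log(1/|h-x|)=\log(1/r)+O(1)\) on the support. The error there is in fact \(O(\varepsilon/r)\), so the convergence is uniform on \(r\geq\delta\), and smoothness of \(\log^+\) away from \(0\) and near \(r=1\) is harmless. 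If \(r<2c_\rho\varepsilon\), then \(\int\psi_\varepsilon(x)\log(1/|h-x|)\,dx=\log(1/\varepsilon)+\int\psi(y)\log\bigl(1/|h/\varepsilon-y|\bigr)\,dy\). The last term is bounded uniformly because \(\psi\) is bounded with compact support and \(\log|\cdot|\) is locally integrable. Together this gives \(C_t^B=L_\varepsilon(r)+O(1)\) uniformly and \(C_t^B\to C\) uniformly on \(r\geq\delta\).

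The main obstacle is the uniformity in the transition region \(r\asymp\varepsilon\), where neither the pointwise logarithm nor the saturated value \(\log(1/\varepsilon)\) is obviously accurate. I will handle it with the rescaled integral above, bounding \(\sup_{|z|\leq C}\int\psi(y)\bigl|\log|z-y|\bigr|\,dy<\infty\), together with the elementary comparison \(\bigl|\log(1/\max(r,\varepsilon))-\log(1/\max(r,c\varepsilon))\bigr|\leq|\log c|\). The two regimes for \(C_t^B\), and the two regimes \(r\gtrless e^{-t}\) for \(K_t\), then match up to constants depending only on \(k\) and \(\rho\).
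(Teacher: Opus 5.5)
Your proposal is correct and follows essentially the paper's route. Both kernels are compared with the truncated logarithm \(L_{e^{-t}}(d_{\mathbb{T}}(h,0))\): the scale side uses the exact identity \(C_t^A=C\) once \(d_{\mathbb{T}}(h,0)>e^{-t}\), and the mollifier side uses the rescaled logarithmic integral near the diagonal. Your single Lipschitz bound \(O(\varepsilon/r)\) merely replaces the paper's split at a fixed \(r_0\) and its uniform-continuity argument off the diagonal.

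There is one harmless slip. The identity \(K_t(h)=0\) holds only for \(d_{\mathbb{T}}(h,0)\geq1\), not for \(d_{\mathbb{T}}(h,0)\geq1/2\). However, the non-principal periodization terms satisfy \(|h+2\pi\ell|\geq\pi>1\), so the formula \(K_t(h)=\int_r^{re^t}k(v)\,\mathrm{d}v/v\) holds for every \(r\). On \(1/2\leq r<1\) it gives \(0\leq K_t(h)\leq\log 2\), so the conclusion is unaffected.
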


\begin{proof}
Put \( \varepsilon=e^{-t} \) and \( r=d_{\mathbb{T}}(h,0). \)

\medskip

\noindent\emph{The scale covariance.}
If \(r>\varepsilon\), then for every \(u\geq e^t\) and every integer \(\ell\),
\[
u|h+2\pi\ell|
\geq
ur
>
1.
\]
Hence, \(q_u(h)=0\) for every \(u\geq e^t\), and therefore
\begin{equation}
\label{eq:scale-covariance-exact-off-range}
K_t(h)=K_*(h),
\qquad
C_t^A(h)=C(h).
\end{equation}

Suppose that \(0<r\leq\varepsilon\). 
For \(1\leq u\leq\varepsilon^{-1}\), only the principal periodization term contributes and \( q_u(h)=k(ur). \)
Thus,
\[
K_t(h)-t
=
\int_r^{r/\varepsilon}
\frac{k(v)-1}{v}\,\mathrm{d}v.
\]
Because \(k\) is even and smooth with \(k(0)=1\),
\[
K_t(h)=t+O_k(1),
\qquad
0<r\leq\varepsilon.
\]
At \(r=0\), one has exactly \(K_t(0)=t\).

If \(r\leq\varepsilon\), then \( L_\varepsilon(r) = \log(1/\varepsilon) = t, \) and the boundedness of \(B_{\mathrm{sm}}\) gives
\[
C_t^A(h)
=
L_\varepsilon(r)+O_k(1).
\]
If \(r>\varepsilon\), then \eqref{eq:scale-covariance-exact-off-range} and \eqref{eq:circle-logarithm-plus-bounded-part} give
\[
C_t^A(h)
=L_\varepsilon(r)+g(h).
\]
Since \(g\) is bounded, the two regions together yield, after increasing \(t_0\) if necessary,
\begin{equation}
\label{eq:scale-truncated-logarithm-comparison}
\sup_{t\geq t_0}
\sup_{h\in\mathbb{T}}
\left|
C_t^A(h)
-
L_{e^{-t}}\left(
d_{\mathbb{T}}(h,0)
\right)
\right|
\leq C.
\end{equation}

\medskip

\noindent\emph{The mollified covariance.}
Choose \(R\geq1\) such that
\[
\operatorname{supp}\psi\subset[-R,R].
\]
Write
\[
\lambda(h)
=
\log^+\frac{1}{d_{\mathbb{T}}(h,0)},
\qquad h\neq0,
\]
viewed as a locally integrable function on \(\mathbb{T}\).
Since \(\psi_\varepsilon\) is nonnegative and has integral one,
\[
\left|
(\psi_\varepsilon*g)(h)
\right|
\leq
\|g\|_{L^\infty(\mathbb{T})}.
\]
It therefore remains to compare \( \psi_\varepsilon*\lambda \) with \(L_\varepsilon(r)\).

Take \(t_0\) sufficiently large that, for \(t\geq t_0\),
\[
3R\varepsilon<1.
\]
Suppose first that \( r\leq2R\varepsilon. \)
Using a local lift, write \( h=\varepsilon y,~ |y|\leq2R. \)
For \(z\in\operatorname{supp}\psi\), one has
\[
\lambda(h-\varepsilon z)
=
\log\frac{1}{|\varepsilon y-\varepsilon z|}
\]
for almost every \(z\), and therefore
\[
(\psi_\varepsilon*\lambda)(h)
=
\log\frac{1}{\varepsilon}
+
\int_{\mathbb{R}}
\psi(z)
\log\frac{1}{|y-z|}
\,\mathrm{d}z.
\]
Since the last integral is bounded uniformly for \(|y|\leq2R\), we obtain
\[
C_t^B(h)
=
\log\frac{1}{\varepsilon}
+
O_{k,\rho}(1),
\qquad
r\leq2R\varepsilon.
\]
If \(r\leq\varepsilon\), then \( L_\varepsilon(r)=\log(1/\varepsilon) \).
If instead \( \varepsilon<r\leq2R\varepsilon \), then
\[
\left|
\log\frac{1}{\varepsilon}
-
L_\varepsilon(r)
\right|
\leq
\log(2R).
\]
Thus,
\[
C_t^B(h)
=
L_\varepsilon(r)+O_{k,\rho}(1),
\qquad
r\leq2R\varepsilon.
\]

It remains to consider \(r>2R\varepsilon\).
Fix a deterministic \(r_0\in(0,2/3)\), and increase \(t_0\) further so that for \(t\geq t_0\),
\[
R\varepsilon<\frac{r_0}{2}.
\]
First, suppose that \( 2R\varepsilon<r<r_0. \)
For every \(z\in\operatorname{supp}\psi\), 
\[
\frac{r}{2}
\leq
d_{\mathbb{T}}(h-\varepsilon z,0)
\leq
\frac{3r}{2},
\]
which implies that
\[
(\psi_\varepsilon*\lambda)(h)
=
\log\frac{1}{r}
+
O(1).
\]
Here \(\varepsilon<r<1\), so \( L_\varepsilon(r) = \log(1/r). \)
Together with the bounded \(g\)-term, this yields
\[
C_t^B(h)
=
L_\varepsilon(r)+O_{k}(1),
\qquad
2R\varepsilon<r<r_0.
\]

Finally, suppose that \(r\geq r_0\).
For \(z\in\operatorname{supp}\psi\),
\[
d_{\mathbb{T}}(h-\varepsilon z,0)
\geq
\frac{r_0}{2}.
\]
Hence, \(C_t^B(h)\) is uniformly bounded in this region. 
Moreover,
\[
0\leq L_\varepsilon(r)
\leq
\log^+\frac{1}{r_0},
\]
so, for \(r\geq r_0\),
\[
C_t^B(h)
=
L_\varepsilon(r)+O_{k}(1).
\]

Combining the three regions gives
\begin{equation}
\label{eq:mollifier-truncated-logarithm-comparison}
\sup_{t\geq t_0}
\sup_{h\in\mathbb{T}}
\left|
C_t^B(h)
-
L_{e^{-t}}\left(
d_{\mathbb{T}}(h,0)
\right)
\right|
\leq C.
\end{equation}
Equations \eqref{eq:scale-truncated-logarithm-comparison} and \eqref{eq:mollifier-truncated-logarithm-comparison} prove \eqref{eq:uniform-scale-mollifier-comparison}.

\medskip

\noindent\emph{Convergence away from the diagonal.}
Fix \(\delta>0\).
If \(t\) is sufficiently large that \( e^{-t}<\delta, \) then \eqref{eq:scale-covariance-exact-off-range} gives
\[
C_t^A(h)=C(h)
\qquad
\text{whenever }
d_{\mathbb{T}}(h,0)\geq\delta.
\]
If also \( Re^{-t}<\delta/2, \) then for every \(z\in\operatorname{supp}\psi\) and every such \(h\),
\[
d_{\mathbb{T}}(h-e^{-t}z,0)
\geq
\frac{\delta}{2}.
\]
Thus, the function \(C\) is uniformly continuous there.  
Hence, the approximate-identity property gives
\[
\sup_{\substack{h\in\mathbb{T}\\
d_{\mathbb{T}}(h,0)\geq\delta}}
|C_t^B(h)-C(h)|
\longrightarrow0,
\]
which proves \eqref{eq:off-diagonal-scale-mollifier-convergence}.
\end{proof}

The zero Fourier coefficients are
\begin{equation}
\label{eq:two-approximation-zero-modes}
[C_t^A]_0
=
a_0(1-e^{-t}),
\qquad
[C_t^B]_0=a_0.
\end{equation}
Thus, the zero-mode discrepancy is exactly \(-a_0e^{-t}\).  
It is already included in Lemma~\ref{lem:scale-mollifier-covariance-comparison};  no additional renormalization is needed.

\subsection{Critical uniqueness}
\label{subsec:critical-uniqueness}

Let \( \varepsilon_j\downarrow0 \) be an arbitrary strictly decreasing deterministic sequence, and put \( t_j = \log(1/\varepsilon_j). \)
After deleting finitely many terms, assume \(t_j\geq t_0\). 
Define
\[
U_j
=
\sqrt{2}\,H_{t_j}
\qquad
\text{and}
\qquad
V_j
=
\sqrt{2}\,
\rho_{\varepsilon_j}*H.
\]
Use the common deterministic reference measures
\begin{equation}
\label{eq:common-critical-reference-measures}
\varrho_j(\mathrm{d}\theta)
:=
\sqrt{t_j}\,\mathrm{d}\theta
=
\sqrt{\log(1/\varepsilon_j)}\,\mathrm{d}\theta.
\end{equation}
The scale chaos measures are
\begin{equation}
\label{eq:scale-chaos-for-uniqueness}
\widetilde{\mu}_j(\mathrm{d}\theta)
:=
\exp\left(
U_j(\theta)
-
\frac{1}{2}
\mathbb{E}[U_j(\theta)^2]
\right)
\varrho_j(\mathrm{d}\theta)
=
\mu_{t_j}^H
(\mathrm{d}\theta).
\end{equation}
Proposition~\ref{prop:corrected-scale-critical-limit} yields \(\widetilde{\mu}_j\to\nu_H\) in probability,  and \(\nu_H\) is almost surely atomless.
The mollifier measures are
\begin{equation}
\label{eq:mollifier-chaos-for-uniqueness}
\begin{aligned}
\mu_j(\mathrm{d}\theta)
&:=
\exp\left(
V_j(\theta)
-
\frac{1}{2}
\mathbb{E}[V_j(\theta)^2]
\right)
\varrho_j(\mathrm{d}\theta)
\\
&=
\sqrt{\log(1/\varepsilon_j)}
\exp\left(
\sqrt{2}
\left(
\rho_{\varepsilon_j}*H
\right)(\theta)
-
\mathbb{E}\left[
\left(
\rho_{\varepsilon_j}*H
\right)(\theta)^2
\right]
\right)
\mathrm{d}\theta.
\end{aligned}
\end{equation}

The covariance kernels of \(U_j\) and \(V_j\) are \(2C_{t_j}^A\) and \(2C_{t_j}^B\), respectively.  
Therefore, Lemma~\ref{lem:scale-mollifier-covariance-comparison} gives
\[
\sup_j
\sup_{\theta,\theta'\in\mathbb{T}}
\left|
\operatorname{Cov}
\left(
U_j(\theta),U_j(\theta')
\right)
-
\operatorname{Cov}
\left(
V_j(\theta),V_j(\theta')
\right)
\right|
<
\infty,
\]
and, for every \(\delta>0\),
\[
\sup_{\substack{\theta,\theta'\in\mathbb{T}\\
d_{\mathbb{T}}(\theta,\theta')\geq\delta}}
\left|
\operatorname{Cov}
\left(
U_j(\theta),U_j(\theta')
\right)
-
\operatorname{Cov}
\left(
V_j(\theta),V_j(\theta')
\right)
\right|
\longrightarrow0.
\]

The circle is a compact doubling metric space, both field sequences are spatially smooth, the reference measures in \eqref{eq:common-critical-reference-measures} are identical,
and the scale limit is atomless.  
Thus, \cite[Theorem~1.1]{JunnilaSaksman2017} applies and yields
\begin{equation}
\label{eq:mollifier-chaos-distributional-convergence}
\mu_j
\longrightarrow
\nu_H
\qquad
\text{in distribution}.
\end{equation}

The probability-level identification requires the linear-regularization theorem.

\begin{lemma}
\label{lem:periodic-convolution-linear-regularization}
For the sequence \((U_j)_{j\geq1}\), define
\[
R_jf
=
\rho_{\varepsilon_j}*f.
\]
Then, \((R_j)_{j\ge 1}\) is a linear regularization process in the sense of \cite[Definition~4.3]{JunnilaSaksman2017}. 
Moreover,
\[
R_j\left(
\sqrt{2}H
\right)
=
V_j.
\]
\end{lemma}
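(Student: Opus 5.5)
The plan is to check the defining conditions of \cite[Definition~4.3]{JunnilaSaksman2017} one at a time for the periodic convolution operators \(R_jf=\rho_{\varepsilon_j}*f\). All the work reduces to two facts about \(\rho_{\varepsilon_j}\): it is a fixed smooth kernel for each \(j\), and it is an approximate identity as \(j\to\infty\). I recall the definition as requiring, roughly: each \(R_j\) is a linear (hence measurable) map from a space containing the fields \(U_k\) and their limit into \(C(\mathbb{T})\); \(R_j\) is continuous along the convergence \(U_k\to\sqrt{2}H\); and \(R_jf\to f\) suitably for regular \(f\). If the precise formulation contains an additional covariance-comparison condition, I would supply it from Lemma~\ref{lem:scale-mollifier-covariance-comparison}.

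\textbf{Domain, linearity and continuity.} I will let \(R_j\) act on \(H^{-s}(\mathbb{T})\) for a fixed \(s>0\) through the pairing
\[
(R_jf)(\theta)=\langle f,\rho_{\varepsilon_j}(\theta-\cdot)\rangle .
\]
This is well defined because \(\rho_{\varepsilon_j}\in C^\infty(\mathbb{T})\). In Fourier variables, \([R_jf]_n=2\pi[\rho_{\varepsilon_j}]_n[f]_n\), and \([\rho_{\varepsilon_j}]_n\) decays faster than any power of \(|n|\). Hence \(R_j\) is a bounded linear operator from \(H^{-s}(\mathbb{T})\) into \(C^m(\mathbb{T})\) for every \(m\). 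In particular it is Borel measurable, and its values are smooth, so they can be exponentiated pointwise. On continuous functions, \(R_j\) is ordinary periodic convolution. Because \(\rho\geq0\) has unit integral, \(\|R_jf-f\|_\infty\to0\) for every \(f\in C(\mathbb{T})\), which is the approximate-identity requirement.

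\textbf{Compatibility with the sequence \((U_j)\).} By Lemma~\ref{lem:scale-field-negative-Sobolev-limit}, \(X_t\to X_*\) almost surely and in \(L^2\) in \(H^{-s}\). Since \(Y\) is smooth, it follows that \(U_j=\sqrt{2}(X_{t_j}+Y)\to\sqrt{2}H\) in \(H^{-s}\), almost surely and in \(L^2\). Boundedness of \(R_j\) on \(H^{-s}\) then gives
\[
R_jU_k\longrightarrow R_j(\sqrt{2}H)\quad\text{in }C(\mathbb{T})\ \text{as }k\to\infty,\ \text{for each fixed }j.
\]
This is the continuity requirement. The identity \(R_j(\sqrt{2}H)=V_j\) holds by the definition of \(V_j\) as \(\sqrt{2}\,\rho_{\varepsilon_j}*H\), read through the same pairing, together with linearity. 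Its covariance is \(2\psi_{\varepsilon_j}*(K_{\mathrm{cir}}+a_0)=2C_{t_j}^B\), consistent with Proposition~\ref{prop:corrected-field-law}.

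\textbf{Main obstacle.} The difficulty is matching the exact form of Definition~4.3. If it requires \(R_j\) to be defined on the whole space where the field lives, with uniform bounds in \(j\), or a condition comparing \(R_jU_j\) with \(U_j\), I would handle this as follows. First, I would use the Fourier multiplier bound \(|2\pi[\rho_{\varepsilon_j}]_n|\leq1\), which makes \(\|R_j\|_{H^{-s}\to H^{-s}}\leq1\) uniformly in \(j\). Second, I would control the covariance of \(R_jU_j\), namely \(2\psi_{\varepsilon_j}*C^A_{t_j}\), by the truncated-logarithm comparisons \eqref{eq:scale-truncated-logarithm-comparison} and \eqref{eq:mollifier-truncated-logarithm-comparison}. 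These show it stays within \(O(1)\) of \(2L_{\varepsilon_j}\) and converges off the diagonal exactly as in Lemma~\ref{lem:scale-mollifier-covariance-comparison}.
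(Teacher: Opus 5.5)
Your proposal is correct and follows essentially the same route as the paper. The paper also checks two things: the approximate-identity property $\|R_jf-f\|_{L^\infty(\mathbb{T})}\to0$ for (H\"older-)continuous $f$, and, for fixed $j$, that convolution by the smooth kernel $\rho_{\varepsilon_j}$ maps $H^{-s}(\mathbb{T})$ continuously into smooth functions, which combined with the almost-sure $H^{-s}$ convergence of Lemma~\ref{lem:scale-field-negative-Sobolev-limit} gives $R_jU_m\to V_j$ as $m\to\infty$. Your Fourier-multiplier bound makes explicit the $H^{-s}\to C^m$ continuity that the paper only asserts, and your contingency remarks about further conditions in Definition~4.3 are not needed.
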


\begin{proof}
For every Hölder-continuous function \(f\) on \(\mathbb{T}\),
\[
\|R_jf-f\|_{L^\infty(\mathbb{T})}
\longrightarrow0.
\]
This is the approximate-identity property.

Fix \(j\) and \(s>0\).  
By Lemma~\ref{lem:scale-field-negative-Sobolev-limit},
\[
H_{t_m}
\longrightarrow
H
\]
almost surely in \(H^{-s}(\mathbb{T})\). 
Convolution by the fixed smooth kernel \(\rho_{\varepsilon_j}\) maps \(H^{-s}(\mathbb{T})\) continuously into \(C^\infty(\mathbb{T})\).
Therefore,
\[
R_jU_m
\longrightarrow
\sqrt{2}\,
\rho_{\varepsilon_j}*H
=
V_j
\]
almost surely in \(C^\infty(\mathbb{T})\). 
This is the second condition in the definition of a linear regularization process.
\end{proof}

The scale increments are
\[
U_{j+1}-U_j
=
\sqrt{2}
\left(
X_{t_{j+1}}-X_{t_j}
\right),
\]
and are mutually independent.  
The remaining hypotheses of \cite[Theorem~4.4]{JunnilaSaksman2017} follow from the convergence \(\widetilde{\mu}_j\to\nu_H\) in probability,
Lemma~\ref{lem:periodic-convolution-linear-regularization}, and \eqref{eq:mollifier-chaos-distributional-convergence}.
Therefore,
\begin{equation}
\label{eq:mollifier-chaos-probability-convergence}
\mu_j
\longrightarrow
\nu_H
\qquad
\text{in probability}.
\end{equation}

To pass from the sequential convergence above to the full mollifier family,  for \(0<\varepsilon<1\), define
\[
\mu_{\varepsilon}^{H,\rho}(\mathrm{d}\theta)
=
\sqrt{\log(1/\varepsilon)}
\exp\left(
\sqrt{2}(\rho_\varepsilon*H)(\theta)
-
\mathbb{E}\left[
(\rho_\varepsilon*H)(\theta)^2
\right]
\right)
\mathrm{d}\theta.
\]
Thus, for the sequence fixed above, \( \mu_j=\mu_{\varepsilon_j}^{H,\rho}. \)

\begin{proposition}[Canonical critical chaos of the corrected field]
\label{prop:canonical-corrected-chaos}
For every nonnegative \(\rho\in C_c^\infty(\mathbb{R})\) of integral one,
\begin{equation}
\label{eq:full-corrected-mollifier-convergence}
\sqrt{\log(1/\varepsilon)}
\exp\left(
\sqrt{2}
\left(
\rho_\varepsilon*H
\right)(\theta)
-
\mathbb{E}\left[
\left(
\rho_\varepsilon*H
\right)(\theta)^2
\right]
\right)
\mathrm{d}\theta
\longrightarrow
e^{\sqrt{2}Y-\sigma_Y^2}
\sqrt{\frac{2}{\pi}}\,
M_*'
\end{equation}
in probability in the weak topology as \(\varepsilon\downarrow0\).
The limit is independent of the mollifier.
Denoting this common limit by \(M_H^{\mathrm{crit}}\), one has
\begin{equation}
\label{eq:canonical-corrected-chaos}
M_H^{\mathrm{crit}}
=
e^{\sqrt{2}Y-\sigma_Y^2}
\sqrt{\frac{2}{\pi}}\,
M_*'
\end{equation}
on the original coupling.
\end{proposition}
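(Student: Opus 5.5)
The key input is the sequential convergence \eqref{eq:mollifier-chaos-probability-convergence}. It holds for an \emph{arbitrary} strictly decreasing deterministic sequence \(\varepsilon_j\downarrow0\) and an arbitrary admissible mollifier \(\rho\). The limit \(\nu_H\) of \eqref{eq:corrected-scale-limit-measure} does not depend on the sequence or on \(\rho\): it is built from the scale field, \(Y\) and \(M_*'\) alone. The plan is therefore a subsequence argument that upgrades sequential convergence in probability to convergence along the continuous parameter \(\varepsilon\downarrow0\).

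Fix \(\rho\). Equip the space of finite positive measures on \(\mathbb{T}\) with a metric \(d_{\mathrm{w}}\) inducing the weak topology. One concrete choice is
\[
d_{\mathrm{w}}(\mu,\nu)=\sum_{i}2^{-i}\min\bigl(1,|\mu(f_i)-\nu(f_i)|\bigr)
\]
for a countable uniformly dense family \((f_i)\) in \(C(\mathbb{T})\) containing \(1\). Convergence in probability in the weak topology then means \(\mathbb{E}[d_{\mathrm{w}}(\mu_\varepsilon^{H,\rho},\nu_H)\wedge1]\to0\).

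Suppose this fails as \(\varepsilon\downarrow0\). Then there exist \(\eta>0\) and some sequence \(\varepsilon_j\downarrow0\) along which the expectation stays at least \(\eta\). Passing to a subsequence, we may assume the sequence is strictly decreasing and that \(\log(1/\varepsilon_j)\geq t_0\).

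Applying the argument leading to \eqref{eq:mollifier-chaos-probability-convergence} to this sequence gives \(\mu_{\varepsilon_j}^{H,\rho}\to\nu_H\) in probability. That argument uses Lemma~\ref{lem:scale-mollifier-covariance-comparison}, \cite[Theorem~1.1]{JunnilaSaksman2017}, Lemma~\ref{lem:periodic-convolution-linear-regularization} and \cite[Theorem~4.4]{JunnilaSaksman2017}, all with \(U_j=\sqrt2 H_{t_j}\) on the same coupling. Convergence in probability along \(\varepsilon_j\) contradicts the lower bound \(\eta\), which proves \eqref{eq:full-corrected-mollifier-convergence}.

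The step to check carefully is that every hypothesis used earlier was verified for an arbitrary sequence and not a specially chosen one. This covers the uniform covariance bounds of Lemma~\ref{lem:scale-mollifier-covariance-comparison}, which are uniform in \(t\geq t_0\). It also covers the scale convergence \(\widetilde\mu_j=\mu^H_{t_j}\to\nu_H\), which follows from Proposition~\ref{prop:corrected-scale-critical-limit} along any \(t_j\to\infty\). Finally, it covers the independence of the increments \(X_{t_{j+1}}-X_{t_j}\), which holds for any increasing times.

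Mollifier independence follows immediately, since the limit \(\nu_H\) is the same for every \(\rho\). Defining \(M_H^{\mathrm{crit}}\) as this common limit gives \eqref{eq:canonical-corrected-chaos}. The equality holds almost surely on the original coupling because limits in probability are almost surely unique.
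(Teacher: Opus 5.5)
Your proposal is correct and is essentially the paper's argument. The paper likewise uses that \eqref{eq:mollifier-chaos-probability-convergence} holds along every strictly decreasing deterministic sequence \(\varepsilon_j\downarrow0\) with the same limit \(\nu_H\); it derives a contradiction from a sequence along which \(\mathbb{P}\bigl(d_{\mathrm{w}}(\mu_{\varepsilon_j}^{H,\rho},\nu_H)>\eta\bigr)\geq c\), which is equivalent to your \(\mathbb{E}[d_{\mathrm{w}}\wedge1]\) formulation, and it gets mollifier independence because \(\nu_H\) is built without reference to \(\rho\), with your explicit check of the hypotheses for an arbitrary sequence simply spelling out what the paper asserts in one line.
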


\begin{proof}
The argument above applies to every strictly decreasing deterministic sequence \(\varepsilon_j\downarrow0\)
and gives convergence in probability along that sequence to the same random measure \(\nu_H\).

If the full convergence failed, there would exist a metric \(d_{\mathrm{w}}\) generating the weak topology, constants \(\eta,c>0\),
and a sequence \(\varepsilon_j\downarrow0\) such that
\[
\mathbb{P}\left(
d_{\mathrm{w}}
\left(
\mu_{\varepsilon_j}^{H,\rho},
\nu_H
\right)
>\eta
\right)
\geq c
\]
for every \(j\). 
This contradicts \eqref{eq:mollifier-chaos-probability-convergence}.  
Thus, \eqref{eq:full-corrected-mollifier-convergence} holds.

The comparison applies to every admissible mollifier and always produces the same random measure \(\nu_H\).
Hence, this common mollifier-independent limit is \(M_H^{\mathrm{crit}}=\nu_H\).
Combining this identity with \eqref{eq:corrected-scale-limit-measure} gives \eqref{eq:canonical-corrected-chaos}.
\end{proof}

By Theorem~\ref{thm:periodized-derivative-rajchman} and Lemma~\ref{lem:rajchman-multiplier},
the canonical critical measure \(M_H^{\mathrm{crit}}\) is almost surely Rajchman.

\subsection{Proof of the main theorem}
\label{subsec:proof-main-theorem}

\begin{proof}[Proof of Theorem~\ref{thm:main-circle-rajchman}]
By Proposition~\ref{prop:corrected-field-law}, for every fixed mollifier and every \(\varepsilon>0\),
\[
\rho_\varepsilon*H
\stackrel{\mathrm{law}}{=}
\phi_\varepsilon+G_0.
\]
The corresponding finite-scale critical measures therefore have the same law.
Passing to their probability limits gives
\begin{equation}
\label{eq:critical-chaos-equality-in-law}
M_H^{\mathrm{crit}}
\stackrel{\mathrm{law}}{=}
M_{\phi+G_0}^{\mathrm{crit}}.
\end{equation}

The set of Rajchman measures is Borel in the weak topology on finite positive measures.
Indeed, it is
\[
\bigcap_{q=1}^{\infty}
\bigcup_{N=1}^{\infty}
\bigcap_{\substack{n\in\mathbb{Z}\\|n|\geq N}}
\left\{
\mu:
\left|
\int_{\mathbb{T}}
e^{in\theta}\,\mu(\mathrm{d}\theta)
\right|
<
\frac{1}{q}
\right\},
\]
and every fixed Fourier-coordinate map is weakly continuous.
By \eqref{eq:canonical-corrected-chaos}, Theorem~\ref{thm:periodized-derivative-rajchman} and Lemma~\ref{lem:rajchman-multiplier}, 
\(M_H^{\mathrm{crit}}\) is almost surely Rajchman.
Hence, \eqref{eq:critical-chaos-equality-in-law} implies that \(M_{\phi+G_0}^{\mathrm{crit}}\) is almost surely Rajchman.

Since \(G_0\) is spatially constant, independent of \(\phi\), and has variance \(a_0\), for every \(\varepsilon>0\),
\[
\begin{aligned}
&\sqrt{\log(1/\varepsilon)}
\exp\left(
\sqrt{2}\bigl(\phi_\varepsilon(\theta)+G_0\bigr)
-
\mathbb{E}\bigl[
\bigl(\phi_\varepsilon(\theta)+G_0\bigr)^2
\bigr]
\right)\mathrm{d}\theta
\\
&\quad=\ 
e^{\sqrt{2}G_0-a_0}
\sqrt{\log(1/\varepsilon)}
\exp\left(
\sqrt{2}\phi_\varepsilon(\theta)
-
\mathbb{E}\bigl[\phi_\varepsilon(\theta)^2\bigr]
\right)\mathrm{d}\theta.
\end{aligned}
\]
Passing to the probability limit and using Definition~\ref{def:critical-circle-chaos} gives
\begin{equation}
\label{eq:constant-mode-critical-chaos}
M_{\phi+G_0}^{\mathrm{crit}}
=
e^{\sqrt{2}G_0-a_0}
M_\phi^{\mathrm{crit}}
\end{equation}
on the original coupling.
The multiplying scalar is almost surely finite and strictly positive. 
Hence,  \(M_\phi^{\mathrm{crit}}\)  is almost surely Rajchman, and therefore
\[
\widehat{M_\phi^{\mathrm{crit}}}(n)
\longrightarrow0
\qquad
\text{almost surely as }|n|\to\infty.
\]
This proves Theorem~\ref{thm:main-circle-rajchman}.
\end{proof}

\subsection*{Funding}

X. F. is partially supported by the National Science and Technology Council, Taiwan, grant no.~114-2115-M-A49-003-MY3.

\end{document}